\title[Eight points in projective space]
  {The geometry of eight points in projective space:  Representation theory, Lie theory, dualities}
\author[Benjamin Howard, John Millson, Andrew Snowden and Ravi Vakil]
{Benjamin Howard, John Millson, Andrew Snowden and Ravi Vakil*}
\thanks{*B.~Howard was partially supported by NSF grants DMS-0405606 and
DMS-0703674.  J.~Millson was partially supported by the Simons Foundation
and NSF grants DMS-0405606 and DMS-05544254.  A.~Snowden was partially
supported by NSF fellowship DMS-0902661.  R.~Vakil was partially supported
by NSF grant DMS-0801196.}
\date{March 27, 2011.}
\newcommand{\cut}[1]{}
\newtheorem{theorem}{Theorem}[section]
\newtheorem{lemma}[theorem]{Lemma}
\newtheorem{corollary}[theorem]{Corollary}
\newtheorem{proposition}[theorem]{Proposition}
\newtheorem{problem}[theorem]{Problem}
\newtheorem{conj}[theorem]{Conjecture}
\theoremstyle{definition}
\theoremstyle{remark}
\newtheorem{remark}[theorem]{\it Remark}
\DeclareMathOperator{\Sing}{Sing}
\DeclareMathOperator{\Proj}{Proj}
\DeclareMathOperator{\Sym}{Sym}
\DeclareMathOperator{\bw}{{\bigwedge}^{\! 2}}
\DeclareMathOperator{\Hom}{Hom}
\DeclareMathOperator{\Aut}{Aut}
\DeclareMathOperator{\End}{End}
\DeclareMathOperator{\sgn}{sgn}
\DeclareMathOperator{\Tor}{Tor}
\DeclareMathOperator{\Spec}{Spec}
\DeclareMathOperator{\Pic}{Pic}
\DeclareMathOperator{\Sec}{Sec}
\DeclareMathOperator{\Cone}{Cone}
\DeclareMathOperator{\codim}{codim}
\DeclareMathOperator{\Ext}{Ext}
\def\GL{\mathrm{GL}}
\def\SL{\mathrm{SL}}
\def\SS{\mathfrak{S}}
\def\fieldk{\mathbf{k}}
\def\Z{\mathbb{Z}}
\def\P{\mathbb{P}}
\def\proj{\mathbb{P}}
\def\C{\mathbb{C}}
\def\Q{\mathbb{Q}}
\def\F{\mathbb{F}}
\def\wt{\mathbf{wt}}
\def\cq{/\!/}
\def\cubic{\mathcal{C}}
\def\oh{\mathcal{O}}
\def\quintic{\mathcal{Q}}
\def\septic{\mathcal{S}}
\def\hessian{\mathcal{H}}
\let\mf\mathfrak
\let\mc\mathcal
\let\wt\widetilde
\let\ol\overline
\let\bs\backslash
\begin{document}


\begin{abstract}
This paper deals with the geometry of the space (GIT quotient) $M_8$ of $8$
  points in $\proj^1$, and the Gale-quotient $N'_8$ of the GIT quotient
  of $8$ points in $\proj^3$.     

  The space $M_8$ comes with a natural embedding in $\proj^{13}$, or
  more precisely, the projectivization of the $\SS_8$-representation
  $V_{4,4}$.  There is a single $\SS_8$-skew cubic $\cubic$ in
  $\proj^{13}$.  The fact that $M_8$ lies on the skew cubic $\cubic$ is a consequence
  of Thomae's formula for hyperelliptic curves, but more is true:
  $M_8$ is the singular locus of $\cubic$.    These
  constructions yield the free resolution of $M_8$, and are used in
  the determination of the ``single'' equation cutting out the GIT
  quotient of $n$ points in $\proj^1$ in general \cite{hmsv2}.

The space $N'_8$ comes with a natural embedding in $\proj^{13}$,  or
more precisely, $\proj V_{2,2,2,2}$.  There is a single
skew quintic $\quintic$ containing $N'_8$, and $N'_8$ is the singular
locus of the skew quintic $\quintic$.

The skew cubic $\cubic$ and skew quintic $\quintic$ are projectively
dual.  (In particular, they are surprisingly singular, in the sense of
having a dual of remarkably low degree.)  The divisor on the skew
cubic blown down by the dual map is the secant variety $\Sec(M_8)$,
and the contraction $\Sec(M_8) \dashrightarrow N'_8$ factors through
$N_8$ via the space of $8$ points on a quadric surface.  We conjecture
(Conjecture~\ref{conj}) that the divisor on the skew quintic blown
down by the dual map is the quadrisecant variety of $N'_8$ (the
closure of the union of quadrisecant {\em lines}), and that the
quintic $\quintic$ is the trisecant variety.  The resulting picture
extends the classical duality in the 6-point case between the Segre
cubic threefold and the Igusa quartic threefold.

We note
  that there are a number of geometrically natural varieties that are
  (related to) the singular loci of remarkably singular cubic
  hypersurfaces, e.g.  \cite{ch}, \cite{beauville}, etc.
\end{abstract}

\maketitle

\tableofcontents

\section{Introduction}

This note discusses the  geometry of the spaces
\begin{displaymath}
M_8 := (\proj^1)^8 \!\cq \SL(2), \quad  N_8 := (\proj^3)^8 \!\cq \SL(4) \quad \text{and}
\quad (\proj^5)^8 \!\cq \SL(6),
\end{displaymath}
each the GIT quotient of $8$ points in projective space with respect to the ``usual'' linearization
$\mc{O}(1,\dots,1)$.  For each of these quotients $Q$, let $R_{\bullet}(Q)$  be the corresponding (graded) ring of
invariants.  (Coble\nobreakdash\textendash)Gale duality gives a canonical isomorphism between the first and third, via
a canonical isomorphism of the graded rings of invariants.  Gale duality gives an involution on $N_8$, through an
involution of its underling graded ring $R_{\bullet}(N_8)$.   (An explicit description of Gale duality in terms of
tableaux due to \cite{hm} is given in the proof of Proposition~\ref{anotherbrowncow}.)  Our goal is to study and relate
$M_8$ and $N_8$ (and the Gale-quotient $N'_8$ of $N_8$) and their extrinsic geometry.  The key constructions are dual
hypersurfaces $\cubic$ and $\quintic$ in $\proj^{13}$ of degrees three and five respectively; for example, $M_8 =
\Sing(\cubic)$ and $N'_8 = \Sing(\quintic)$ (\S \ref{s:singQ}).  The partial derivatives of $\cubic$ (we sloppily 
identify hypersurfaces and their underlying equations), which cut out $M_8$, will be referred to as ``the $14$
quadratic relations''; they span an irreducible $\SS_8$-representation (of type $2+2+2+2$,
see Proposition~\ref{p:repfacts}),
and up to symmetry there is only one quadratic relation (given in appropriate coordinates by a simple binomial relation
\eqref{e:simplebinomial}).


In \cite{hmsv2}, we give {\em all} relations among generators of the graded rings for $(\proj^1)^n \cq \SL(2)$, with
{\em any} linearization.  In each case the graded rings are generated in one degree, so the quotients come with a
natural projective embedding.  The general case reduces to the linearizations $1^n$, with $n$ even.  In this $1^n$ case,
with the single exception of $n=6$, there is (up to $\SS_n$-symmetry) a single quadratic equation, which is binomial in
the Kempe generators (Specht polynomials).  The quadratic for the case $n \geq 8$ is pulled back from the (unique up to
symmetry) $n=8$ quadratic discussed below, which forms the base case of an induction.  We  indicate in \S
\ref{s:relationtosix} how the only case of smaller $n$ with interesting geometry ($n=6$, related to Gale duality, and
projective duality of the Segre cubic and the Igusa quartic) is also visible in  the boundary of the structure we
describe here.  Thus various beautiful structures of GIT quotients of $n$ points on $\proj^1$ are all consequences of
the  geometry in the $8$-point space $M_8$ discussed in this paper. 

The main results are outlined in \S \ref{mainconstructions}, and come in three logically independent parts.  The first
deals with the relationship between $M_8$ and $N_8$.  The second deals solely with $M_8$, and the third with $N_8$.
\begin{enumerate}
\item[(A)] In \S \ref{s:dualities}, we describe the intricate relationship between $M_8$ and $N_8$, 
summarized in Figure~\ref{f:figureeight}.  We note that this section does not use the fact that the ideal cutting
out $M_8$ is {\em generated} by the 14 quadratic relations (established by pure thought in (B)), only that it lies in the
{\em intersection} of the 14 quadratic relations (\S \ref{s:threeone}).  We also do not use that $N'_8$ is the Gale-quotient
of $N_8$ (established in (C)), only that $N'_8$ corresponds to the subring of the ring of invariants of $(\proj^3)^8
\cq \SL(4)$ generated in degree $1$.  (For this reason we take this as
our initial definition of $N'_8$.)
\item[(B)] In \S \ref{s:syzygy1}--\ref{s:syzygy3}, we give a Lie-theoretic proof of the fact
that there are no linear syzygies among the 14 quadratic relations (Theorem~\ref{prop:nosyz}).  This (in combination
with results of \cite{hmsv2}) gives a pure thought proof that the 14 quadratic relations generate the ideal of $M_8$,
the base case of the main induction of \cite{hmsv2} (Corollary~\ref{prop:i2-i3}).  This was known earlier via computer
calculation by a number of authors (Maclagan, private communication; Koike \cite{koike}; and Freitag and Salvati-Manni
\cite{fs}), but we wished to show the structural reasons for this result in order to make the main theorem of
\cite{hmsv2} (giving all relations for all GIT quotients of $(\proj^1)^n\cq \SL(2)$) computer-independent. (Strictly
speaking, in \cite{hmsv2}, computers were used to deal with the character theory of small-dimensional
$\SS_6$-representations, but this could certainly be done by hand with some effort.)  In \S \ref{s:freeresolution}, we
use the absence of linear syzygies to determine the graded free resolution of (the ring of invariants of) $M_8$.
\item[(C)] In the short concluding section \S \ref{s:N8}, we verify
  with the aid of a computer that the subring $R_{\bullet}(N'_8)$ of
  $R_{\bullet}(N_8)$ generated in degree $1$ is indeed the ring of
  Gale invariants, and that the skew quintic is the {\em only} skew quintic
  relation in both $N'_8$ and $N_8$.  This is done by verifying that
  $R_{\bullet}(N_8)$ is generated in degrees one and two, and
  determining the actions of $\SS_8$ and Gale duality on these
  generating sets.
\end{enumerate}
To be clear on the use of computer calculation (as opposed to pure thought): in \S \ref{s:dualities}, we use a computer
only to intersect two curves in $\proj^2$; in \S \ref{s:syzygy1}--\ref{s:freeresolution}, computers are not used; and
computer calculation is central to \S \ref{s:N8}.

We describe other manifestations of the ring of invariants of $M_8$ in \S \ref{othermanifestations}.  Miscellaneous
algebraic results about $M_8$ that may be useful to others are given in \S \ref{s:miscalgebra}.  We sketch how the
beautiful classical geometry of the six point case is visible at the boundary in \S \ref{s:relationtosix}.  The
justifications of the statements made in \S \ref{mainconstructions} are given in the rest of the paper.

\subsection{Notation}
{\em In general, we work over a  field $\fieldk$ of characteristic $0$.}  Most statements work away from a known
finite list of primes, so we occasionally give characteristic-specific statements.  For a partition $\lambda$ of $n$,
we write $V_{\lambda}$ for the corresponding irreducible representation of $\SS_n$.  The $\SS_8$-representations
important for us are the trivial ($V_8$) and sign ($\sgn := V_{1^8}$) representations, and the two $14$-dimensional
representations $V_{4,4}$ and  $V_{2,2,2,2}$.  The latter two are
skew-dual: $V_{4,4} \otimes \sgn \cong V_{2,2,2,2}$.  The
representation $V_{3,1,1,1,1,1}$ appears in \S \ref{s:N8}.

\subsection{Main constructions (see Figure~\ref{f:figureeight}) }
\label{mainconstructions}

All statements made here will be justified later in the paper.
\begin{figure}[ht]
\begin{center}
\setlength{\unitlength}{0.00083333in}
\begingroup\makeatletter\ifx\SetFigFont\undefined%
\gdef\SetFigFont#1#2#3#4#5{%
  \reset@font\fontsize{#1}{#2pt}%
  \fontfamily{#3}\fontseries{#4}\fontshape{#5}%
  \selectfont}%
\fi\endgroup%
{\renewcommand{\dashlinestretch}{30}
\begin{picture}(5772,2947)(0,-10)
\put(5409,2272){\makebox(0,0)[lb]{{\SetFigFont{8}{9.6}{\rmdefault}{\mddefault}{\updefault}$\subset$}}}
\blacken\path(1164.000,2227.000)(1134.000,2347.000)(1104.000,2227.000)(1164.000,2227.000)
\path(1134,2347)(1134,2047)
\blacken\path(1104.000,2167.000)(1134.000,2047.000)(1164.000,2167.000)(1104.000,2167.000)
\put(459,1897){\makebox(0,0)[lb]{{\SetFigFont{8}{9.6}{\rmdefault}{\mddefault}{\updefault}$(\proj^1)^8// \Aut \proj^1$}}}
\put(684,97){\makebox(0,0)[lb]{{\SetFigFont{8}{9.6}{\rmdefault}{\mddefault}{\updefault}$= N_8$}}}
\put(459,2197){\makebox(0,0)[lb]{{\SetFigFont{5}{6.0}{\rmdefault}{\mddefault}{\updefault}Gale duality}}}
\put(84,772){\makebox(0,0)[lb]{{\SetFigFont{5}{6.0}{\rmdefault}{\mddefault}{\updefault}Gale duality}}}
\path(1734,2422)(2109,2347)
\blacken\path(1985.447,2341.117)(2109.000,2347.000)(1997.214,2399.951)(1985.447,2341.117)
\path(1734,1972)(2109,2197)
\blacken\path(2021.536,2109.536)(2109.000,2197.000)(1990.666,2160.985)(2021.536,2109.536)
\blacken\path(2015.085,766.502)(2109.000,847.000)(1988.252,820.167)(2015.085,766.502)
\path(2109,847)(1659,622)
\path(1809,547)(3984,772)
\blacken\path(3867.724,729.811)(3984.000,772.000)(3861.550,789.493)(3867.724,729.811)
\dashline{60.000}(3834,997)(3009,2197)
\blacken\path(3101.705,2115.111)(3009.000,2197.000)(3052.262,2081.119)(3101.705,2115.111)
\dashline{60.000}(3909,2197)(3084,997)
\blacken\path(3127.262,1112.881)(3084.000,997.000)(3176.705,1078.889)(3127.262,1112.881)
\dashline{60.000}(3684,2197)(459,547)
\blacken\path(552.166,628.365)(459.000,547.000)(579.494,574.950)(552.166,628.365)
\dashline{60.000}(1884,2722)(1884,1747)
\dashline{60.000}(1884,1222)(1884,247)
\blacken\path(4839.000,2077.000)(4809.000,2197.000)(4779.000,2077.000)(4839.000,2077.000)
\dashline{60.000}(4809,2197)(4809,997)
\blacken\path(4779.000,1117.000)(4809.000,997.000)(4839.000,1117.000)(4779.000,1117.000)
\put(2184,2272){\makebox(0,0)[lb]{{\SetFigFont{8}{9.6}{\rmdefault}{\mddefault}{\updefault}$M_8 = \Sing(\text{cubic})$ }}}
\put(2184,847){\makebox(0,0)[lb]{{\SetFigFont{8}{9.6}{\rmdefault}{\mddefault}{\updefault}$N'_8 = \Sing(\text{quintic})$}}}
\put(2484,2722){\makebox(0,0)[lb]{{\SetFigFont{5}{6.0}{\rmdefault}{\mddefault}{\updefault}$\dim 5$}}}
\put(309,397){\makebox(0,0)[lb]{{\SetFigFont{8}{9.6}{\rmdefault}{\mddefault}{\updefault}$(\proj^3)^8 // \Aut \proj^3$}}}
\put(1359,697){\makebox(0,0)[lb]{{\SetFigFont{5}{6.0}{\rmdefault}{\mddefault}{\updefault}2:1 (Gale)}}}
\put(2334,1447){\makebox(0,0)[lb]{{\SetFigFont{5}{6.0}{\rmdefault}{\mddefault}{\updefault}Segre}}}
\put(2409,1597){\makebox(0,0)[lb]{{\SetFigFont{5}{6.0}{\rmdefault}{\mddefault}{\updefault}$f$}}}
\put(2484,397){\makebox(0,0)[lb]{{\SetFigFont{5}{6.0}{\rmdefault}{\mddefault}{\updefault}$\dim 9$}}}
\put(3459,847){\makebox(0,0)[lb]{{\SetFigFont{8}{9.6}{\rmdefault}{\mddefault}{\updefault}$\subset$}}}
\put(3459,2272){\makebox(0,0)[lb]{{\SetFigFont{8}{9.6}{\rmdefault}{\mddefault}{\updefault}$\subset$}}}
\put(4359,2272){\makebox(0,0)[lb]{{\SetFigFont{8}{9.6}{\rmdefault}{\mddefault}{\updefault}$\subset$}}}
\put(4359,847){\makebox(0,0)[lb]{{\SetFigFont{8}{9.6}{\rmdefault}{\mddefault}{\updefault}$\subset$}}}
\put(3759,847){\makebox(0,0)[lb]{{\SetFigFont{8}{9.6}{\rmdefault}{\mddefault}{\updefault}divisor}}}
\put(3684,2272){\makebox(0,0)[lb]{{\SetFigFont{8}{9.6}{\rmdefault}{\mddefault}{\updefault}$\Sec(M_8)$}}}
\put(5634,847){\makebox(0,0)[lb]{{\SetFigFont{8}{9.6}{\rmdefault}{\mddefault}{\updefault}$\proj^{13 \vee}$}}}
\put(459,2422){\makebox(0,0)[lb]{{\SetFigFont{8}{9.6}{\rmdefault}{\mddefault}{\updefault}$(\proj^5)^8 // \Aut \proj^5$}}}
\put(4209,1672){\makebox(0,0)[lb]{{\SetFigFont{5}{6.0}{\rmdefault}{\mddefault}{\updefault}projective}}}
\put(4284,1522){\makebox(0,0)[lb]{{\SetFigFont{5}{6.0}{\rmdefault}{\mddefault}{\updefault}dual}}}
\put(5634,1597){\makebox(0,0)[lb]{{\SetFigFont{5}{6.0}{\rmdefault}{\mddefault}{\updefault}dual}}}
\put(3084,1297){\makebox(0,0)[lb]{{\SetFigFont{5}{6.0}{\rmdefault}{\mddefault}{\updefault}$f'$}}}
\put(3759,2722){\makebox(0,0)[lb]{{\SetFigFont{5}{6.0}{\rmdefault}{\mddefault}{\updefault}$\dim 11$}}}
\put(3759,397){\makebox(0,0)[lb]{{\SetFigFont{5}{6.0}{\rmdefault}{\mddefault}{\updefault}$\dim 11$}}}
\put(4659,397){\makebox(0,0)[lb]{{\SetFigFont{5}{6.0}{\rmdefault}{\mddefault}{\updefault}$\dim 12$}}}
\put(4659,2722){\makebox(0,0)[lb]{{\SetFigFont{5}{6.0}{\rmdefault}{\mddefault}{\updefault}$\dim 12$}}}
\put(1659,2872){\makebox(0,0)[lb]{{\SetFigFont{5}{6.0}{\rmdefault}{\mddefault}{\updefault}representation $V_{4,4}$}}}
\put(1509,22){\makebox(0,0)[lb]{{\SetFigFont{5}{6.0}{\rmdefault}{\mddefault}{\updefault}representation $V_{2,2,2,2}$}}}
\put(4659,2272){\makebox(0,0)[lb]{{\SetFigFont{8}{9.6}{\rmdefault}{\mddefault}{\updefault}cubic $\cubic$}}}
\put(4659,847){\makebox(0,0)[lb]{{\SetFigFont{8}{9.6}{\rmdefault}{\mddefault}{\updefault}quintic $\quintic$}}}
\put(5634,2272){\makebox(0,0)[lb]{{\SetFigFont{8}{9.6}{\rmdefault}{\mddefault}{\updefault}$\proj^{13}$}}}
\put(5409,847){\makebox(0,0)[lb]{{\SetFigFont{8}{9.6}{\rmdefault}{\mddefault}{\updefault}$\subset$}}}
\put(204.000,472.000){\arc{390.000}{0.3948}{5.8884}}
\blacken\path(282.729,618.022)(384.000,547.000)(328.066,657.324)(282.729,618.022)
\blacken\path(328.066,286.676)(384.000,397.000)(282.729,325.978)(328.066,286.676)
\end{picture}
}
\end{center}
\caption{Diagram of interrelationships.}
\label{f:figureeight}
\end{figure}

The ring $R_{\bullet}(M_8) = \bigoplus_k \Gamma(\mathcal{O}_{\proj^1}(k)^{\boxtimes 8})^{\SL(2)}$ is generated in
degree $1$ (Kempe's 1894 theorem, see for example \cite[Thm.~2.3]{hmsv1}), and $\dim R_1(M_8) = 14$ (\S \ref{ss:p1}).
We thus have a natural closed immersion  $M_8 \hookrightarrow \proj^{13}$.
By Schur--Weyl duality or a comparison of
tableaux descriptions  (\S
\ref{ss:p1}--\ref{ss:p2}), $R_1(M_8)$ carries the irreducible $\SS_8$-representation $V_{4,4}$.

The ideal of relations of $M_8$,
\begin{displaymath}
I_{\bullet}(M_8) := \ker(\Sym^{\bullet}  R_1(M_8) \to R_{\bullet}(M_8)),
\end{displaymath}
is generated by 14 quadratic relations (Corollary~\ref{generation}, known earlier by computer calculation as described in
(B) above).  There is (up to multiplication by non-zero scalar) a
unique skew-invariant cubic (an element of
$\Sym^3 R_1(M_8)$, Proposition~\ref{p:repfacts}(a)).  We call this cubic the {\em skew cubic} $\cubic$, and by abuse of
notation we call the corresponding hypersurface $\cubic$ as well.  
 The fact that $M_8$ lies on the skew cubic $\cubic$ is a consequence
  of Thomae's formula for hyperelliptic curves.  (We thank Sam
  Grushevsky explaining this to us.)
But more is true ---  the fivefold $M_8$ is the singular locus of
$\cubic$ in a strong sense: $I_{\bullet}(M_8)$ is the Jacobian ideal of $\cubic$ --- the 14 partial derivatives of
the skew cubic $\cubic$ generate $I_{\bullet}(M_8)$  and are of course the 14 quadratic relations described above
(\S \ref{s:threeone}).  (In fact, this result holds away from characteristic $3$.  In characteristic $3$, the Euler
formula yields a linear syzygy among the 14 quadratic relations, and the skew cubic $\cubic$ can be taken as the
remaining generator of the ideal.)

The ring $R_{\bullet}(N_8) = \bigoplus_k \Gamma(\mc{O}_{\proj^3}(k)^{\boxtimes 8})^{\SL(4)}$ is generated in degree
$1$ and $2$ (Proposition~\ref{browncow}), and $\dim R_1(N_8)=14$.  As an $\SS_8$-module, $R_1(N_8)$ is irreducible of type
$V_{2,2,2,2}$ (as with $R_1(M_8)$, by Schur--Weyl duality, \S \ref{ss:p2}, or by direct comparison of the tableaux
description).  The Gale-invariant subalgebra is the subalgebra $R_{\bullet}(N'_8) \subset
R_{\bullet}(N_8)$ generated by $R_1(N_8)$.  More precisely:  we {\em define} the graded
ring $R_{\bullet}(N'_8)$ as the subalgebra of $R_{\bullet}(N_8)$ generated in degree $1$ (i.e., by $R_1(N_8)$),
and {\em define} $N'_8 = \Proj R_{\bullet}(N'_8)$, then {\em show} 
 (in Proposition~\ref{anotherbrowncow})
that $R_{\bullet}(N'_8)$ is the Gale-invariant
subalgebra.

Bezout's theorem implies $\Sec(M_8) \subset \cubic$:  restricting the cubic form $\cubic$ to any line joining two
distinct points of $M_8$ yields a cubic vanishing to order $2$ at those two points (as $M_8 = \Sing \cubic$), so this
cubic must vanish on the line.  The secant variety $\Sec(M_8)$ has
dimension $11$  as one would expect (Corollary~\ref{c:2142}(a)), and is
thus a divisor on the $12$-fold $\cubic$.

Let
\begin{displaymath}
I_{\bullet}(N'_8) := \ker \left( \Sym^{\bullet}(R_1(N'_8)) \to R_{\bullet}(N'_8) \right)
\end{displaymath}
be the ideal of relations of $N'_8$.  By comparing the readily computable $\SS_8$-representations $\Sym^5(R_1(N'_8))$
and $R_5(N'_8)$, we find that there is a skew quintic {\em relation} $\quintic$ in $I_5(N'_8)$
(Proposition~\ref{p:repfacts}(c) and  Theorem~\ref{t:Qdual}; uniqueness
is shown later in Proposition~\ref{uniqueskewquintic}).  Furthermore, the ninefold $N'_8$ is the singular locus of
$\quintic$ (Theorem~\ref{t:singQ}).

\begin{figure}[ht]
\begin{center}
\setlength{\unitlength}{0.00083333in}
\begingroup\makeatletter\ifx\SetFigFont\undefined%
\gdef\SetFigFont#1#2#3#4#5{%
  \reset@font\fontsize{#1}{#2pt}%
  \fontfamily{#3}\fontseries{#4}\fontshape{#5}%
  \selectfont}%
\fi\endgroup%
{\renewcommand{\dashlinestretch}{30}
\begin{picture}(3488,1597)(0,-10)
\put(1350,622){\makebox(0,0)[lb]{{\SetFigFont{5}{6.0}{\rmdefault}{\mddefault}{\updefault}duality contracts}}}
\path(825,1147)(600,472)
\path(975,997)(825,547)
\path(1050,1447)(975,997)
\path(965.136,1120.299)(975.000,997.000)(1024.320,1110.435)
\path(825,172)(825,547)
\path(855.000,427.000)(825.000,547.000)(795.000,427.000)
\path(2025,1297)(2100,922)
\path(2047.049,1033.786)(2100.000,922.000)(2105.883,1045.553)
\path(825,547)(2100,922)
\blacken\path(1993.341,859.359)(2100.000,922.000)(1976.411,916.921)(1993.341,859.359)
\path(975,997)(2100,922)
\blacken\path(1978.270,900.049)(2100.000,922.000)(1982.261,959.916)(1978.270,900.049)
\path(900,772)(2100,922)
\blacken\path(1984.648,877.347)(2100.000,922.000)(1977.206,936.884)(1984.648,877.347)
\path(300,1297)(302,1296)(307,1295)
	(316,1294)(328,1291)(343,1287)
	(362,1283)(382,1278)(404,1273)
	(427,1267)(452,1261)(478,1255)
	(505,1248)(535,1240)(567,1231)
	(600,1222)(636,1212)(666,1203)
	(689,1197)(705,1193)(715,1191)
	(721,1191)(725,1191)(729,1191)
	(734,1190)(743,1187)(756,1182)
	(775,1174)(798,1162)(825,1147)
	(850,1131)(871,1117)(887,1106)
	(897,1100)(902,1097)(905,1096)
	(906,1097)(907,1097)(910,1095)
	(915,1089)(923,1078)(936,1059)
	(954,1032)(975,997)(990,970)
	(1004,943)(1018,917)(1030,894)
	(1042,874)(1052,856)(1062,841)
	(1071,827)(1079,815)(1088,803)
	(1095,792)(1103,780)(1110,766)
	(1116,751)(1122,734)(1127,715)
	(1131,693)(1132,670)(1130,646)
	(1125,622)(1113,597)(1097,577)
	(1080,564)(1063,556)(1047,553)
	(1033,553)(1019,556)(1006,559)
	(993,564)(978,567)(961,570)
	(940,570)(917,568)(889,563)
	(858,556)(825,547)(789,537)
	(759,527)(736,520)(720,514)
	(710,509)(704,506)(700,503)
	(696,501)(691,498)(682,494)
	(669,489)(650,484)(627,478)
	(600,472)(571,467)(549,463)
	(536,461)(530,460)(529,460)
	(531,459)(533,460)(532,460)
	(525,461)(510,463)(484,467)
	(450,472)(421,477)(392,483)
	(364,489)(337,495)(311,502)
	(286,508)(262,515)(239,521)
	(217,527)(197,533)(180,538)
	(167,542)(158,545)(153,546)(150,547)
\path(2400,1222)(2434,1231)(2465,1238)
	(2494,1242)(2518,1244)(2537,1245)
	(2553,1245)(2565,1245)(2574,1244)
	(2581,1242)(2588,1241)(2594,1239)
	(2601,1236)(2610,1232)(2622,1227)
	(2638,1220)(2657,1211)(2681,1200)
	(2710,1186)(2741,1168)(2775,1147)
	(2805,1126)(2834,1104)(2860,1083)
	(2883,1063)(2904,1046)(2922,1032)
	(2938,1019)(2952,1009)(2965,1000)
	(2977,992)(2988,984)(2998,977)
	(3008,968)(3018,958)(3028,946)
	(3039,931)(3048,913)(3058,892)
	(3066,867)(3072,838)(3076,806)
	(3075,772)(3070,741)(3061,711)
	(3051,683)(3039,658)(3027,635)
	(3015,615)(3003,598)(2992,582)
	(2981,569)(2970,556)(2960,545)
	(2950,534)(2939,524)(2928,514)
	(2915,503)(2901,492)(2885,481)
	(2867,468)(2846,455)(2823,442)
	(2796,429)(2766,416)(2734,405)
	(2700,397)(2666,393)(2633,392)
	(2602,394)(2574,397)(2550,403)
	(2529,409)(2512,415)(2497,423)
	(2485,430)(2474,437)(2465,445)
	(2456,453)(2448,462)(2439,470)
	(2429,480)(2417,490)(2404,502)
	(2389,514)(2371,528)(2350,544)
	(2327,561)(2302,580)(2276,600)
	(2250,622)(2224,646)(2201,669)
	(2182,689)(2165,707)(2152,722)
	(2141,735)(2133,744)(2126,751)
	(2121,757)(2116,762)(2112,766)
	(2109,770)(2106,775)(2104,781)
	(2101,788)(2098,798)(2096,811)
	(2094,827)(2092,847)(2093,870)
	(2095,895)(2100,922)(2108,949)
	(2119,975)(2131,998)(2143,1019)
	(2154,1037)(2164,1053)(2174,1066)
	(2183,1077)(2191,1087)(2199,1095)
	(2206,1103)(2214,1111)(2223,1119)
	(2233,1127)(2245,1136)(2258,1147)
	(2275,1158)(2294,1170)(2316,1184)
	(2342,1197)(2370,1210)(2400,1222)
\put(900,1522){\makebox(0,0)[lb]{{\SetFigFont{5}{6.0}{\rmdefault}{\mddefault}{\updefault}$8$ points in $\mathbb{P}^1$}}}
\put(375,22){\makebox(0,0)[lb]{{\SetFigFont{5}{6.0}{\rmdefault}{\mddefault}{\updefault}$8$ points in $\mathbb{P}^1$}}}
\put(1800,1372){\makebox(0,0)[lb]{{\SetFigFont{5}{6.0}{\rmdefault}{\mddefault}{\updefault}$8$ points in $\mathbb{P}^1 \times \mathbb{P}^1 \subset \mathbb{P}^3$}}}
\put(0,922){\makebox(0,0)[lb]{{\SetFigFont{8}{9.6}{\rmdefault}{\mddefault}{\updefault}$\operatorname{Sec}(M_8)$}}}
\put(1050,322){\makebox(0,0)[lb]{{\SetFigFont{8}{9.6}{\rmdefault}{\mddefault}{\updefault}$M_8$}}}
\put(3150,472){\makebox(0,0)[lb]{{\SetFigFont{8}{9.6}{\rmdefault}{\mddefault}{\updefault}$N_8$}}}
\put(1575,547){\makebox(0,0)[lb]{{\SetFigFont{5}{6.0}{\rmdefault}{\mddefault}{\updefault}secant lines}}}
\path(600,1222)(450,472)
\end{picture}
}
\end{center}
\caption{The contraction of the secant variety of $M_8= (\proj^1)^8 \cq
  \SL(2) $ to $N_8 = (\proj^3)^8 \cq \SL(4)$.}
\label{f:paris}
\end{figure}

Moreover, $\cubic$ and $\quintic$ are dual hypersurfaces in the sense of projective geometry (Theorem~\ref{t:Qdual}).
\begin{displaymath}
\xymatrix{
\cubic \ar@/_/@{-->}[r]_D &
\quintic \ar@/_/@{-->}[l]_{D'} }
\end{displaymath}
Every secant line $\ell = \overline{pq}$ to $M_8$ (where $p,q \in
M_8$, $p \neq q$) is contracted by the dual map $D: \cubic
\dashrightarrow \quintic$: the dual map is given by the 14 partial derivatives of $\cubic$; their restrictions to
$\ell$ are 14 quadratic relations vanishing at the same two points $p, q$, so they are the same up to scalar.  Thus
$\Sec(M_8)$ is contained in the exceptional divisor of the dual map $D: \cubic \dashrightarrow \quintic$, and in fact
is the entire exceptional divisor (\S \ref{s:singQ}).  Thus the dual map $D$ contracts $\Sec(M_8)$ to $\Sing(\quintic)
= N'_8$.  Furthermore, this map $\Sec(M_8) \dashrightarrow N'_8$ lifts to $\Sec(M_8) \dashrightarrow N_8$, and this
map can be interpreted geometrically as follows (Theorem~\ref{t:cooltheorem}, see Figure~\ref{f:paris}).  Suppose we are
given a point of $\Sec(M_8)$ on a line connecting two general points of $M_8$.  This corresponds to two ordered
octuples of points on $\proj^1$, or equivalently an ordered octuple of points on $\proj^1 \times \proj^1$.  Embedding
$\proj^1 \times \proj^1$ by the Segre map yields $8$ points in $\proj^3$, and hence a point of $N_8$.  The rational map
$\Sec(M_8) \dashrightarrow N_8$ must contract $2$ dimensions ($\dim \Sec(M_8) = 11$ while $\dim N'_8 = 9$); one is
the contraction of the secant line, and the other corresponds to the fact that there is a pencil of quadrics passing
through $8$ points in $\proj^3$.

Although it is not clear from the above description, Theorem~\ref{t:cooltheorem} is the hook on which the rest of
the argument hangs.

We conjecture that the interrelationships of Figure~\ref{f:figureeight} can be completed as follows.

\begin{conj}
The skew quintic $\quintic$ is the trisecant variety (the union of trisecant {\em lines}) of $N'_8$.  The divisor
contracted to $M_8$ by the dual map $D': \quintic \dashrightarrow \cubic$ is the quadrisecant variety (union of
$4$-secant lines) of $N'_8$.\label{conj}
\end{conj}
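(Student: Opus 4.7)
The plan is to prove each half of the conjecture by first establishing the easy containment via Bezout and then obtaining the reverse inclusion from a dimension count together with the irreducibility of $\quintic$.

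For the easy directions, any line $\ell$ meeting $N'_8=\Sing(\quintic)$ in three distinct points intersects $\quintic$ with total multiplicity at least $2+2+2=6>5$, so $\ell\subseteq\quintic$ by Bezout; hence $\operatorname{Tri}(N'_8)\subseteq\quintic$. Similarly, $D'$ is defined by the $14$ quartic partials $\partial\quintic/\partial x_i$, each vanishing on $\Sing(\quintic)=N'_8$. Restricted to a line $\ell$ through four points of $N'_8$, each such partial is a quartic on $\ell\cong\proj^1$ with at least four roots, and so is a scalar multiple of a single quartic form on $\ell$ (or vanishes identically). The $14$ partials are therefore proportional along $\ell$, so $D'$ sends $\ell$ to a single point; the quadrisecant variety thus lies in the exceptional divisor of $D'$, which by biduality is contracted onto $M_8$.

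The hard directions reduce to dimension counts: one needs $\dim \operatorname{Tri}(N'_8)=\dim\quintic=12$ and $\dim \operatorname{Quad}(N'_8)=11$, the expected dimension of the exceptional divisor of $D'$ by biduality with the contracted divisor $\Sec(M_8)\subset\cubic$. Since $\quintic$ is irreducible (being the unique, up to scalar, $\SS_8$-skew quintic), the equality $\operatorname{Tri}(N'_8)=\quintic$ will follow from the dimension count alone. The quadrisecant case has a genuine excess over generic behavior: a generic $9$-fold in $\proj^{13}$ carries only an $8$-parameter family of quadrisecant lines (swept variety of dimension $9$), whereas for $N'_8$ the conjecture demands a $10$-parameter family (swept variety of dimension $11$), a two-parameter excess. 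To produce these large families I would proceed by analogy with the Segre splitting on the $M_8$ side: the fibre of $\Sec(M_8)\to N'_8$ over a general point is the $2$-dimensional pencil of quadrics through $8$ points in $\proj^3$, each smooth quadric realizing an ordered $8$-tuple on $\proj^1\times\proj^1\hookrightarrow\proj^3$ and thus a pair of $8$-tuples on $\proj^1$ (a secant of $M_8$). The dual story should assemble three such $8$-point configurations lying on a shared \emph{net} of quadrics into a trisecant of $N'_8$, and four such configurations on a \emph{web} into a quadrisecant. Biduality applied to the exceptional divisor of $D$ should then identify the exceptional divisor of $D'$ with the quadrisecant variety.

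The main obstacle is this dimension count. For trisecants the inclusion in $\quintic$ is automatic from Bezout, but showing equality requires producing a trisecant through every smooth point of $\quintic$ --- a $12$-parameter construction. For quadrisecants one must in addition generate a two-parameter excess over generic behavior. Both tasks appear to require making the $\SS_8$/Gale symmetry of $N'_8$ act explicitly on the relevant incidence varieties of $8$-point configurations with prescribed quadric-incidence conditions, or to transport the structure through biduality from the intricate secant geometry of $M_8$. Neither route yields the required construction by elementary means, which is why the statement is offered as a conjecture rather than a theorem.
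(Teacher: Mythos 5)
This statement is a conjecture in the paper, not a theorem; the paper offers only the easy Bezout containments as evidence and leaves the reverse inclusions open, exactly as you conclude.  Your Bezout arguments for the two easy containments agree with the paper's: a line through three points of $N'_8 = \Sing(\quintic)$ meets the quintic with total multiplicity $\geq 6 > 5$ and so lies in $\quintic$; and on a line through four points of $N'_8$ the fourteen quartic partials of $\quintic$, each vanishing at those four points, become proportional, so $D'$ contracts the line.  You also correctly identify the missing reverse inclusions as the obstruction and correctly note that this is why the statement remains a conjecture.

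However, your naive dimension count is off, and this matters because it mischaracterizes the nature of the surprise the paper is pointing at.  The Schubert condition that a line in $\proj^{13}$ meet a variety of codimension $4$ imposes codimension $4 - 1 = 3$ (not $4$) on the $24$-dimensional Grassmannian of lines.  Hence the naive quadrisecant family of a $9$-fold in $\proj^{13}$ has dimension $24 - 4 \cdot 3 = 12$, sweeping out an expected $13$-fold, and the naive trisecant family has dimension $24 - 3 \cdot 3 = 15$, also sweeping out all of $\proj^{13}$.  This is exactly what the paper says (``a naive dimension count suggests that the trisecants/quadrisecants should easily cover all of $\proj^{13}$'').  So the conjecture asserts a \emph{deficiency}, not an excess: the trisecants and quadrisecants of $N'_8$ fail to fill up $\proj^{13}$ and are instead confined to a hypersurface (resp.\ a divisor on it).  The two-parameter ``excess'' you describe, and the proposed strategy of producing extra quadrisecants from nets and webs of quadrics, is therefore aimed at the wrong target.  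A minor additional point: irreducibility of $\quintic$ does not follow merely from uniqueness of the skew quintic up to scalar (a reducible form could also be the unique element of a one-dimensional $\SS_8$-isotypic piece); rather, $\quintic$ is irreducible because it is the projective dual of the irreducible cubic $\cubic$.
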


As evidence, note that the trisecant variety to $N'_8$ lies in the skew quintic, by Bezout's theorem, even though a
naive dimension count suggests that the trisecants should ``easily cover'' all of $\proj^{13}$.  Similarly, Bezout's
theorem implies that the quadrisecant variety to $N'_8$ lies in the contracted divisor (analogous to the above argument
showing that secant lines to $M_8$ are contracted by the dual map),
even though  a naive dimension count suggests that the
quadrisecants should ``easily cover'' all of $\proj^{13}$.

\subsection{Other manifestations of this space, and this graded ring}
\label{othermanifestations}

The extrinsic and intrinsic geometry of $M_n := (\proj^1)^n \!\cq_{1^n} \SL(2)$ for small $n$ has special meaning often
related to the representation theory of $\SS_n$.  For example, $M_4$ relates to the cross ratio, $M_5$ is the quintic
del Pezzo surface, and the Segre cubic $M_6$ has well known remarkable
geometry (see \cite{hmsv6} for further discussion).
The space $M_8$ may be the last of the $M_n$ with such individual personality.  For example, over $\C$, the space may
be interpreted as a ball quotient in two ways:
\begin{enumerate}
\item Deligne and Mostow \cite{dm} showed that $M_8$ is isomorphic to the Satake-Baily-Borel compactification of an
arithmetic quotient of the $5$-dimensional complex ball, using the theory of periods of a family of curves that are
fourfold cyclic covers of $\P^1$ branched at the $8$ points.
\item Kondo \cite{kondo} showed that $M_8$ may also be interpreted in terms of moduli of certain K3 surfaces, and thus
$M_8$ is isomorphic to the Satake-Baily-Borel compactification of a quotient of the complex $5$-ball by $\Gamma(1-i)$,
an arithmetic subgroup of a unitary group of a hermitian form of signature $(1,5)$ defined over the Gaussian integers.
See also \cite[p.~12]{fs} for details  and discussion.
\end{enumerate}
Both interpretations are $\SS_8$-equivariant (see \cite[p.~8]{kondo} for the second).

Similarly, the graded ring $R_{\bullet}(M_8)$ has a number of manifestations:
\begin{enumerate}
\item It is isomorphic to the full ring of modular forms of $\Gamma(1-i)$ \cite[p.~2]{fs}, via the Borcherds additive
lifting.
\item It is the space of sections of multiples of a certain line bundle on $\ol{\mc{M}}_{0,8}$ (as there is a morphism
$\ol{\mc{M}}_{0,8} \to M_8$, \cite{kapranov}, see also \cite{avritzerlange}).
\item Igusa \cite{igusa} showed that there is a natural
  (non-surjective)  map $A(\Gamma_3[2])/\mc{I}_3[2]^0 \to R_{\bullet}(M_8)$, where
$A(\Gamma_3[2])$ is the ring of Siegel modular forms of weight 2 and genus 3. (See \cite[\S 3]{fs} for more discussion.)\item It is a quotient of the third in a sequence of algebras related to the orthogonal group $\mathrm{O}(2m, \F_2)$
defined by Freitag and Salvati Manni, see \cite{fs1}, \cite[\S 2]{fs}.  (The cases $m=5$ and $m=6$ are related to
Enriques surfaces.)
\end{enumerate}

One reason for $M_8$ to be special is the coincidence $\SS_8 \cong \mathrm{O}(6, \F_2)$.  A geometric description of
this isomorphism in this context is given in \cite[\S 4]{fs}.  Another reason is Deligne and Mostow's table
\cite[p.~86]{dm}.

\subsection{Miscellaneous facts about $M_8$ and $N_8$}
\label{s:miscalgebra}

We collect miscellaneous facts about $M_8$ and $N_8$ in case they
prove useful.  The graded free resolution is given in
Proposition~\ref{prop:betti}.\cut{[put it here; gradedness appears to
  be new; F-SM don't do this --- Ben said Jul 13 that F-SM undoubtedly
  had this, but didn't publish it].}  The Hilbert function $f(k) =
\dim{R_k(M_8)}$ follows from this, but was computed classically (see
for example \cite[p.~155, \S 5.4.2.3]{howe}):
\begin{displaymath}
f(k) = \tfrac{1}{3} \left(  k^5 + 5  k^4 + 11 k^3+ 13 k^2+9k+3 \right)
\end{displaymath}
(note this is the same as the Hilbert polynomial),
from which the Hilbert series $\sum_{k=0}^\infty f(k) t^k$ is 
\begin{equation}
\label{howenowe}
\frac{1+8t+22t^2+8t^3+t^4}{(1-t)^6}.
\end{equation}
(Both formulas are given in \cite[p.~7]{fs}.)  The degree of
$M_8$ is 40 (the sum of the coefficients of the numerator, or by the
method of \cite[p. 190]{hmsv1}).
Of course $M_8$ is projectively normal, by what is sometimes called the first fundamental
theorem of invariant theory.  It is arithmetically Gorenstein, as the
numerator
of the Hilbert series is symmetric
\cite[Corollary~4.4.6]{BrunsHerzog}.
Thus the $a$-invariant is
$-2$  (see Proposition~\ref{prop:goren}).
   It doesn't satisfy the $N_2$ condition
of Green and Lazarsfeld:  from the minimal graded free
resolution of \S \ref{s:freeresolution}  the $14$ quadric relations have
nonlinear syzygies.
It  is not Koszul (as the dual Hilbert series $1/H(-t)$ has negative
coefficients, and for Koszul algebras this cannot happen, see for
example \cite[equ.\ (1)]{p}).

By computer calculation, one may show that \cut{Ben  July 12, 2010 e}the  Hilbert series for $N_8$
is
\begin{displaymath}
\frac{1 + 4 t + 31 t^2 + 40 t^3 + 31 t^4 + 4 t^5 + t^6}{(1-t)^{10}},
\end{displaymath}
from which we see that $N_8$ is arithmetically Gorenstein, and the
$a$-invariant is $-4$.    
Another way to see that $N_8$ is Gorenstein is to apply a result of
F. Knop \cite{knop} that given a linear action of a group on affine
space that  preserves volume (i.e.\ it is a subgroup of
$\SL$),  such that  the unstable locus has
codimension at least $2$,   the subring of invariants is
Gorenstein.\cut{Ben Jul 13.  I've got text up front.  (Knop also
shows that the $a$-invariant for $n$ equally weighted points in
$\proj^m$ is $-\gcd(n,m+1)$, which explains the $a$-invariants of $-2$
and $-4$ above.)}
One may similarly compute that the Hilbert series for $N'_8$ is 
$$
\frac { 1 + 4 t + 10 t^2 + 20 t^3 + 21 t^4} { (1-t)^{10}}
$$ from which   $\deg N'_8 = 56$.

\subsection{Relation to the six-point case}
\label{s:relationtosix}

(We will not need this picture, so we omit all details.)  The classical geometry of six points in projective space,
Figure~\ref{f:figsix}, shows strong similarities to Figure~\ref{f:figureeight}.  This can be made more precise in a
number of ways.  Here is one way to see Figure~\ref{f:figsix} ``at the boundary'' of Figure~\ref{f:figureeight}.  In
the space of 8 points in $\proj^3$ (the bottom left of Figure~\ref{f:figureeight}), consider the locus where the two
given points (of the eight) coincide.  Projecting from that point of $\proj^3$, the remaining six points (generally)
give six points in $\proj^2$ (the bottom left of Figure~\ref{f:figsix}).  This can be extended to all parts of the two
Figures, in a way respecting the Gale and projective dualities.   

\begin{figure}[ht]
\begin{center}
\setlength{\unitlength}{0.00083333in}
\begingroup\makeatletter\ifx\SetFigFont\undefined%
\gdef\SetFigFont#1#2#3#4#5{%
  \reset@font\fontsize{#1}{#2pt}%
  \fontfamily{#3}\fontseries{#4}\fontshape{#5}%
  \selectfont}%
\fi\endgroup%
{\renewcommand{\dashlinestretch}{30}
\begin{picture}(5247,2458)(0,-10)
\put(5109,697){\makebox(0,0)[lb]{{\SetFigFont{8}{9.6}{\rmdefault}{\mddefault}{\updefault}$\proj^{4 \vee}$}}}
\path(2034,1822)(3684,1672)
\blacken\path(3561.777,1652.987)(3684.000,1672.000)(3567.209,1712.741)(3561.777,1652.987)
\path(2034,1222)(3684,1522)
\blacken\path(3571.302,1471.018)(3684.000,1522.000)(3560.569,1530.050)(3571.302,1471.018)
\path(2034,1147)(3684,772)
\blacken\path(3560.335,769.341)(3684.000,772.000)(3573.633,827.849)(3560.335,769.341)
\path(2034,322)(5109,622)
\blacken\path(4992.480,580.490)(5109.000,622.000)(4986.654,640.206)(4992.480,580.490)
\blacken\path(1239.000,1552.000)(1209.000,1672.000)(1179.000,1552.000)(1239.000,1552.000)
\path(1209,1672)(1209,1372)
\blacken\path(1179.000,1492.000)(1209.000,1372.000)(1239.000,1492.000)(1179.000,1492.000)
\blacken\path(4089.000,1402.000)(4059.000,1522.000)(4029.000,1402.000)(4089.000,1402.000)
\dashline{60.000}(4059,1522)(4059,847)
\blacken\path(4029.000,967.000)(4059.000,847.000)(4089.000,967.000)(4029.000,967.000)
\dashline{60.000}(2334,2047)(2334,172)
\path(3684,622)(2034,397)
\blacken\path(2148.846,442.938)(2034.000,397.000)(2156.953,383.489)(2148.846,442.938)
\put(534,1747){\makebox(0,0)[lb]{{\SetFigFont{8}{9.6}{\rmdefault}{\mddefault}{\updefault}$(\proj^3)^6 // \Aut \proj^3$}}}
\put(534,1222){\makebox(0,0)[lb]{{\SetFigFont{8}{9.6}{\rmdefault}{\mddefault}{\updefault}$(\proj^1)^6// \Aut \proj^1$}}}
\put(3759,1597){\makebox(0,0)[lb]{{\SetFigFont{8}{9.6}{\rmdefault}{\mddefault}{\updefault}Segre cubic}}}
\put(3759,697){\makebox(0,0)[lb]{{\SetFigFont{8}{9.6}{\rmdefault}{\mddefault}{\updefault}Igusa quartic}}}
\put(534,322){\makebox(0,0)[lb]{{\SetFigFont{8}{9.6}{\rmdefault}{\mddefault}{\updefault}$(\proj^2)^6 // \Aut \proj^2$}}}
\put(2859,997){\makebox(0,0)[lb]{{\SetFigFont{5}{6.0}{\rmdefault}{\mddefault}{\updefault}Veronese}}}
\put(2559,622){\makebox(0,0)[lb]{{\SetFigFont{5}{6.0}{\rmdefault}{\mddefault}{\updefault}Gale-fixed}}}
\put(1734,2347){\makebox(0,0)[lb]{{\SetFigFont{8}{9.6}{\rmdefault}{\mddefault}{\updefault}outer automorphism}}}
\put(159,772){\makebox(0,0)[lb]{{\SetFigFont{5}{6.0}{\rmdefault}{\mddefault}{\updefault}Gale duality}}}
\put(1359,1522){\makebox(0,0)[lb]{{\SetFigFont{5}{6.0}{\rmdefault}{\mddefault}{\updefault}Gale duality}}}
\put(1809,2122){\makebox(0,0)[lb]{{\SetFigFont{5}{6.0}{\rmdefault}{\mddefault}{\updefault}representation $V_{3,3}$}}}
\put(1809,22){\makebox(0,0)[lb]{{\SetFigFont{5}{6.0}{\rmdefault}{\mddefault}{\updefault}representation $V_{2,2,2}$}}}
\put(4134,1222){\makebox(0,0)[lb]{{\SetFigFont{5}{6.0}{\rmdefault}{\mddefault}{\updefault}projective dual}}}
\put(5109,1222){\makebox(0,0)[lb]{{\SetFigFont{5}{6.0}{\rmdefault}{\mddefault}{\updefault}dual}}}
\put(4734,1597){\makebox(0,0)[lb]{{\SetFigFont{8}{9.6}{\rmdefault}{\mddefault}{\updefault}$\subset$}}}
\put(4734,697){\makebox(0,0)[lb]{{\SetFigFont{8}{9.6}{\rmdefault}{\mddefault}{\updefault}$\subset$}}}
\put(5109,1597){\makebox(0,0)[lb]{{\SetFigFont{8}{9.6}{\rmdefault}{\mddefault}{\updefault}$\proj^4$}}}
\put(204.000,397.000){\arc{390.000}{0.3948}{5.8884}}
\blacken\path(282.729,543.022)(384.000,472.000)(328.066,582.324)(282.729,543.022)
\blacken\path(328.066,211.676)(384.000,322.000)(282.729,250.978)(328.066,211.676)
\end{picture}
}
\end{center}
\caption{The classical geometry of six points in projective space
  (cf.\ Figure~\ref{f:figureeight}).}
\label{f:figsix}
\end{figure}

\subsection{Acknowledgments}  

Foremost we thank Igor Dolgachev, who predicted the existence of the cubic of $\cubic$ to us.  Diane Maclagan and
Greg Smith gave essential advice on computational issues at key points
in this project.  We also thank Daniel Erman, Sam Grushevsky,
Shrawan Kumar, Riccardo Salvati Manni, and Larry O'Neil for helpful comments.

\section{Preliminaries on invariant theory and representation theory}

\subsection{Invariants of $n$ points in $\proj^{m-1}$ (with linearization $1, \dots, 1$)}
\label{ss:p1}

(See \cite{dolgachev} for a thorough introduction to all invariant theory facts we need.) The degree $d$ invariants of
$n$ points in $\proj^{m-1}$ are generated (as a vector space over a ground field $\fieldk$, or more generally as a
module over a ground ring) by invariants corresponding to certain tableaux:  $m \times (dn/m)$ matrices, with entries
consisting of the numbers $1$ through $n$, each appearing $d$ times.  To such a tableau, we associate a product of
$m \times m$ determinants, one for each column.  To each column, we associate the $m \times m$ determinant whose $i$th
row consists of the projective coordinates of the point indexed by the entry in that row.  For example, if $m=d=2$ and
$n=4$, and the four points in $\proj^1$ have coordinates $[x_i: y_i]$ ($1 \leq i \leq 4$), then corresponding to 
\begin{displaymath}
\begin{array}{|c|c|c|c|} \hline
1 & 2 & 1 & 4 \\  \hline
3 & 3 & 4 & 2 \\  \hline
\end{array}
\end{displaymath}
we associate the $\SL(2)$-invariant 
\begin{displaymath}
(x_1 y_3 - x_3 y_1) ( x_2 y_3 - x_3 y_2) (x_1 y_4 - x_4 y_1) (x_4 y_2- x_2 y_4).
\end{displaymath}
The linear relations among these invariants are spanned by three basic types:  (i) columns can be rearranged without
changing the invariant (obvious); (ii) swapping two entries in the same column changes the sign of the invariant
(obvious); and (iii) Pl\"ucker or straightening relations, which we do not describe here (see \cite[\S 1.3]{hmsv2} for
a graphical description).  The straightening algorithm implies that for fixed $n$, $m$, $d$, the semistable tableaux
(where the entries are increasing vertically and weakly increasing horizontally) form a basis.

If $m=2$ and $n$ is even, it is not hard to see (and a theorem of Kempe, see for example \cite[Thm.~2.3]{hmsv1}) that
the ring of invariants is generated in degree $1$.  Thus the GIT quotient $(\proj^1)^n \cq \SL(2)$ naturally comes with
a projective embedding, whose coordinates correspond to $2 \times
(n/2)$ tableaux.
It is helpful  to interpret the invariants as directed graphs on  $n$ vertices, where for each column
$\begin{array}{|c|} \hline
i\\  \hline
j\\  \hline
\end{array}$
we draw an edge $\vec{ij}$ (see \cite[\S 1.2]{hmsv2}).  In this language, there is a basis consisting of
upwards-oriented non-crossing graphs (those graphs with only edges $\vec{ij}$ with $j>i$, where when represented
with the vertices cyclically arranged around a circle, no two edges cross).  This basis is different than the one
provided by semi-standard tableaux.  As an example, Figure~\ref{fig:noncross} gives a basis for $R_1(M_8)$.
The following information is omitted to highlight the symmetries:  the vertices are
labeled cyclically $1$ through $8$ (it does not matter to us where one starts), and edges are upwards-oriented (if $i<j$,
edge $ij$ is oriented $\vec{ij}$).

\begin{figure}[ht]
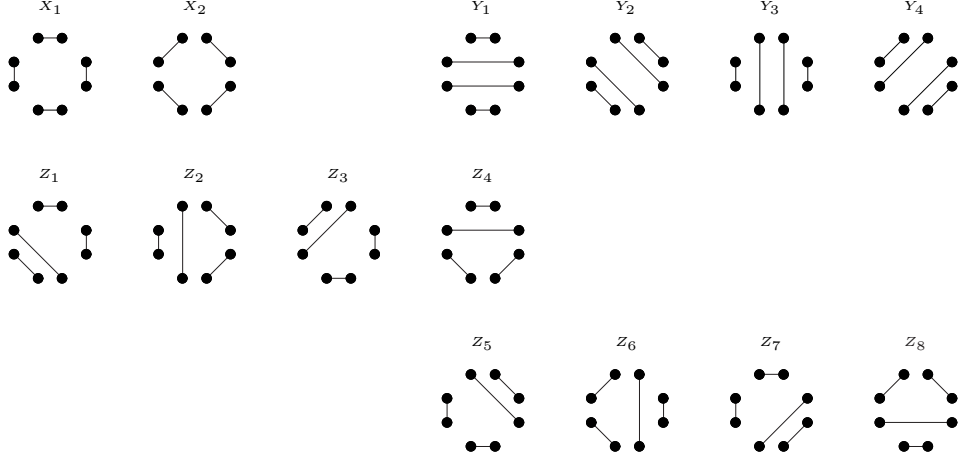

\begin{center}
\include{fourteen}
\end{center}
\caption{The fourteen non-crossing matchings on eight points.}
\label{fig:noncross}
\end{figure}

If $m$ is arbitrary, $d=1$, and $n$ is divisible by $m$, the description of the degree $1$ invariants, with its $\SS_n$
action, is precisely the usual tableaux description of the irreducible $\SS_n$-representation $V_{(n/m)^m}$.  If $n=8$
and $m=2$ or $m=4$, the corresponding representation has dimension $14$ (see Fig.~\ref{fig:noncross} for the former),
so $\dim R_1(M_8) = \dim R_1(N_8) = 14$.

If $n=8$ and $m=2$, we have the quadratic relation
\begin{displaymath}
\begin{array}{|c|c|c|c|}  \hline
1 & 3 & 5 & 7 \\ \hline 
2 & 4 & 6 & 8 \\ \hline 
\end{array} \times 
\begin{array}{|c|c|c|c|}  \hline
1 & 2 & 5 & 6 \\ \hline 
3 & 4 & 7 & 8 \\ \hline 
\end{array} = 
\begin{array}{|c|c|c|c|}  \hline
1 & 3 & 5 & 6 \\ \hline 
2 & 4 & 7 & 8 \\ \hline 
\end{array} \times
\begin{array}{|c|c|c|c|}  \hline
1 & 2 & 5 & 7 \\ \hline 
3 & 4 & 6 & 8 \\ \hline 
\end{array} \, .
\end{displaymath}
(By ``relation'' we mean the difference between the two sides is an element of $\Sym^2(R_1(M_8))$ mapping to 0 in
$R_2(M_8)$.)
This is clearly a relation:  each column appears the same number of times on each side.  All four tableaux are
semistandard, so this equation is non-zero.  This is an example of a \emph{simple binomial relation}, central to
\cite{hmsv2}.  With appropriate labelling of vertices, in terms of the variables of Figure~\ref{fig:noncross}, the
relation is
\begin{equation}
\label{e:simplebinomial}
X_2 Y_1 = Z_4 Z_8.
\end{equation}

\subsection{Representation-theoretic preliminaries: $\SS_8$-decomposition of ideals}
\label{ss:p2}

Recall that we are working over a field $\fieldk$ of characteristic
$0$ (although all statements hold over $\Z[1/8!]$).
We will repeatedly use Schur--Weyl duality:  for a vector space $W$ and a positive integer $n$, we have a canonical
decomposition
\begin{displaymath}
W^{\otimes n}=\bigoplus_{\lambda} S_{\lambda}(W) \otimes V_{\lambda},
\end{displaymath}
where the sum is over partitions $\lambda$ of $n$ and $S_{\lambda}$ denotes the Schur functor associated to $\lambda$.
This decomposition is compatible with the commuting actions of $\SS_n$ and $\GL(W)$ on each side.  If $\lambda$ has
more parts than the dimension of $W$ then $S_{\lambda}(W)=0$, so one can restrict the sum to those partitions having at
most $\dim{W}$ parts.  For such partitions, the spaces $S_{\lambda}(W)$ form mutually non-isomorphic irreducible
representations of $\GL(W)$. As an example, let $n=8$ (which will be the case throughout this paper) and let $W=\C^2$.
Let $\lambda=(a,b)$ be a partition of 8 into two parts ($b \le a$ by convention).  The $\GL(W)$-representation
$S_{\lambda}(W)$ is isomorphic to $(\det W)^b \otimes (\Sym^{a-b} W)$.  This has an $\SL(W)$ invariant if and only if
$a=b$, i.e., if $a=b=4$.  We thus see that $R_1(M_8) = (W^{\otimes 8})^{\SL(W)}$ is isomorphic to the
$\SS_8$-representation $V_{4,4}$.

The decomposition of $I_d(M_8)$ into irreducible $\SS_8$-representations may be determined as follows:
\begin{displaymath}
I_d(M_8) = \ker( \Sym^d (R_1(M_8)) \twoheadrightarrow R_d(M_8)),
\end{displaymath}
and $\Sym^d(R_1(M_8))$ may be determined from character theory (using the fact that $R_1(M_8)$ carries the
representation $V_{4,4}$), and the representation on
\begin{displaymath}
R_d(M_8) = \Gamma( (\proj^1)^8, \oh_{(\proj^1)^8}(d, \dots, d))^{\SL(2)}
\end{displaymath}
can be determined by Schur-Weyl duality. 

Similarly, information about the decomposition of $I_d(N'_8)$ into irreducible $\SS_8$-representations can be readily
determined by the map
\begin{displaymath}
I_d(N'_8) = \ker( \Sym^d (R_1(N_8)) \to R_d(N_8)).
\end{displaymath}
Caution: the map $\Sym^d (R_1(N_8)) \to R_d(N_8)$ is \emph{not} in general a surjection --- the analogue of Kempe's
theorem does not hold (see  Propositions~\ref{browncow} and~\ref{anotherbrowncow}).

The particular facts we  need are the following.  The first was proved with $8$ replaced by arbitrary even $n$ in
\cite[Prop.~6.5]{hmsv2}, but can be verified for $n=8$ as described above, or using the methods of
Proposition~\ref{p:repfacts}(a) below.

\begin{proposition}
\label{prop:decomp}
We work over a characteristic $0$ field $\fieldk$.  In the following table, each representation is multiplicity free.
The set of irreducible representations it contains corresponds to the given set of partitions.
\begin{center}
\rm
\begin{tabular}{c|c}
$\SS_8$-representation & Set of partitions of 8 \\[.5ex]
\hline \\[-2ex]
$\Sym^2(R_1(M_8))$ & at most four parts, all even \\[1ex]
$\bw{R_1(M_8)}$ & exactly four parts, all odd \\[1ex]
$R_1(M_8)^{\otimes 2}$ & union of previous two sets \\[1ex]
$R_2(M_8)$ & at most three parts, all even \\[1ex]
$I_2(M_8)$ & exactly four parts, all even
\end{tabular}\end{center}
\end{proposition}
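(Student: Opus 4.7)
The plan is to compute each row of the table by combining Schur--Weyl duality with $\SS_8$-character theory, starting from the identification $R_1(M_8)=V_{4,4}$ proved in \S\ref{ss:p1}: by Schur--Weyl on $W^{\otimes 8}$ with $W=\fieldk^2$, only $\mu=(4,4)$ admits an $\SL(W)$-invariant summand.

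For the first three rows I would compute the Kronecker square $V_{4,4}\otimes V_{4,4}$ directly: write out the character of $V_{4,4}$ on each of the $p(8)=22$ conjugacy classes of $\SS_8$ (via the Frobenius formula, or by counting weights in $W^{\otimes 8}$), square it pointwise, and extract the irreducible decomposition by inner products against the $\SS_8$ character table. The symmetric and exterior summands then separate via the classical identities
\begin{displaymath}
\chi_{\Sym^2 V}(g)=\tfrac12\bigl(\chi_V(g)^2+\chi_V(g^2)\bigr),\qquad \chi_{\bw V}(g)=\tfrac12\bigl(\chi_V(g)^2-\chi_V(g^2)\bigr).
\end{displaymath}
What has to be verified is that every multiplicity is $0$ or $1$ and that the partitions appearing are exactly those listed; the dimension identities $\dim\Sym^2 V_{4,4}=105$ and $\dim\bw V_{4,4}=91$ serve as useful sanity checks, and rows one, two, and three are handled in a single stroke.

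For $R_2(M_8)$, I would realize it as $((\Sym^2 W)^{\otimes 8})^{\SL(W)}$ and embed $(\Sym^2 W)^{\otimes 8}$ as the $(\SS_2)^8$-invariants in $W^{\otimes 16}$, where $\SS_2\wr\SS_8\subset\SS_{16}$: the eight inner copies of $\SS_2$ symmetrize pairs of tensor factors, and the outer $\SS_8$ permutes the pairs. Since $\dim W=2$, Schur--Weyl on $W^{\otimes 16}$ gives $(W^{\otimes 16})^{\SL(W)}=V_{(8,8)}$, the $\SS_{16}$-irreducible indexed by $(8,8)$, because only $\mu=(8,8)$ produces an $\SL(W)$-invariant in any $S_\mu(W)$. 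Consequently
\begin{displaymath}
R_2(M_8) \;=\; \bigl(V_{(8,8)}\bigr)^{(\SS_2)^8}
\end{displaymath}
as $\SS_8$-modules, with the $\SS_8$-action coming from the outer factor of the wreath product. Its decomposition is then a character computation: evaluate $\chi_{V_{(8,8)}}$ on the $\SS_{16}$-classes meeting $\SS_2\wr\SS_8$ and average over $(\SS_2)^8$ to produce an $\SS_8$-character. I expect this wreath-product manipulation, combined with checking multiplicity-freeness and matching the stated list (at most three parts, all even; total dimension $91$), to be the main combinatorial obstacle.

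Row five is then formal: Kempe's theorem (\cite[Thm.~2.3]{hmsv1}) makes $\Sym^\bullet R_1(M_8)\to R_\bullet(M_8)$ surjective, so
\begin{displaymath}
0\to I_2(M_8)\to \Sym^2 R_1(M_8)\to R_2(M_8)\to 0
\end{displaymath}
is exact. Subtracting $\SS_8$-characters gives $I_2(M_8)$ as the sum of the $V_\lambda$ appearing in $\Sym^2 R_1(M_8)$ but not in $R_2(M_8)$. Comparing the outputs of rows one and four, the difference is exactly the partitions of $8$ with four parts all even --- in fact only $(2,2,2,2)$, so $I_2(M_8)=V_{2,2,2,2}$, the promised $14$-dimensional space of quadratic relations --- and multiplicity-freeness is inherited from $\Sym^2 R_1(M_8)$.
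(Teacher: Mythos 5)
Your plan is sound and is, in spirit, exactly what the paper's remark before the proposition suggests ("can be verified for $n=8$ as described above"): decompose $\Sym^2 V_{4,4}$ and $\bw V_{4,4}$ by $\SS_8$-character theory, compute $R_2(M_8)$ via Schur--Weyl, and read off $I_2$ from the short exact sequence (with surjectivity of $\Sym^2 R_1 \to R_2$ coming from Kempe's theorem, as you correctly note). The paper itself does not spell this out but rather defers to the general statement for even $n$ in [HMSV4, Prop.~6.5]. The one genuine point of departure is row four: the paper's sketch reads ``$R_d(M_8)$ can be determined by Schur--Weyl duality,'' which most directly means decomposing $(\Sym^2 W)^{\otimes 8}=\bigoplus_{\lambda\vdash 8}S_\lambda(\Sym^2 W)\otimes V_\lambda$ and extracting $\SL(2)$-invariants from the plethysms $S_\lambda(\Sym^2 \fieldk^2)$, whereas you sidestep the plethysms by embedding $(\Sym^2 W)^{\otimes 8}$ as the $(\SS_2)^8$-invariants of $W^{\otimes 16}$, so that Schur--Weyl applies trivially ($(W^{\otimes 16})^{\SL(W)}\cong V_{(8,8)}$) and the remaining work is a restriction-and-invariants computation for $\SS_2\wr\SS_8\subset\SS_{16}$. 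Both are correct and comparable in effort; the plethysm route leans on classical formulas (Littlewood's $\Sym^\bullet$-plethysms for $\GL_2$) while yours replaces plethysm with a single branching computation, which some may find more transparent for verifying multiplicity-freeness. Your dimension checks ($\dim\Sym^2 V_{4,4}=105$, $\dim\bw V_{4,4}=91$, $\dim R_2(M_8)=91$ from the Hilbert series, $\dim I_2=14$) are all consistent with the stated table.
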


As described in the introduction, it has been checked by brute force computer calculation (by Maclagan, Koike, and
Freitag and Salvati Manni) that the quadratic relations generate the ideal of relations, and a pure thought proof is
given here (see Corollary~\ref{generation}).

\begin{proposition}
\label{p:repfacts}
We work over a characteristic $0$ field $\fieldk$.  All statements refer to $\SS_8$-representations.
\begin{enumerate}
\item[(a)] {\em ``The skew cubic relation for $M_8$.''}
Up to scalar, there is a single skew-invariant in $\Sym^3 R_1(M_8)$, and it is a relation, i.e., it lies in
\begin{displaymath}
I_3(M_8) = \ker \left( \Sym^3 R_1(M_8) \twoheadrightarrow R_3(M_8) \right).
\end{displaymath}
\item[(b)] {\em ``The fourteen quadratic relations for $M_8$.''}
The degree $2$ part of the ideal of $M_8$ is a single representation of type $V_{2,2,2,2}$:
\begin{displaymath}
I_2(M_8) = \ker \left( \Sym^2 R_1(M_8) \twoheadrightarrow R_2(M_8) \right) \cong V_{2,2,2,2}.
\end{displaymath}
\item [(c)] {\em ``The skew quintic relation for $N'_8$.''}
There is a non-zero skew-invariant  relation in $\Sym^5 R_1(N_8)$ vanishing on $N_8$, i.e.,
\begin{displaymath}
I_5(N'_8) = \ker \left( \Sym^5 R_1(N'_8) \rightarrow R_5(N'_8) \right)
\end{displaymath}
contains a skew-quintic.
\item[(d)] {\em ``The fourteen quartic relations for $N'_8$.''}
There is a representation of type $V_{4,4}$ in the degree $4$ part of the ideal of $N'_8$, i.e., in 
\begin{displaymath}
I_4(N'_8) = \ker \left( \Sym^4 R_1(N'_8) \rightarrow \Sym^4 (N'_8) \right).
\end{displaymath}
\end{enumerate}
\end{proposition}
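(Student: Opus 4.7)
The plan is to prove all four parts by the same $\SS_8$-character-theoretic device. In each case there is a natural surjection $\Sym^k R_1 \twoheadrightarrow R_k$, and I locate the claimed irreducible summand of $I_k$ by verifying that its multiplicity in $\Sym^k R_1$ strictly exceeds (or, for part (a), equals) its multiplicity in $R_k$, forcing at least one copy into the kernel. Both sides are computable: the source by the classical formula expressing the character of $\Sym^k V$ in terms of power sums of $\chi_V$, applied to $V = V_{4,4}$ or $V = V_{2,2,2,2}$ (and using $\chi_{V_{2,2,2,2}} = \chi_{V_{4,4}} \cdot \sgn$), and the target by Schur--Weyl duality, which yields
\begin{displaymath}
R_k(M_8) = \bigoplus_{\lambda \vdash 8} S_\lambda(\Sym^k \C^2)^{\SL(2)} \otimes V_\lambda,
\end{displaymath}
and similarly for $R_k(N_8)$ with $\C^4$ in place of $\C^2$.

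For part (a) I compute $[\sgn : \Sym^3 V_{4,4}]$ by expanding in power sums of $\chi_{V_{4,4}}$; I expect the answer to be $1$. The multiplicity $[\sgn : R_3(M_8)]$ equals $\dim S_{1^8}(\Sym^3 \C^2)^{\SL(2)} = \dim(\bigwedge^8 \Sym^3 \C^2)^{\SL(2)}$, but $\dim \Sym^3 \C^2 = 4 < 8$, so this wedge power vanishes even before taking invariants. Hence the unique skew invariant in $\Sym^3 V_{4,4}$ must lie in $I_3(M_8)$, and is the desired skew cubic. Part (b) follows directly from Proposition~\ref{prop:decomp}, which forces $I_2(M_8)$ to be a multiplicity-free sum of $V_\lambda$'s with $\lambda$ having exactly four even parts; the only such partition of $8$ is $(2,2,2,2)$, so $I_2(M_8)$ is either $0$ or $V_{2,2,2,2}$, and the explicit simple binomial relation \eqref{e:simplebinomial} provides a nonzero element, giving $I_2(M_8) \cong V_{2,2,2,2}$.

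Parts (c) and (d) are parallel applications of the same template but now over $\C^4$. For (c), I compare $[\sgn : \Sym^5 V_{2,2,2,2}]$ with $[\sgn : R_5(N_8)] = \dim(\bigwedge^8 \Sym^5 \C^4)^{\SL(4)}$; since $R_\bullet(N'_8)$ is by definition the degree-$1$ subalgebra of $R_\bullet(N_8)$, we have $I_5(N'_8) = \ker(\Sym^5 R_1(N_8) \to R_5(N_8))$, so any strict inequality of multiplicities forces a skew quintic into $I_5(N'_8)$. For (d), the corresponding inequality $[V_{4,4} : \Sym^4 V_{2,2,2,2}] > [V_{4,4} : R_4(N_8)]$ produces a $V_{4,4}$ in $I_4(N'_8)$.

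The main obstacle is pure bookkeeping: evaluating $\SS_8$ inner products over the $22$ conjugacy classes of $\SS_8$ using high symmetric powers of the $14$-dimensional characters $\chi_{V_{4,4}}, \chi_{V_{2,2,2,2}}$, and computing $\SL(4)$-invariant dimensions inside Schur functors of the $56$-dimensional space $\Sym^5 \C^4$. Each step is mechanical once the Schur--Weyl framework is in place; the payoff is that the qualitative conclusions, namely the existence of the specific skew or $V_{4,4}$ components in the ideals, drop out without having to write the relations down explicitly.
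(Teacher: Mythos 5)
Your proposal is correct and follows essentially the same route as the paper's proof: character theory for multiplicities in $\Sym^k R_1$, Schur--Weyl duality for $R_k$, and the vanishing of $\bigwedge^8(\Sym^3 \C^2)$ for part~(a). The paper, like you, reduces~(b) to Proposition~\ref{prop:decomp}, and handles (c),(d) by exactly the multiplicity comparison you describe (it records the numbers as $4 > 3$ for $\sgn$ in degree 5 and $7 > 6$ for $V_{4,4}$ in degree 4, and is careful to observe, as your argument implicitly accommodates, that $\Sym^\bullet R_1(N'_8) \to R_\bullet(N_8)$ is not surjective so one only obtains bounds, which is enough).
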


We will verify uniqueness in (c) in
Proposition~\ref{uniqueskewquintic}:  there is a one non-zero skew quintic
relation up to scalar.
We will verify uniqueness in (d) in Corollary~\ref{cor:I4}:  there
is precisely one representation of type $V_{4,4}$ in $I_4(N'_8)$.  

\begin{proof}
To prove (a), first verify that $\Sym^3 R_1(M_8)$ has a single $\sgn$ component by character theory.\cut{As alternative:  we need to show that there is at least one; then to show that there is only one.  To show
      there is at least one, refer to the argument given in the original paper. To show there is only one, use the
      fact that this is the cubic hypersurface.  Or the only one with these 14 derivatives.  I want to think
      this through.}
Then note that $R_3(M_8)$ has no $\sgn$ component: $(\Sym^3( \fieldk^2))^{\otimes 8}$ has no $\sgn$ component because
by Schur-Weyl duality it contains no $\SS_8$-representation with more than $4 = \dim (\Sym^3(\fieldk^2))$ rows.
(Alternatively, as in \cite[\S 2]{hmsvCR}, use the fact that the
Vandermonde has too high degree.
As another alternative,
 an explicit formula for this skew invariant is given in
Remark~\ref{r:explicit}.)

Part (b) follows from Proposition~\ref{prop:decomp}.  (Alternatively, use the method of part (a).)

Parts (c) and (d) follow from comparing the appropriate representations in $\Sym^d( R_1(N'_8))$ and $R_d(N'_8)$ for
$d=4,5$, using Schur-Weyl duality for $R_d(N_8)$.  (The sign representation $\sgn$ appears with multiplicity $4$ in
$\Sym^5(R_1(N'_8))$, but multiplicity $3$ in $R_5(N_8)$.  The representation $V_{4,4}$ appears with multiplicity $7$
in $\Sym^4(R_1(N'_8))$, but multiplicity $6$ in $R_4(N_8)$.)  Note that we only get bounds on the multiplicities
since $\Sym^{\bullet}(R_1(N'_8))$ does not surject onto $R_{\bullet}(N_8)$.
\end{proof}

\section{The web of relationships between $M_8$ and $N_8$, via the skew cubic $\cubic$  and the skew quintic $\quintic$}
\label{s:dualities}

\subsection{The skew cubic relation $\cubic$}
\label{s:threeone}

Let $\cubic$ be the skew-invariant cubic of Proposition~\ref{p:repfacts}(a) (which is unique up to scalar).  We also
denote the corresponding hypersurface in $\proj(R_1(M_8)^*)$ by $\cubic$.  An element $\lambda$ of $R_1(M_8)^*$
induces a derivation on the ring $\Sym(R_1(M_8))$, which we denote by $\partial/\partial \lambda$ (we think of it
as taking a partial derivative).  We have a map
\begin{displaymath}
R_1(M_8)^* \otimes \sgn \to \Sym^2(R_1(M_8)), \qquad \lambda \mapsto \frac{\partial \cubic}{\partial \lambda}
\end{displaymath}
which is $\SS_8$-equivariant.  The image is an irreducible representation of type $V_{4,4} \otimes \sgn
=V_{2,2,2,2}$, and is therefore equal to $I_2(M_8)$ by Proposition~\ref{prop:decomp}; in other words, the above map
furnishes a natural isomorphism
\begin{displaymath}
R_1(M_8)^* \otimes \sgn \to I_2(M_8).
\end{displaymath}
``The'' 14 quadrics are the image of the basis of $R_1(M_8)^*$ dual to that of $R_1(M_8)$ given by the 14 planar
graphs.
The above discussion shows that the partial derivatives of $\cubic$ all vanish on $M_8$.  Furthermore, the simple
binomial relations necessarily span the same irreducible representation --- by Proposition~\ref{prop:decomp}, the
quadratic relations form an irreducible $\SS_8$-representation.

Thus the fivefold $M_8$ is contained in the singular locus of $\cubic$.  (As described in \S \ref{mainconstructions},
$M_8$ {\em is} the singular locus of $\cubic$:  we establish this in Corollary~\ref{generation}, though it also
follows by the computer calculations of Maclagan, Koike, and
Freitag and Salvati Manni, or by those of  \cite[Prop.~2.10]{hmsv1}.  We will not need this fact in this section.)

\begin{remark}
\label{r:explicit}
One can describe the cubic explicitly, in terms of the variables of Figure~\ref{fig:noncross}:
\begin{eqnarray*}
\cubic & = & X_1X_2(X_1+X_2) +
X_1X_2(Z_1+Z_2+Z_3+Z_4+Z_5+Z_6+Z_7+Z_8) \\ & & -
(X_1Y_2Y_4 + X_2Y_3Y_1) 
+(X_1Z_2Z_6 + X_2Z_3Z_7 + X_1Z_4Z_8 + X_2Z_5Z_1)
\\ & & +(Y_1Z_2Z_6+Y_2Z_3Z_7+Y_3Z_4Z_8+Y_4Z_5Z_1) 
-(Z_1Z_2Z_3 + Z_2Z_3Z_4 \\ & & + Z_3Z_4Z_5 + Z_4Z_5Z_6 + Z_5Z_6Z_7 +Z_6Z_7Z_8+
Z_7Z_8Z_1+Z_8Z_1Z_2).
\end{eqnarray*}
(One can verify directly that $\SS_8$ acts on the expression above via the $\sgn$ representation as follows.
Cyclically rotating the labels on the eight vertices of the graphs of Figure~\ref{fig:noncross} clearly
changes the sign of $\cubic$.  One readily checks by hand that swapping two chosen adjacent labels changes
the sign of $\cubic$, using the Pl\"ucker relations once.) The connection to the simple binomials is quite
explicit.  For example, $\frac {\partial \cubic } { \partial Y_3} = - X_2 Y_1 + Z_4 Z_8$ is the  simple
binomial relation \eqref{e:simplebinomial}. 
\end{remark}

\begin{remark}
One can also describe the cubic conceptually:  it it the sum of the cubes of the 105 matchings on 8 points, each
weighted by a sign in a systematic manner.  Equivalently, it is the skew-average of the cube of any matching.  These
constructions clearly give skew-invariant cubics,  but  it is
non-trivial to show that they are non-zero.  Details are given in \cite[Prop.~3.1]{hmsve}.\label{march10}
\end{remark}

\begin{proposition}
\label{p:gregsmith}
Suppose the ground field $\fieldk$ is $\Q$.  Let $\hessian_{\cubic}$ be the Hessian  of $\cubic$ (the determinant of
the $14 \times 14$ Hessian matrix, degree $14$).  Then (the scheme corresponding to)
$\hessian_{\cubic}$ does not contain $\cubic$ (so $\deg (\hessian_{\cubic} \cap \cubic) = 42$), and the irreducible
components of $\hessian_{\cubic} \cap \cubic$ have degree $21$ or $42$.
\end{proposition}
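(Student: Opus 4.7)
The plan is computational, organized so that the $\SS_8$-symmetry does most of the bookkeeping. I would carry it out in three steps.

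First, establish the non-containment $\cubic \not\subset \hessian_\cubic$, from which Bezout in $\proj^{13}$ gives $\deg(\hessian_\cubic \cap \cubic) = 3\cdot 14 = 42$. Using the explicit form of $\cubic$ in Remark~\ref{r:explicit}, one writes down the $14\times 14$ matrix of second partials of $\cubic$ and exhibits a single point of $\cubic$ at which its determinant is nonzero. A good candidate is a point supported on only a few of the non-crossing-matching coordinates of Figure~\ref{fig:noncross}, where both $\cubic$ and the Hessian matrix can be evaluated by hand or by one substitution in a CAS. No further work is needed for the Bezout count.

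Second, I would use the $\SS_8$-action to organize the intersection. The standard identity $H(f\circ A)=(\det A)^2\,(H(f)\circ A)$, combined with the fact that $\cubic$ transforms by $\sgn$, shows that $\hessian_\cubic$ is $\SS_8$-\emph{invariant} as a polynomial: the sign character coming from $\cubic$ is squared under taking the Hessian, and $(\det\rho)^2$ is trivial because every character of $\SS_8$ has order dividing $2$. Consequently $\hessian_\cubic\cap\cubic$ is $\SS_8$-stable as a scheme; its irreducible components are permuted by $\SS_8$, and components in a common orbit share a common degree. In particular, the numerology to verify is compatible with options such as two degree-$21$ components, one degree-$21$ component with multiplicity $2$, or a single degree-$42$ component.

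Third, I would compute the primary decomposition of $(\cubic,\hessian_\cubic)$ in a computer algebra system such as Macaulay2, using equivariance both to predict $\SS_8$-stable candidate components in advance and to speed up the calculation. Natural candidate loci come from the geometry: $M_8$ itself, $\Sec(M_8)$, parabolic loci of $\cubic$ along $M_8$, and loci coming from the combinatorics of non-crossing matchings. Each candidate is tested for containment via a saturation. The main obstacle is the sheer cost of a primary decomposition in $14$ variables of an ideal generated by a cubic and a degree-$14$ determinant; the practical reduction is to identify the symmetric components first from geometry and then verify, by saturating them out, that no residual components remain. One then reads off the degree of each component and confirms it is $21$ or $42$, with total multiplicities summing to the Bezout count $42$. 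This is the computation for which the authors credit Greg Smith.
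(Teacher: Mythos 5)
Your outline is conceptually sound, and your Step 2 observation is correct and worth recording: since $H(f\circ A)=(\det A)^2\,H(f)\circ A$ and $H(cf)=c^{14}H(f)$, the Hessian of a form transforming by $\sgn$ is genuinely $\SS_8$-invariant (both $\sgn^{14}$ and $(\det\rho_{V_{4,4}})^2$ are trivial). But your plan does not match the paper, and the place where it diverges is precisely the place you flag as "the main obstacle" without resolving it.

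The paper does not attempt a primary decomposition, saturation, or any component-by-component analysis of $(\cubic,\hessian_\cubic)$ in $\proj^{13}$. Instead it exploits the fact that $\hessian_\cubic\cap\cubic$ has codimension $2$: one slices with a single, well-chosen $\proj^2\subset\proj^{13}$ over $\Q$, so that the whole intersection becomes a zero-dimensional scheme of length $42$ in a plane. That is a trivial computation (a cubic curve meeting a degree-$14$ curve in $\proj^2$), and the observed output -- a single $\Q$-irreducible degree-$21$ point-scheme appearing with multiplicity $2$ -- immediately gives both the non-containment and the degree constraint: any $\Q$-irreducible codimension-$2$ component of $\hessian_\cubic\cap\cubic$ meets the plane in a nonempty Galois-stable subset of that single size-$21$ Galois orbit, hence has degree at least $21$, hence degree $21$ or $42$ since the total is $42$. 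Your $\SS_8$-equivariance argument, by contrast, only tells you the components fall into orbits of equal degree; it does not rule out, say, seven components of degree $6$, so it cannot substitute for the slice. And a primary decomposition (or even a sequence of saturations) for an ideal generated by a cubic and a degree-$14$ determinant in $14$ variables is far out of reach computationally, whereas the plane-slice version is a few lines of Macaulay2. The slice trick is the essential missing idea in your plan.
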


This proof uses  the only computer calculation we need in \S \ref{s:dualities}.  The calculation makes
essential use of the fact that $\fieldk$ is $\Q$.

We will later (Corollary~\ref{c:21} and Proposition~\ref{p:H2}) deduce that $\hessian_{\cubic}$ meets $\cubic$ along an irreducible subvariety
($\Sec (M_8)$) of degree $21$, with multiplicity $2$, and that this holds over {\em any} field $\fieldk$ of
characteristic $0$.

\begin{proof}
We choose a suitable plane $\proj^2 \subset \proj^{13}$ over $\Q$, and observe by computer that the intersection of
$\hessian_{\cubic} \cap \proj^2$ with $\cubic \cap \proj^2$ is an irreducible degree $21$ (dimension $0$) subscheme,
appearing  with multiplicity $2$.  (Short Macaulay2 code is given at \cite{hmsvcode}.)  The result follows.
\end{proof}

\subsection{The (projective) dual map from $\cubic$ contracts $\Sec(M_8)$}

The cubic $\cubic$ is naturally a subscheme of $\proj(R_1(M_8)^*)$.
The dual map $D: \cubic \dashrightarrow \proj(R_1(M_8))$ (sending a smooth point of $\cubic$ to its tangent space)
is the rational map corresponding to the map on rings $\Sym(R_1(M_8)^*) \to \Sym(R_1(M_8))/(\cubic)$ which maps
$\lambda \in R_1(M_8)^*$ to $\frac{\partial \cubic}{\partial \lambda}$.  The two representations $R_1(M_8)$ and
$N_1(M_8)^*$ of $\SS_8$ differ by the sign character.  We therefore have a \emph{canonical} isomorphism
$\proj(R_1(M_8))=\proj(R_1(N_8)^*)$.  We regard $D$ as mapping to $\proj(R_1(N_8)^*)$.
Note that $D$ blows up the singular locus of $\cubic$, which includes $M_8$.

The dual to the cubic $\cubic$ is a hypersurface:  $\hessian_{\cubic} \cap \cubic \neq \cubic$ by
Proposition~\ref{p:gregsmith} (this can also be checked  easily by hand), so $\cubic$ is not contracted by the dual
map.

As argued in \S \ref{mainconstructions}, Bezout's theorem implies that $\Sec(M_8) \subset \cubic$, and every secant
line to $M_8$ is contracted by the dual map, so $\Sec(M_8)$ is contained in the exceptional divisor of the dual map
$D: \cubic \dashrightarrow \proj(R_1(N_8)^*)$.  Note that the construction of $N_8$ gives a map 
$N_8 \to \proj(R_1(N_8)^*)$.  The image is $N'_8$ (by the definition of $N'_8$).

\begin{theorem}
\label{t:cooltheorem}
Under the duality map $D$, the space $\Sec(M_8)$ maps dominantly to $N'_8$.
\end{theorem}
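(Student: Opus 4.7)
The plan is to construct a Segre-type rational map $\phi: M_8 \times M_8 \dashrightarrow N_8$, show it is dominant onto $N_8$, and identify the composite $\bar\phi: M_8 \times M_8 \dashrightarrow N'_8$ with $D \circ \chi$, where $\chi: M_8 \times M_8 \dashrightarrow \Sec(M_8)$ sends $(p,q)$ to any chosen point of the chord $\overline{pq}$. Since $D$ contracts each such chord to a point (as observed just before the theorem), $D \circ \chi$ is a well-defined rational map to $\proj R_1(N_8)^*$; agreement with $\bar\phi$ will then place its image in $N'_8$, and dominance of $\bar\phi$ will yield the desired dominance of $D|_{\Sec(M_8)}$.

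To construct $\phi$, represent $(p,q) \in M_8 \times M_8$ by $8$-tuples $(p_i)$, $(q_i)$ on $\proj^1$, form $((p_i, q_i))_{i=1}^8 \in (\proj^1 \times \proj^1)^8$, and Segre-embed as $8$ points of $\proj^3$ lying on a smooth quadric, then pass to $N_8$; the Segre map is $\SL(2) \times \SL(2)$-equivariant under the natural inclusion into $\SL(4)$, so this descends to GIT quotients on a dense open. For dominance, note that $8$ points in general position in $\proj^3$ impose independent conditions on the $10$-dimensional space of quadrics and so lie on a pencil; a generic member of this pencil is smooth and hence isomorphic to $\proj^1 \times \proj^1$, recovering the factorization and exhibiting any general point of $N_8$ in the image of $\phi$. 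The fiber dimension $\dim(M_8 \times M_8) - \dim N_8 = 10 - 9 = 1$ matches the pencil.

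The identification $\bar\phi = D \circ \chi$ is a Schur's-lemma argument. Pulling back the $14$ degree-one $\SL(4)$-invariants on $(\proj^3)^8$ along Segre yields an $\SS_8$-equivariant linear map $\psi_\phi: R_1(N_8) \to R_1(M_8) \otimes R_1(M_8)$; a single $4 \times 4$ bracket of Segre points is a manifestly nonzero bilinear form, so $\psi_\phi \neq 0$. On the other hand, the $14$ partial derivatives of $\cubic$ assemble into an $\SS_8$-equivariant map $\psi_D: R_1(N_8) \to \Sym^2 R_1(M_8)$ whose image is $I_2(M_8) \cong V_{2,2,2,2}$ by Proposition~\ref{p:repfacts}(b). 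By Proposition~\ref{prop:decomp}, the irreducible $V_{2,2,2,2}$ appears with multiplicity one in $\Sym^2 R_1(M_8)$ and is absent from $\bw R_1(M_8)$; therefore any $\SS_8$-equivariant map $R_1(N_8) \to R_1(M_8) \otimes R_1(M_8)$ automatically lands in $\Sym^2 R_1(M_8)$ and is determined up to a single scalar. Hence $\psi_\phi$ and $\psi_D$ are proportional.

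Finally, I would translate this algebraic identity back into geometry. For $v = ap + bq$ on the chord $\overline{pq}$, each partial $f_i = \psi_D(v_i)$ vanishes at $p$ and $q$ (as $M_8 \subseteq \Sing(\cubic)$), so $f_i(v) = 2ab \cdot B_i(p,q)$ where $B_i$ is the symmetric bilinear form associated to the quadratic form $f_i$. The coordinates of $D([v])$ in $\proj R_1(N_8)^*$ are therefore $(B_i(p,q))_i$, while those of $\phi(p,q)$ (pushed to $N'_8$) are $(\psi_\phi(v_i)(p,q))_i$; proportionality of $\psi_\phi$ and $\psi_D$ yields $D \circ \chi = \bar\phi$ as rational maps to $\proj R_1(N_8)^*$, and the image lies in $N'_8$ by construction of $\phi$. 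The main obstacle is controlling the Segre pullback $\psi_\phi$ --- its nonvanishing and its landing in the symmetric rather than the alternating square; the representation-theoretic constraint from Proposition~\ref{prop:decomp} handles the symmetry for free, and nonvanishing reduces to a single transparent bracket computation.
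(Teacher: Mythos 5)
Your proposal is correct and follows essentially the same route as the paper: both constructions introduce the Segre rational map $M_8 \times M_8 \dashrightarrow N_8$, establish its dominance via the pencil of quadrics through eight general points of $\proj^3$ and transitivity of $\SL(4)$ on smooth quadrics, and then invoke Schur's lemma using the multiplicity-one occurrence of $V_{2,2,2,2}$ in (the degree-two piece of) $R_{\bullet}(M_8) \otimes R_{\bullet}(M_8)$ from Proposition~\ref{prop:decomp} to identify the Segre pullback with the map given by the partial derivatives of $\cubic$, up to a nonzero scalar. The only cosmetic difference is that the paper packages the last step as commutativity of a square of ring maps (using the canonical addition map $\Cone(M_8)^2 \to R_1(M_8)^*$ in place of your chosen-chord-point map $\chi$, and getting the polarization identity $f_i(ap+bq)=2ab\,B_i(p,q)$ for free from that), whereas you spell out the polarization explicitly; the substance is identical.
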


Before proving the theorem, we introduce an auxiliary map and establish a few of its properties.  The Segre map gives
an embedding $(\P^1)^8 \times (\P^1)^8 \to (\P^3)^8$, which descends to a rational map $\sigma:M_8 \times M_8 \dashrightarrow N_8$.
(We only get a rational map since a pair of stable points in $(\P^1)^8$ need not map to a stable point of
$(\P^3)^8$.)  The following two lemmas give the properties of this map that we need.

\begin{lemma}
The map $\sigma$ is dominant.  
\end{lemma}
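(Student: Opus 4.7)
My plan to show $\sigma : M_8 \times M_8 \dashrightarrow N_8$ is dominant is to exhibit an explicit rational section over a dense open subset of $N_8$. First the dimension count: $\dim(M_8 \times M_8) = 10$ and $\dim N_8 = 8 \cdot 3 - \dim \SL(4) = 9$, so dominance is dimensionally plausible with expected generic fibre dimension~$1$.

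The classical input I would use is that eight general points of $\proj^3$ lie on a pencil of quadrics, a general member of which is smooth. Concretely, $\dim |\mathcal{O}_{\proj^3}(2)| = 9$, and eight points in linearly general position impose independent conditions on quadrics, so the quadrics through them form a pencil $\proj^1 \subset |\mathcal{O}_{\proj^3}(2)|$. The singular quadrics cut out a degree-$4$ (determinantal) hypersurface in the linear system, so a general member $Q$ of the pencil is smooth; any smooth quadric in $\proj^3$ is isomorphic to $\proj^1 \times \proj^1$ via the Segre embedding, equipped with two rulings $\pi_1, \pi_2 : Q \to \proj^1$. Applied to a general $[p_1, \ldots, p_8] \in N_8$, this construction produces an ordered pair $\bigl((\pi_1(p_i))_{i=1}^8, (\pi_2(p_i))_{i=1}^8\bigr) \in (\proj^1)^8 \times (\proj^1)^8$, which descends to a point of $M_8 \times M_8$ mapping to $[p_1, \ldots, p_8]$ under $\sigma$ by construction.

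The outstanding verifications are (i)~GIT-stability of the eight points in $\proj^3$ (immediate for a general point of $N_8$); (ii)~GIT-stability of the two $8$-tuples on $\proj^1$ produced by the construction; and (iii)~that the preimage depends algebraically on $[p_1,\ldots,p_8]$, so that a dense open (rather than merely isolated) preimage is obtained. The main technical subtlety is (ii): one needs no ruling of $Q$ to contain four or more of the $p_i$, so that each $\pi_k$-projection has all multiplicities at most $2$ and is therefore $\SL(2)$-stable. All three conditions are open on configurations of eight points in $\proj^3$, so they hold on a dense open of $N_8$ provided they hold at a single configuration, which is easily produced. Dominance then follows, since we have exhibited a concrete preimage under $\sigma$ over a dense open subset of $N_8$.
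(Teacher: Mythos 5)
Your proposal is correct and is essentially the paper's own argument: both take a general point of $N_8$, use the pencil of quadrics through the eight points and the smoothness of a general member, identify that smooth quadric with $\P^1 \times \P^1$ (the paper does this by moving the points onto the standard Segre quadric via the transitive $\SL(4)$-action), and read off a preimage under $\sigma$ from the two rulings. Your extra care about semistability of the two projected $8$-tuples and about algebraicity is fine but not needed beyond what the paper does, since exhibiting a preimage of a general point already gives dominance.
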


\begin{proof}
We may assume $\fieldk$ is algebraically closed.
Let $x$ be a general point of $N_8$, which we regard as 8 general points $x_1, \ldots, x_8$ in $\P^3$.  Through these 8
points passes a one parameter family of quadrics (since the space of quadrics in $\P^3$ is 9 dimensional), a
generic member $Q$ of which is smooth.  The group $\SL(4)$ acts
transitively on the smooth quadrics in $\P^3$ --- this is equivalent to the fact that any two non-degenerate quadratic
forms on $\fieldk^4$ are equivalent.  Thus after moving $x_1, \ldots, x_8$ by an element of $\SL(4)$ (which does not
affect $x$), we can assume that these 8 points lie on the image of the Segre map $\P^1 \times \P^1 \to \P^3$.  Thus
each $x_i$ gives rise to a point $(y_i, y_i')$ on $\P^1 \times \P^1$, and so we get two points $y=(y_1,\ldots,y_8)$
and $y'=(y_1',\ldots,y'_8)$ on $(\P^1)^8$.  It is clear that $\sigma(y,y')=x$, which proves the lemma.
\end{proof}

For two graded rings $A_{\bullet}$ and $B_{\bullet}$, we write
$A_{\bullet} \boxtimes B_{\bullet}$ for the projective
coordinate ring of $\Proj(A_{\bullet}) \times_{\fieldk}
\Proj(B_{\bullet})$ ---  the graded ring
whose degree $n$ piece is $A_n \otimes_{\fieldk} B_n$. 

\begin{lemma}
The map $\sigma$ is induced from a map $\sigma^*:R_{\bullet}(N_8) \to R_{\bullet}(M_8) \boxtimes R_{\bullet}(M_8)$
of graded rings.
\end{lemma}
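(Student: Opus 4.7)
The plan is to obtain $\sigma^*$ by pulling back $\SL(4)$-invariant sections along the (equivariant) Segre embedding at the level of total spaces, then verify that the induced rational map on Proj agrees with $\sigma$.

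First I would record the key equivariance: the Segre embedding $\P^1\times\P^1\hookrightarrow\P^3$ is equivariant with respect to the natural homomorphism $\SL(2)\times\SL(2)\to\SL(4)$ (coming from the tensor product of the defining representations, which factors through $\SO(4)$), and it pulls $\mc{O}_{\P^3}(1)$ back to $\mc{O}_{\P^1\times\P^1}(1,1)$. Taking the eightfold product, the map
\[
s:(\P^1)^8\times(\P^1)^8\longrightarrow(\P^3)^8
\]
is equivariant for $\SL(2)\times\SL(2)$ (acting diagonally on each factor of the source) and $\SL(4)$ (acting diagonally on the target), and
\[
s^*\bigl(\mc{O}_{(\P^3)^8}(k,\dots,k)\bigr)\;\cong\;\mc{O}_{(\P^1)^8}(k,\dots,k)\boxtimes\mc{O}_{(\P^1)^8}(k,\dots,k).
\]

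Second, pulling back sections and taking the appropriate invariants gives, for each $k$, a $\fieldk$-linear map
\[
R_k(N_8)\;=\;\Gamma\bigl((\P^3)^8,\mc{O}(k,\dots,k)\bigr)^{\SL(4)}\longrightarrow\Gamma\bigl((\P^1)^8,\mc{O}(k,\dots,k)\bigr)^{\SL(2)}\otimes\Gamma\bigl((\P^1)^8,\mc{O}(k,\dots,k)\bigr)^{\SL(2)},
\]
where I use that an $\SL(4)$-invariant section restricts to an $\SL(2)\times\SL(2)$-invariant one, and that the $\SL(2)\times\SL(2)$-invariants of a box product decompose as the tensor product of $\SL(2)$-invariants of each factor. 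The right side is precisely $(R_\bullet(M_8)\boxtimes R_\bullet(M_8))_k$. Assembling these over all $k$ gives the graded ring homomorphism $\sigma^*$.

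Third, I would check that the rational map on Proj induced by $\sigma^*$ agrees with $\sigma$ on its domain of definition. Concretely, at any point $(y,y')\in (\P^1)^8\times(\P^1)^8$ that is semistable and whose image $s(y,y')\in(\P^3)^8$ is also semistable, evaluating a generator of $R_k(N_8)$ and then restricting equals pulling back first and then evaluating, essentially by the definition of pullback of sections. Pushing down to the GIT quotients, this says the map of projective schemes determined by $\sigma^*$ carries the class of $(y,y')$ in $M_8\times M_8$ to the class of $s(y,y')$ in $N_8$, which is exactly $\sigma(y,y')$.

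The only mild obstacle is the fact that $\sigma$ is merely rational: some semistable $(y,y')$ have image $s(y,y')$ that is unstable, so $\sigma^*$ is not automatically surjective and the induced map is defined only on the complement of the base locus cut out by the image of $\sigma^*$. Matching this base locus with the indeterminacy locus of $\sigma$ reduces to the standard observation that both coincide with the preimage in $M_8\times M_8$ of the unstable locus in $(\P^3)^8$ under $s$, so there is no real content beyond GIT compatibility with the Segre.
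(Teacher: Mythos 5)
Your proposal is correct and follows essentially the same path as the paper: both proofs obtain $\sigma^*$ by pulling back along the equivariant Segre map and invoking the homomorphism $\SL(2)\times\SL(2)\to\SL(4)$. The only difference is cosmetic — the paper phrases the two rings as semi-invariant polynomial functions on the affine cones $(\fieldk^2)^8\times(\fieldk^2)^8$ and $(\fieldk^4)^8$, so the pullback is just composition with the affine lift of the Segre map, whereas you work with sections of line bundles and then invoke the K\"unneth isomorphism plus the identity $(A\otimes B)^{G\times H}=A^G\otimes B^H$; these are equivalent bookkeeping choices.
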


\begin{proof}
The Segre embedding $(\P^1)^8 \times (\P^1)^8 \to (\P^3)^8$ lifts to a map
\begin{displaymath}
(\fieldk^2)^8 \times (\fieldk^2)^8 \to (\fieldk^4)^8.
\end{displaymath}
The ring $R_{\bullet}(M_8) \boxtimes R_{\bullet}(M_8)$ consists of functions on $(\fieldk^2)^8 \times (\fieldk^2)^8$
which are $\SL(2) \times \SL(2)$ invariant and $(\fieldk^{\times})^8 \times (\fieldk^{\times})^8$ semi-invariant.  The ring
$R_{\bullet}(N_8)$ consists of functions on $(\fieldk^4)^8$ which are $\SL(4)$ invariant and $(\fieldk^{\times})^8$
semi-invariant.  It is clear that functions of the one kind pullback to those of the other under the above map.
This pullback map on functions is $\sigma^*$.
\end{proof}

Note that $R_{\bullet}(M_8) \boxtimes R_{\bullet}(M_8)$ is a subring of $R_{\bullet}(M_8) \otimes R_{\bullet}(M_8)$.
(There is a slight change in grading, e.g., $R_1(M_8) \otimes R_1(M_8)$ is degree 1 in the former and degree 2 in
the latter.)  In what follows, we regard $\sigma^*$ as mapping to the latter ring.

We now prove the theorem.

\begin{proof}[Proof of Theorem~\ref{t:cooltheorem}]
Fix an isomorphism $I_2(M_8) \cong R_1(N_8)$ of $\SS_8$-modules.  Consider the diagram of spaces:
\begin{displaymath}
\xymatrix{
\Cone(M_8) \times \Cone(M_8) \ar[r] \ar[d] & \Cone(N_8) \ar[d] \\
R_1(M_8)^* \ar[r] & I_2(M_8)^* \cong R_1(N_8)^* }
\end{displaymath}
Here $\Cone$ denotes the affine cone of a projective variety.  All maps are morphisms of affine schemes, not just
rational maps.  We now explain the maps.  We have a natural inclusion $\Cone(M_8) \subset R_1(M_8)^*$.  The left map
adds its two components inside the vector space $R_1(M_8)^*$.  The right map is the natural map $\Cone(N_8) \to
R_1(N_8)^*$ obtained by interpreting  elements of $R_1(N_8)$ as functions on $\Cone(N_8)$.  The bottom map is the cone on
the duality map $D$; more precisely, it is given by the partial derivatives of $\cubic$.  The top map is
$\Spec(\sigma^*)$ (which is not quite the cone on $\sigma$, but close).

Under the left map, $\Cone(M_8) \times \Cone(M_8)$ maps dominantly to the cone on $\Sec(M_8)$.  The top map
is dominant --- this follows easily from the dominance of $\sigma$.  Under the right map $\Cone(N_8)$ maps
surjectively to $\Cone(N_8')$.  It follows that to prove the theorem it is enough to show that the above diagram
commutes up to multiplication by a non-zero scalar.

Consider the diagram on rings corresponding to the above diagram of spaces:
\begin{displaymath}
\xymatrix{
R_{\bullet}(M_8) \otimes R_{\bullet}(M_8) & R_{\bullet}(N_8) \ar[l] \\
\Sym^{\bullet}(R_1(M_8)) \ar[u] & \Sym^{\bullet}(I_2(M_8)) \cong \Sym^{\bullet}(R_1(N_8)) \ar[l] \ar[u] }
\end{displaymath}
All maps respect the action of $\SS_8$ and respect the various gradings (if defined correctly: one must regrade the
$N_8$ spaces by a factor of 2).
To show that the diagram commutes, it suffices to show that it commutes when restricted to the degree
two elements in the bottom right, since they generate those rings.  The degree two pieces of the bottom right rings
are irreducible representations of $\SS_8$ of type $V_{2,2,2,2}$.  The degree two piece of the top left ring is
$R_1(M_8)^{\otimes 2} \oplus R_2(M_8)^{\oplus 2}$, which by Proposition~\ref{prop:decomp} contains exactly one copy of
$V_{2,2,2,2}$.  It follows that one of the two maps from the bottom right to the top left is a scalar multiple of the
other.  Since each map is non-zero, the scalar is non-zero.
\end{proof}

\begin{remark}
The above proof may seem surprising, since we showed that a diagram was commutative without using very much
about the maps involved.  For instance, we only used three properties of the map $\Cone(M_8) \times \Cone(M_8) \to
R_1(M_8)^*$, namely:  (1) that it is $\SS_8$-equivariant; (2) that the induced map on rings preserves the grading; and (3)
that $I_2(M_8)$ is not contained in the kernel of the map on rings (this was used to conclude that the ``left then
up'' map in the diagram of rings was non-zero).  However, these are very strong conditions to place on a map:  there
is a 2 parameter family of such maps, and they all induce the same rational map $M_8 \times M_8 \to \P(R_1(M_8)^*)$.
So we really did use everything about the map!
\end{remark}

\begin{remark}
Theorem~\ref{t:cooltheorem}
 can also be proved by an easy algebraic computation, as follows.  Let $p$ and $q$ be two generic
points of $M_8$.  We write $p=(p_1, \ldots, p_8)$ and for convenience work in inhomogeneous coordinates, so that
$p_i$ is interpreted a $[1 ; p_i]$ in $\P^1$ (and similarly for $q$).  Let $t$ be a generic number and consider the point $p+tq$ on
$\Sec(M_8)$.  Evaluating $p+tq$ on \eqref{e:simplebinomial} (a partial derivative of the cubic, and so one
coordinate of the dual map) gives
\begin{equation}
\begin{split}
\label{eqn:leftside}
& 
\left(\;
\begin{array}{|c|c|c|c|}  \hline
1 & 3 & 5 & 7 \\ \hline 
2 & 4 & 6 & 8 \\ \hline 
\end{array} \, (1; p_i)_{1 \leq i \leq 8}  \right. 
 +  t
\left. \begin{array}{|c|c|c|c|}  \hline
1 & 3 & 5 & 7 \\ \hline 
2 & 4 & 6 & 8 \\ \hline 
\end{array} \, (1; q_i)_{1 \leq i \leq 8}
\;\right) \\
+& 
\left(\;
\begin{array}{|c|c|c|c|}  \hline
1 & 2 & 5 & 6 \\ \hline 
3 & 4 & 7 & 8 \\ \hline 
\end{array} \, (1; p_i)_{1 \leq i \leq 8}  \right. 
+  t
\left. \begin{array}{|c|c|c|c|}  \hline
1 & 2 & 5 & 6 \\ \hline 
3 & 4 & 7 & 8 \\ \hline 
\end{array} \, (1; q_i)_{1 \leq i \leq 8}
\;\right)\\
-& \left(\;
\begin{array}{|c|c|c|c|}  \hline
1 & 2 & 5 & 7 \\ \hline 
3 & 4 & 6 & 8 \\ \hline 
\end{array} \, (1; p_i)_{1 \leq i \leq 8}  \right. 
+  t
\left. \begin{array}{|c|c|c|c|}  \hline
1 & 2 & 5 & 7 \\ \hline 
3 & 4 & 6 & 8 \\ \hline 
\end{array} \, (1; q_i)_{1 \leq i \leq 8}
\;\right) \\
+& 
\left(\;
\begin{array}{|c|c|c|c|}  \hline
1 & 3 & 5 & 6 \\ \hline 
2 & 4 & 7 & 8 \\ \hline 
\end{array} \, (1; p_i)_{1 \leq i \leq 8} \right. 
+ t
\left. \begin{array}{|c|c|c|c|}  \hline
1 & 3 & 5 & 6 \\ \hline 
2 & 4 & 7 & 8 \\ \hline 
\end{array} \, (1; q_i)_{1 \leq i \leq 8}
\;\right)
\end{split}
\end{equation}
We wish to show that this agrees with the image of $(p,q)$ in $N_8'$ under the Segre map followed by $N_8 \to N'_8$.
A coordinate of the image of $(p,q)$ is given by the expression
\begin{equation}
\label{eq:july16}
t \;  \begin{array}{|c|c|}  \hline
1 & 5 \\ \hline
2 & 6 \\ \hline
3 & 7 \\ \hline 
4 & 8    \\ \hline
\end{array}
\, (1 ; p_i ; q_i ; p_i q_i)_{1 \leq i \leq 8}.
\end{equation}
A short argument or computation shows that the two expressions \eqref{eqn:leftside} and \eqref{eq:july16} ---
both polynomials in the $p_i$, $q_i$ and $t$ --- are equal.  The equality of the other components of the two maps
follows from either similar computations, or an appeal to the $\SS_8$-symmetry.
\end{remark}

\cut{
\begin{proof}
We prove the theorem  by describing a lift of $\Sec(M_8) \dashrightarrow \proj V_{2,2,2,2}$
to $\Sec(M_8) \dashrightarrow N_8$, in terms of the Segre map $\proj^1
\times \proj^1 \hookrightarrow \proj^3$ (as promised in \S
\ref{mainconstructions}).

Consider two general points of $M_8$,  interpreted as octuples $(p_1, \dots,
p_8)$ and $(q_1, \dots q_8)$ of points in $\proj^1$.  To
simplify the algebra, we consider these as inhomogeneous coordinates,
so for example by $p_i$ we mean $[1;p_i] \in \proj^1$.

We will show  the dual map sends the secant line joining these two
points of $M_8$ to the image of $N_8$ in $\proj V_{2,2,2,2}$  corresponding to the 8 points in $\proj^3$ given by
$[1 ; p_i ; q_i ;  p_i q_i]$ --- the image of $([1; p_i] , [1;
q_i])$ under the Segre embedding $\proj^1 \times \proj^1 \hookrightarrow \proj^3$.  
We check the image
of the point of $\Sec(M_8)$ 
whose tableaux coordinates (\S \ref{ss:p1}) are given by $\left(
  T(\vec{p}_i) + t \; T(\vec{q}_i)\right)_T$,
where $T$ runs through the $2 \times 4$ tableaux.
   In order to show the claimed
isomorphism between two irreducible representations of type $V_{2,2,2,2}$ it
suffices to check a single vector in each corresponding to a fixed
partition.    We will check that {\bf how to get rid of first 3
  numbers?  Also, Andrew, please fix the spacing to your satisfaction.}
\begin{eqnarray}
& & 
\left( 
\begin{array}{|c|c|c|c|}  \hline
1 & 3 & 5 & 7 \\ \hline 
2 & 4 & 6 & 8 \\ \hline 
\end{array} (1; p_i)_{1 \leq i \leq 8}  \right. 
 +  t
\left. \begin{array}{|c|c|c|c|}  \hline
1 & 3 & 5 & 7 \\ \hline 
2 & 4 & 6 & 8 \\ \hline 
\end{array} (1; q_i)_{1 \leq i \leq 8}
\right) \\
& & 
\left(
\begin{array}{|c|c|c|c|}  \hline
1 & 2 & 5 & 6 \\ \hline 
3 & 4 & 7 & 8 \\ \hline 
\end{array} (1; p_i)_{1 \leq i \leq 8}  \right. 
+  t
\left. \begin{array}{|c|c|c|c|}  \hline
1 & 2 & 5 & 6 \\ \hline 
3 & 4 & 7 & 8 \\ \hline 
\end{array}  (1; q_i)_{1 \leq i \leq 8}
\right)\\
&-& \left( 
\begin{array}{|c|c|c|c|}  \hline
1 & 2 & 5 & 7 \\ \hline 
3 & 4 & 6 & 8 \\ \hline 
\end{array}  (1; p_i)_{1 \leq i \leq 8}  \right. 
+  t
\left. \begin{array}{|c|c|c|c|}  \hline
1 & 2 & 5 & 7 \\ \hline 
3 & 4 & 6 & 8 \\ \hline 
\end{array}   (1; q_i)_{1 \leq i \leq 8}
\right) \\
& & 
\left(
\begin{array}{|c|c|c|c|}  \hline
1 & 3 & 5 & 6 \\ \hline 
2 & 4 & 7 & 8 \\ \hline 
\end{array}   (1; p_i)_{1 \leq i \leq 8} \right. 
+ t
\left. \begin{array}{|c|c|c|c|}  \hline
1 & 3 & 5 & 6 \\ \hline 
2 & 4 & 7 & 8 \\ \hline 
\end{array}  (1; q_i)_{1 \leq i \leq 8}
\right)
\label{eqn:leftside}
\end{eqnarray}
(the evaluation of the partial derivative \eqref{e:simplebinomial}
of the skew cubic $\cubic$, at a particular point  
of the secant line corresponding to $t$)
equals
\begin{equation}\label{eq:july16}
t \;  \begin{array}{|c|c|}  \hline
1 & 5 \\ \hline
2 & 6 \\ \hline
3 & 7 \\ \hline 
4 & 8    \\ \hline
\end{array}
(1 ; p_i ; q_i ; p_i q_i)_{1 \leq i \leq 8}.
\end{equation}
(Because the secant line is contracted, we are not surprised to see
that the $t$'s will largely fall out of \eqref{eqn:leftside}.)
Let $p_{ij} := p_i - p_j$, and $q_{ij} = q_i - q_j$.
Then \eqref{eqn:leftside} is
\begin{eqnarray*} & &  (p_{21} p_{43} p_{65} p_{87} + t \;  q_{21} q_{43}
  q_{65} q_{87})  (p_{31} p_{42} p_{75} p_{86} + t \; q_{31} q_{42} q_{75}
  q_{86}) \\ &-& (p_{21} p_{43} p_{75} p_{86}  + t \;  q_{21} q_{43} q_{75} q_{86} )
  (p_{31} p_{42} p_{65} p_{87} + t \; q_{31} q_{42} q_{65} q_{87}).\end{eqnarray*}
When we expand this, all terms involving only $p$-variables cancel
(as \eqref{e:simplebinomial} is a relation satisfied by points of
$M_8$), and similarly for the $q$-variables.
What remains is $t$ times
\begin{eqnarray*}
 p_{21} p_{43} p_{65} p_{87}  q_{31} q_{42} q_{75} q_{86}
&+& q_{21} q_{43} q_{65} q_{87} p_{31} p_{42} p_{75} p_{86} \\
 -
p_{21} p_{43} p_{75} p_{86}   q_{31} q_{42} q_{65} q_{87}
&-& q_{21} q_{43} q_{75} q_{86} p_{31} p_{42} p_{65} p_{87}\end{eqnarray*}
$$= (p_{21} p_{43} q_{31} q_{42} - p_{31} p_{42} q_{21} q_{43}) (p_{65} p_{87} q_{75} q_{86} -
q_{65} q_{87} p_{75} p_{86})
$$

By comparing this to \eqref{eq:july16},
it suffices to show that 
$$p_{21} p_{43} q_{31} q_{42} - p_{31} p_{42} q_{21} q_{43}
=
\det ( 1 \;    p_i  \;     q_i  \;    p_iq_i)_{i=1,2,3,4},$$
which can be verified by hand.
(More elegantly: both sides are quartic.  On each side, there are
 only monomials of the form $p_i p_k q_j q_k$, i.e., of the 4 possible
 subscripts, 3 appear, one both
on the $p$ side and on the $q$ side, and they form one $\SS_4$-orbit,
with a sign representation.)
\end{proof}
}

\cut {
\begin{remark}
It may appear that we needed the explicit
equation of $\cubic$ of Remark~\ref{r:explicit} for this proof.
But in fact we need only know that the partial derivatives of $\cubic$
(as an $\SS_8$-representation) are the same as the  simple binomial
quadratic relations, which is true as the quadratic relations form an
irreducible $\SS_8$-representation 
(Proposition~\ref{prop:decomp}).
\end{remark}
}

\begin{corollary}
We have (a) $\dim \Sec(M_8) = 11$; and (b) $\deg \Sec(M_8) = 21$ or $42$.\label{c:2142}
\end{corollary}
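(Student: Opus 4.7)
The plan is to prove (a) by sandwiching $\dim\Sec(M_8)$ between matching upper and lower bounds, and then deduce (b) from part~(a) together with Proposition~\ref{p:gregsmith}. For the upper bound $\dim\Sec(M_8)\le 11$, I would realize $\Sec(M_8)$ as the image of the incidence variety
\[
I = \{(p,q,r) \in M_8 \times M_8 \times \proj^{13} : r \in \overline{pq},\ p \neq q\},
\]
which is a $\proj^1$-bundle over the complement of the diagonal in $M_8 \times M_8$, hence irreducible of dimension $2\dim M_8 + 1 = 11$. This also shows $\Sec(M_8)$ is irreducible, which will be crucial in part~(b). For the matching lower bound, I would apply Theorem~\ref{t:cooltheorem}: since $\Sec(M_8) \dashrightarrow N'_8$ is dominant, $\dim\Sec(M_8) \ge \dim N'_8 = 9$, so it suffices to exhibit a $2$-dimensional generic fiber. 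One dimension comes from the secant line itself, which is contracted by $D$. A second dimension comes from the lemma that $\sigma: M_8 \times M_8 \dashrightarrow N_8$ is dominant: since $\dim(M_8 \times M_8) = 10$ and $\dim N_8 = 9$, the generic $\sigma$-fiber is a $1$-parameter family of pairs $(p',q')$, each of whose secant lines maps to the same point of $N'_8$ (using that $N_8 \to N'_8$ is finite and that $D$ contracts secant lines). Generically their union is a $2$-dimensional subvariety, yielding $\dim\Sec(M_8) \ge 9 + 2 = 11$.

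For (b), I would argue that $\Sec(M_8)$ is an irreducible component of the Hessian intersection $\hessian_\cubic \cap \cubic$. We already know $\Sec(M_8) \subset \cubic$ (by Bezout, using $M_8 = \Sing\cubic$), and the earlier text shows that the dual map $D$ contracts $\Sec(M_8)$. Since the differential of the Gauss map of a hypersurface is governed by the Hessian matrix, any subvariety contracted by $D$ must lie in $\{\det \hessian_\cubic = 0\}$; hence $\Sec(M_8) \subset \hessian_\cubic \cap \cubic$. By Proposition~\ref{p:gregsmith}, this intersection is proper, pure of dimension $11$ and total degree $42$, with irreducible components of degree $21$ or $42$. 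Combined with the irreducibility and $11$-dimensionality established in (a), it follows that $\Sec(M_8)$ is one of these components, and thus $\deg\Sec(M_8) \in \{21, 42\}$.

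I expect the main technical point to require care in the lower-bound argument in~(a): specifically, the claim that the $1$-parameter family of alternative pairs in a generic $\sigma$-fiber really sweeps out a $2$-dimensional surface in $\proj^{13}$, rather than collapsing all of its secant lines onto a single line. This should follow from genericity together with the geometric description underlying Theorem~\ref{t:cooltheorem} (where a generic point of $N_8$ has a positive-dimensional pencil of quadrics, yielding an honest family of non-coincident pairs $(p',q')$), but it warrants explicit verification.
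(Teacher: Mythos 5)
Your proof follows essentially the same approach as the paper: for (a), the standard upper bound $\dim\Sec(M_8)\le 2\dim M_8+1$ combined with the at-least-$2$-dimensional generic fiber of the dominant map to $N_8$ (or $N'_8$) coming from Theorem~\ref{t:cooltheorem}, and for (b), reducing to Proposition~\ref{p:gregsmith} by observing that the contracted divisor $\Sec(M_8)$ is an irreducible component of $\hessian_\cubic\cap\cubic$. You actually spell out two points the paper leaves implicit (irreducibility of $\Sec(M_8)$ via the incidence variety, and the Hessian containment via degeneracy of the Gauss map's differential), and you correctly flag the same genericity subtlety in the lower-bound fiber argument that the paper also glosses over.
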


\begin{proof}
(a) Of course  $\dim \Sec (M_8) \leq 2 \dim (M_8)+1 = 11$.  For the opposite inequality, note that $\dim N_8 = 9$, and
we see that the preimage (under the dominant rational map $\Sec(M_8) \dashrightarrow N_8$) of a general point of $N_8$
has dimension at least $2$:  one corresponding to the one-parameter family of quadrics through $8$ general points in
$\proj^3$, and one corresponding to the secant line joining those two points of $M_8$ (corresponding to the two
octuples of points in $\proj^1$).  Thus $\dim \Sec(M_8) \geq 11$. 

Part (b) then follows from Proposition~\ref{p:gregsmith}.  Note that Proposition~\ref{p:gregsmith} assumes the base
field $\fieldk$ is $\Q$, but it suffices to show
Corollary~\ref{c:2142} in this case, as degree is preserved by  extension of base field.
\end{proof}

We pause to take stock of where we are.  We now know that $\hessian_{\cubic} \cap \cubic$, which has degree $42$,
contains $\Sec(M_8)$ (which has degree $21$ or $42$) as a component.  We will soon (Proposition~\ref{p:H2}) see that
$\hessian_{\cubic} \cap \cubic$ contains $\Sec(M_8)$ with multiplicity $2$ (and hence  that $\deg \Sec(M_8) = 21$).

\subsection{There is a skew quintic relation $\quintic$ in
  $I_{\bullet}(N'_8)$ defining the dual
hypersurface to $\cubic$}

As previously mentioned, we have a canonical isomorphism
$\proj(R_1(M_8))=\proj(R_1(N_8)^*)$, and so we can regard the dual
hypersurface $\quintic$ (the reduced image of the dual map) to
$\cubic$ as a subvariety of $\proj(R_1(N_8)^*)$.  As $\quintic$ is
reduced, it is the zero locus of a unique (up to scaling) square-free
polynomial, which we also denote by $\quintic$.  We begin our analysis
of the dual hypersurface with the following result:

\begin{proposition}
\label{p:datleast5}
The degree of $\quintic$ is at least $5$.
\end{proposition}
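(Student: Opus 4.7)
The plan is to bound $\deg \quintic$ from below by combining the containment $N'_8\subset\quintic$ with a dimension count for the low-degree parts of the ideal $I_\bullet(N'_8)$. The containment is automatic: by Theorem~\ref{t:cooltheorem} the dual map $D$ sends $\Sec(M_8)$ dominantly to $N'_8$, and by construction $D(\cubic)=\quintic$, so $N'_8\subset\overline{D(\Sec(M_8))}\subset\quintic$. Since $\cubic$ is skew-invariant and $D$ is $\SS_8$-equivariant, $\quintic$ is $\SS_8$-invariant as a subvariety of $\proj(R_1(N_8)^*)$; being an irreducible reduced hypersurface, its defining form $F\in\Sym^d R_1(N_8)$ (where $d:=\deg\quintic$) is unique up to scalar, so it spans a one-dimensional $\SS_8$-subrepresentation of $I_d(N'_8)$. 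Since the only one-dimensional irreducibles of $\SS_8$ are the trivial and sign representations, $F$ lies in the trivial or sign isotypic part of $I_d(N'_8)$.

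It therefore suffices to show that for $d\le 4$, $I_d(N'_8)$ contains no nonzero trivial or sign isotypic piece. Using the Hilbert series $(1+4t+10t^2+20t^3+21t^4)/(1-t)^{10}$ for $N'_8$ recorded in \S\ref{s:miscalgebra}, one reads off
\begin{displaymath}
\dim R_2(N'_8)=105=\tbinom{15}{2},\qquad \dim R_3(N'_8)=560=\tbinom{16}{3},
\end{displaymath}
which agree with $\dim\Sym^2 R_1(N'_8)$ and $\dim\Sym^3 R_1(N'_8)$ respectively (and $\dim R_1(N'_8) = 14 = \dim \Sym^1 R_1(N'_8)$ tautologically). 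Hence $I_1(N'_8)=I_2(N'_8)=I_3(N'_8)=0$, ruling out $d\le 3$.

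For $d=4$, the analogous comparison yields $\dim I_4(N'_8)=\binom{17}{4}-2366=2380-2366=14$. By Proposition~\ref{p:repfacts}(d), $I_4(N'_8)$ contains a copy of the $14$-dimensional irreducible $V_{4,4}$; by dimensions, $I_4(N'_8)\cong V_{4,4}$ as an $\SS_8$-representation. Since $V_{4,4}$ is irreducible and is neither the trivial nor the sign representation, it contains no one-dimensional subrepresentation of either kind. Hence no eligible $F$ of degree $4$ exists, ruling out $d=4$ and giving $\deg\quintic\ge 5$.

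The only point that requires care is the packaging of $\SS_8$-equivariance so that the equation of $\quintic$ is forced into a one-dimensional character, after which the argument reduces to a direct dimension check using the already-tabulated Hilbert series together with Proposition~\ref{p:repfacts}(d); there is no substantive obstacle.
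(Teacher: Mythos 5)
Your argument is logically sound but follows a genuinely different route from the paper's. The paper proves $\deg\quintic\ge 5$ with a pure Bezout computation on $\cubic$: a partial derivative $f$ of $\quintic$ has degree $d-1$, vanishes on $\Sing(\quintic)\supset D(\Sec(M_8))$ but not on all of $\quintic$, hence $D^*f$ has degree $2(d-1)$ and vanishes on $\Sec(M_8)$ but not on $\cubic$, giving $6(d-1)=\deg((D^*f)\cap\cubic)\ge\deg\Sec(M_8)\ge 21$ by Corollary~\ref{c:2142}. That degree bound in turn rests on Proposition~\ref{p:gregsmith}, which is the \emph{only} computer input in \S\ref{s:dualities} (a single intersection of two plane curves over $\Q$). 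Your proof instead feeds in the full Hilbert series of $N'_8$ from \S\ref{s:miscalgebra}, reads off $I_d(N'_8)=0$ for $d\le 3$ and $\dim I_4(N'_8)=14$, then uses $\SS_8$-equivariance of $D$ (valid: the twist by $\sgn$ between $R_1(M_8)$ and $R_1(N_8)^*$ disappears on projectivization) to force the defining form of $\quintic$ into a trivial or sign character, which $V_{4,4}$ does not contain. The arithmetic is right ($\binom{17}{4}-2366=14$), and irreducibility/reducedness of $\quintic$ is not actually needed for the one-dimensionality of the span of the defining form. The trade-off: the Hilbert series of $N'_8$ is a considerably heavier, and in the paper only asserted, computer computation than the plane-curve intersection of Proposition~\ref{p:gregsmith}, so your route sacrifices the paper's deliberate economy of computer input in this section. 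On the other hand, you get as a by-product the identification $I_4(N'_8)\cong V_{4,4}$, which the paper only establishes later (Corollary~\ref{cor:I4}) after Proposition~\ref{prop:I4} and several more degree arguments. Be aware also that the paper derives the Hilbert series information you use for $d\le 3$ as a \emph{consequence} of this proposition (Corollary~\ref{c:united}), so if one were to rearrange the paper around your argument one would need to be careful that the Hilbert series for $N'_8$ is genuinely established independently of \S\ref{s:dualities} rather than via the logic that runs through Corollary~\ref{c:united}.
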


\cut{
\begin{proof}
The map $D$ contracts $\Sec(M_8)$, and hence sends $\Sec(M_8)$ into the singular
locus of the dual hypersurface $\quintic'$.  The partial derivatives $\frac {\partial \quintic'} {\partial y_i}$ of the
dual hypersurface have degree $d-1$, and vanish on $\Sing \quintic'$ but not on all of  $\quintic'$.  Thus
$D^* \frac {\partial \quintic'} {\partial y_i}$ vanishes on $\Sec (M_8)$ but not $\cubic$.  Now
$D^*\frac{\partial \quintic'} {\partial y_i}$ has degree $2(d-1)$.  But $\deg (\Sec(M_8) ) \geq 21$ (Corollary~\ref{c:2142}),
so by Bezout's theorem, $(\deg D^* \frac {\partial \quintic'} {\partial y_i}) \cap \cubic = 2(d-1) \times 3  \geq 21$,
from which $d \geq 5$.
\end{proof}
}

\begin{proof}
Let $d$ be the degree of $\quintic$.  The map $D$ contracts $\Sec(M_8)$, and hence sends $\Sec(M_8)$ into the singular
locus of $\quintic$.  Let $f$ be a partial derivative of $\quintic$.  Then $f$ has degree $d-1$ and vanishes on 
$\Sing(\quintic)$ but not all of $\quintic$.  Thus $D^*f$ vanishes on $\Sec(M_8)$ but not $\cubic$.  Now, $D^*f$ has
degree $2(d-1)$.  But $\deg(\Sec(M_8)) \geq 21$ (Corollary~\ref{c:2142}), so by Bezout's theorem, $\deg((D^*f) \cap \cubic)
= 2(d-1) \times 3  \geq 21$, from which $d \geq 5$.\end{proof}

We will use the following consequence in \S \ref{s:N8}.

\begin{corollary}
\label{c:united}
The ideal $I_{\bullet}(N'_8)$ contains no relations of degree less than $4$.
\end{corollary}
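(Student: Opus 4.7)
The plan is to parallel the proof of Proposition~\ref{p:datleast5}, combining the dominant rational map $\Sec(M_8)\dashrightarrow N'_8$ of Theorem~\ref{t:cooltheorem}, the degree bound $\deg\Sec(M_8)\ge 21$ of Corollary~\ref{c:2142}(b), and the lower bound $\deg\quintic\ge 5$ of Proposition~\ref{p:datleast5}. The case $d=1$ is immediate: $N'_8\hookrightarrow\proj(R_1(N'_8)^*)$ is projectively nondegenerate by the very definition of $R_1(N'_8)$ as the degree $1$ piece of the coordinate ring, so $I_1(N'_8)=0$.

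For $d\in\{2,3\}$, I would suppose toward contradiction that $g\in I_d(N'_8)$ is nonzero and pull it back along the dual map $D:\cubic\dashrightarrow\quintic$, given on coordinate rings by the $\SS_8$-equivariant ring map $\Sym^\bullet(R_1(N_8))\to\Sym^\bullet(R_1(M_8))/(\cubic)$ sending $\lambda\mapsto\partial\cubic/\partial\lambda$. Since $D$ carries $\Sec(M_8)$ dominantly onto $N'_8$ and $g$ vanishes on $N'_8$, the class of $D^*g$ in $\Sym^{2d}(R_1(M_8))/(\cubic)$ vanishes identically on $\Sec(M_8)$.

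Then I would split on whether $g$ vanishes on all of $\quintic$. If not, then $D^*g\not\equiv 0\pmod{\cubic}$ and any lift $\tilde g\in\Sym^{2d}(R_1(M_8))$ is not divisible by the irreducible cubic polynomial $\cubic$; Bezout on $\proj^{13}$ then forces $\deg(V(\tilde g)\cap\cubic)=6d\le 18$, contradicting $\Sec(M_8)\subseteq V(\tilde g)\cap\cubic$ together with Corollary~\ref{c:2142}(b). Otherwise $g$ vanishes on $\quintic$; since $\quintic$ is the reduced image of the irreducible hypersurface $\cubic$ under $D$ it is itself irreducible, so by Proposition~\ref{p:datleast5} its defining polynomial is irreducible of degree at least $5$, and every polynomial vanishing on $\quintic$ is a multiple of it. This forces $\deg g\ge 5$, contradicting $d\le 3$.

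The only place I expect to need care is the irreducibility of the cubic $\cubic$, which underpins both the Bezout step (for a proper intersection) and the uniqueness of the defining equation of $\quintic$. This follows easily: any factorisation $\cubic=\ell\cdot q$ with $\ell$ linear and $q$ quadratic would yield $V(\ell)\cap V(q)\subseteq\Sing(\cubic)=M_8$, forcing an ($\ge 11$)-dimensional subvariety into the $5$-fold $M_8$, which is absurd. Everything else is bookkeeping about lifting the class $D^*g$ modulo $\cubic$, and the case distinction on $\quintic$ precisely handles the ambiguity in that lift.
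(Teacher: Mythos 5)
Your argument is essentially the paper's argument: pull the putative relation $g$ back along $D$, observe that $D^*g$ vanishes on $\Sec(M_8)$, invoke Bezout to get $6d\geq\deg\Sec(M_8)\geq 21$, and use Proposition~\ref{p:datleast5} to rule out the degenerate case where $D^*g$ vanishes on all of $\cubic$. The paper compresses your case split into the single sentence ``$D^*f$ does not vanish on $\cubic$ by Proposition~\ref{p:datleast5},'' which is exactly the contrapositive of your observation that $g$ vanishing on $\quintic$ would force $\deg g\geq 5$; and your separate treatment of $d=1$ is not needed, since the Bezout count $6<21$ already disposes of it.

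The one place your write-up has a real (though easily repaired) gap is the irreducibility of $\cubic$. You argue that a factorisation $\cubic=\ell q$ would put the $\geq 11$-dimensional set $V(\ell)\cap V(q)$ inside $\Sing(\cubic)=M_8$. But the paper is deliberately careful, throughout \S\ref{s:dualities}, to use only the inclusion $M_8\subseteq\Sing(\cubic)$ (the reverse inclusion is what Corollary~\ref{generation} establishes, and that result is quarantined to part (B) of the paper precisely to keep \S\ref{s:dualities} independent of it). So at this stage you cannot bound $\dim\Sing(\cubic)$ from above by $5$. A self-contained fix that uses nothing beyond what is already in play: if $\cubic$ were reducible it would have a linear factor $\ell$; skew-invariance of $\cubic$ and unique factorisation would force $\fieldk\ell$ to be an $\SS_8$-stable line in $R_1(M_8)\cong V_{4,4}$, contradicting the irreducibility of $V_{4,4}$. (Alternatively, one can quote the factoriality of $\cubic$ from the proof of Proposition~\ref{prop:I4}, which the paper derives from Grothendieck's theorem rather than from any knowledge of $\Sing(\cubic)$.)
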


\begin{proof}
Suppose that $f \in I_d(N'_8)$ is a non-zero relation of degree $d<4$.  Then $D^*f$ does not vanish on $\cubic$ by
Proposition~\ref{p:datleast5}.  Now $D^*f$ is a polynomial of degree $2d$ so by Bezout's theorem,
$\deg(D^* f \cap \cubic) = 6d$.  But $f$ vanishes on $N'_8$, so $D^*f$ vanishes on $f^{-1} N'_8 = \Sec(M_8)$, so
\begin{displaymath}
6d \geq \deg( \Sec(M_8)) \geq 21
\end{displaymath}
(from Corollary~\ref{c:2142}), from which $d \geq 4$, yielding a contradiction.
\end{proof}

\begin{corollary}
We have $\deg \Sec(M_8) = 21$.\label{c:21}
\end{corollary}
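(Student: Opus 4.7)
The plan is to exhibit a low-degree hypersurface in $\proj^{13}$ whose intersection with $\cubic$ contains $\Sec(M_8)$, and then apply Bezout's theorem to force $\deg \Sec(M_8) \le 24$. Combined with Corollary~\ref{c:2142}(b), which restricts $\deg \Sec(M_8)$ to the two values $21$ or $42$, this will force the equality $\deg \Sec(M_8) = 21$. The mechanism is the same as in Proposition~\ref{p:datleast5} and Corollary~\ref{c:united}: pull back an element of the ideal of $N'_8$ along the dual map $D$, whose components are the $14$ quadric partial derivatives of $\cubic$.

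First I will invoke Proposition~\ref{p:repfacts}(d) to fix a nonzero $g \in I_4(N'_8)$. Then $D^*g$ is a polynomial of degree $8$ on $\proj(R_1(M_8)^*)$. Since $D$ sends $\Sec(M_8)$ dominantly into $N'_8$ by Theorem~\ref{t:cooltheorem}, and $g$ vanishes on $N'_8$, the pullback $D^*g$ vanishes on all of $\Sec(M_8)$. The crucial point is that $D^*g$ does \emph{not} vanish on the whole of $\cubic$: were it to do so, $g$ would vanish on $D(\cubic) = \quintic$ and hence lie in the principal ideal of $\quintic$; but $\deg g = 4 < 5 \le \deg \quintic$ by Proposition~\ref{p:datleast5}, which forces $g = 0$, contrary to our choice.

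With these two properties the conclusion is immediate from Bezout. The intersection $V(D^*g) \cap \cubic$ is a proper effective divisor on the twelvefold $\cubic$ of degree $8 \cdot 3 = 24$. Since $\Sec(M_8)$ is an irreducible $11$-dimensional subvariety of $\cubic$ contained in this divisor, it is one of its irreducible components and its degree is bounded by the total, giving $\deg \Sec(M_8) \le 24$. The one delicate point is the verification that $D^*g$ is not absorbed by $\cubic$, and this is handled uniformly by the lower bound on $\deg \quintic$; the cleanness of the proof hinges on using a \emph{quartic} ideal element from Proposition~\ref{p:repfacts}(d) rather than the skew quintic relation $\quintic_0 \in I_5(N'_8)$, since the latter would leave open the case $\quintic_0 = \lambda \quintic$ and require a more elaborate fall-back to partial derivatives.
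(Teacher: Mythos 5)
Your proof is correct and follows essentially the same route as the paper: both use Proposition~\ref{p:repfacts}(d) to obtain a nonzero quartic relation $g \in I_4(N'_8)$, pull it back via $D$ to obtain a degree-$8$ polynomial vanishing on $\Sec(M_8)$ but not on $\cubic$ (thanks to $\deg\quintic\geq 5$), and apply Bezout to get $\deg\Sec(M_8)\leq 24$, which combined with the dichotomy in Corollary~\ref{c:2142}(b) forces degree $21$. The paper compresses the nonvanishing-on-$\cubic$ step into a citation of Proposition~\ref{p:datleast5} ``by the same argument as Corollary~\ref{c:united}''; you have merely unpacked that step, which is fine.
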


\begin{proof}
By Proposition~\ref{p:repfacts} (d), there exists a non-zero  relation
of degree $4$. 
By the same argument as the proof of Corollary~\ref{c:united},
$6\times 4  \geq \deg ( \Sec(M_8))$.    The result then follows from Corollary~\ref{c:2142}.\end{proof}

\begin{proposition}
\label{prop:I4}
There is an isomorphism
\begin{displaymath}
\Phi:I_4(N'_8) \to R_1(M_8),
\end{displaymath}
unique up to scalar, 
characterized by the following property:  if $\mc{R} \in I_4(N'_8)$ is a quartic relation on $N'_8$ then $D^*\mc{R}
\cap \cubic$ is the union (sum of divisors) of $\Sec(M_8)$ and the intersection of $\cubic$ with the hyperplane determined by
$\Phi(\mc{R})$.  
\end{proposition}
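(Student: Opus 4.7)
The plan is to construct $\Phi$ via residuation of divisors on $\cubic$. Fix a nonzero $\mc{R} \in I_4(N'_8)$. Since $\deg \mc{R} = 4 < 5 = \deg \quintic$ (Proposition~\ref{p:datleast5}), $\mc{R}$ does not vanish identically on the hypersurface $\quintic$; as $D$ dominates $\quintic$, the pullback $D^*\mc{R}$ (of degree $2\cdot 4=8$) does not vanish on all of $\cubic$, so it cuts out a proper effective divisor on the $12$-fold $\cubic$ of degree $8 \cdot 3 = 24$. Because $\mc{R}$ vanishes on $N'_8 = \overline{D(\Sec(M_8))}$ (Theorem~\ref{t:cooltheorem}), $D^*\mc{R}$ vanishes on the irreducible divisor $\Sec(M_8)$, which occurs in $D^*\mc{R}\cap\cubic$ with some multiplicity $k\ge 1$. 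Since $\deg \Sec(M_8)=21$ (Corollary~\ref{c:21}), the inequality $21k\le 24$ forces $k=1$, and the residual $E_{\mc{R}} := D^*\mc{R}\cap\cubic - \Sec(M_8)$ is an effective divisor on $\cubic$ of degree $3$.

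The next step identifies $E_{\mc{R}}$ as a hyperplane section. Since $\Sing(\cubic)=M_8$ has codimension $7$ in the $12$-dimensional hypersurface $\cubic$, a Grothendieck--Lefschetz argument yields $\Pic(\cubic)=\Z \cdot H$ with $H$ the hyperplane class; hence every effective divisor of degree $3$ on $\cubic$ is of the form $\cubic \cap H'$ for some hyperplane $H' \subset \proj^{13}$. Thus $E_{\mc{R}}=\cubic \cap H_{\mc{R}}$ for a unique hyperplane $H_{\mc{R}}$, and we take $\Phi(\mc{R}) \in R_1(M_8)$ to be a linear form cutting out $H_{\mc{R}}$. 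To make $\Phi$ canonically $\fieldk$-linear rather than merely projective, fix once and for all a nonzero section $s\in H^0(\cubic, \oh_\cubic(7))$ with divisor $\Sec(M_8)$, and define $\Phi(\mc{R}) := (D^*\mc{R}|_\cubic)/s$, viewed as an element of $H^0(\cubic, \oh_\cubic(1)) \cong R_1(M_8)$, where the last identification follows from projective normality of the hypersurface $\cubic$.

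Linearity and $\SS_8$-equivariance are immediate from the construction. For injectivity: if $\Phi(\mc{R})=0$ then $D^*\mc{R}|_\cubic=0$, contradicting the first paragraph. Since $\dim R_1(M_8)=14$ and, by Proposition~\ref{p:repfacts}(d), $I_4(N'_8)$ contains a copy of $V_{4,4}$ of dimension $14$, injectivity forces $\dim I_4(N'_8)=14$ and makes $\Phi$ an isomorphism (incidentally proving Corollary~\ref{cor:I4}). Uniqueness up to scalar is then automatic: $H_{\mc{R}}$ is intrinsically determined by $\mc{R}$, so any linear map satisfying the characterizing property agrees with $\Phi$ up to one overall scalar (precisely the ambiguity in the choice of $s$).

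The main obstacle I foresee is the Picard-group computation for $\cubic$: to identify effective degree-$3$ divisors with hyperplane sections one must verify that the singular hypersurface $\cubic$ satisfies the hypotheses of Grothendieck--Lefschetz. The codimension of $M_8 = \Sing(\cubic)$ inside $\cubic$ is $7$, which is comfortably large, but the precise form of the theorem and the role of normality of $\cubic$ should be confirmed carefully. Everything else in the argument is degree and dimension bookkeeping, together with the key input that $\Sec(M_8)$ is the entire exceptional divisor of $D$, which is already in hand.
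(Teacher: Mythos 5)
Your proposal follows essentially the same route as the paper's own proof: both appeal to Grothendieck--Lefschetz for $\Pic(\cubic)$ (the paper also invokes factoriality of $\cubic$ via [SGA2, Exp.\ XI, 3.14] so that Weil divisors are Cartier), both produce the section $s \in \Gamma(\cubic, \oh(7)|_\cubic)$ cutting out $\Sec(M_8)$, both define $\Phi(\mc{R}) = (D^*\mc{R}|_\cubic)/s \in \Gamma(\cubic, \oh(1)|_\cubic) \cong R_1(M_8)$, and both obtain uniqueness up to scalar from the uniqueness of $s$. You are in fact a bit more explicit than the paper on two small points: you spell out the injectivity of $\Phi$ (via the degree bound $\deg\mc{R}=4 < \deg\quintic$, which also underlies Corollary~\ref{c:united} in the paper) and you make the multiplicity-one statement explicit via $21k \le 24$, whereas the paper leaves these as implicit in the division by $s$. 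One small terminological quibble: the identification $\Gamma(\cubic, \oh(1)|_\cubic) \cong R_1(M_8)$ comes from the vanishing of $H^0(\oh(-2))$ and $H^1(\oh(-2))$ on projective space in the long exact sequence, not from \emph{projective normality} of $\cubic$ per se (that term usually also requires normality of $\cubic$, which is not needed here); the paper phrases it correctly via the exact sequence.
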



\begin{proof}
  First note the hypersurface $\cubic$ is factorial (by a theorem of
  Grothendieck,
  \cite[Exp.\ XI, 3.14]{sga2}, implying that complete intersections
  factorial in codimension $3$ are factorial --- our special case can also be shown by hand
  using Nagata's criterion for factoriality \cite[Lem.\ 19.20]{e}
  applied to the explicit description of the cubic of
  Remark~\ref{r:explicit}).  Thus all Weil divisors are Cartier.
  Also, by the Lefschetz hyperplane theorem for Picard groups,
  $\Pic(\proj(R_1(M_8)^*)) \to \Pic(\cubic)$ is an isomorphism
  \cite[Exp.\ XII, Cor.~3.7]{sga2} (line bundles on complete
  intersections in $\proj^n$ of dimension at most $3$ are all
  restrictions from the ambient projective space).  Now $\Sec (M_8)$ is a
  divisor of degree $21$ on $\cubic$.  Thus $\Sec(M_8)$ is the
  vanishing scheme of some section $s \in \Gamma( \cubic,
  \oh(7)|_{\cubic})$ (unique up to scalar).  We remark that $\SS_8$
thus acts on $s$ by a character, and hence either by the identity by sign.

  We begin by noting that $D^*$ yields a linear map $I_4(N'_8)
  \rightarrow \Gamma(\cubic, \oh(8)|_{\cubic}).$ For any element of
  $I_4(N'_8)$, its pullback by $D$ vanishes on $D^* N'_8 = \Sec(M_8)$,
  and thus is divisible by (the effective Cartier divisor) $s$.
  Dividing by $s$ yields a map $I_4(N'_8) \rightarrow \Gamma(\cubic,
  \oh(1)|_{\cubic})$.  From the long exact sequence associated to $$0
  \rightarrow \oh_{\proj^{14}}(-2) \rightarrow \oh_{\proj^{14}}(1)
  \rightarrow \oh(1)|_{\cubic} \rightarrow 0,$$ and $\Gamma(\proj^{14},
  \oh(1)) = R_1(M_8)$, we may identify $\Gamma(\cubic, \oh(1)|_{\cubic})$
  with $R_1(M_8)$.  We have thus obtained a map $I_4(N'_8) \rightarrow
  R_1(M_8)$, unique up to scalar.    It is not the zero map.
\end{proof}

\begin{corollary}
\label{cor:I4}
The space $I_4(N'_8)$ is isomorphic to $V_{4,4}$ as a representation of $\SS_8$.
\end{corollary}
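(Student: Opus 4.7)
The plan is to establish $\dim I_4(N'_8) = 14$ and combine this with Proposition~\ref{p:repfacts}(d) (which already exhibits a $14$-dimensional subrepresentation $V_{4,4} \subseteq I_4(N'_8)$) to force $I_4(N'_8) = V_{4,4}$.

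To obtain the upper bound $\dim I_4(N'_8) \leq 14 = \dim R_1(M_8)$, I will show the linear map $\Phi: I_4(N'_8) \to R_1(M_8)$ of Proposition~\ref{prop:I4} is injective. Unwinding the construction of $\Phi$, an element $\mc{R}$ lies in $\ker \Phi$ precisely when the pullback $D^*\mc{R} \in \Gamma(\cubic,\oh(8)|_{\cubic})$ vanishes identically on $\cubic$, not merely along $\Sec(M_8)$ (which is automatic, since $\mc{R}$ vanishes on $N'_8$). This in turn means $\mc{R}$ vanishes on the dual hypersurface $\quintic = \overline{D(\cubic)}$. By Proposition~\ref{p:datleast5}, $\quintic$ is a reduced hypersurface of degree at least $5$, so its defining equation must divide $\mc{R}$; since $\mc{R}$ has degree $4$, this forces $\mc{R} = 0$.

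The argument is essentially an unwinding. The one point requiring care is the translation of $\Phi(\mc{R}) = 0$ into the identical vanishing of $D^*\mc{R}$ on $\cubic$: because $D^*\mc{R}$ vanishes on $\Sec(M_8)$ and therefore equals $s \cdot h$ for a unique $h \in \Gamma(\cubic,\oh(1)|_{\cubic}) \cong R_1(M_8)$, the condition $\Phi(\mc{R}) = h = 0$ immediately gives $D^*\mc{R} \equiv 0$ on $\cubic$. All other ingredients (factoriality of $\cubic$, existence of the section $s$ cutting out $\Sec(M_8)$ on $\cubic$, the Lefschetz identification $\Gamma(\cubic, \oh(1)|_{\cubic}) = R_1(M_8)$, and the lower bound $\deg \quintic \geq 5$) are already in place. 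There is no substantive obstacle, since the content of the corollary is really that the ``isomorphism'' asserted in Proposition~\ref{prop:I4} pins down both the dimension and, via $R_1(M_8) \cong V_{4,4}$, the isomorphism type of $I_4(N'_8)$.
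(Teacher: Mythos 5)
Your proof is correct and follows the same approach as the paper: the paper's proof of Corollary~\ref{cor:I4} likewise combines the existence of a $V_{4,4}$ inside $I_4(N'_8)$ (Proposition~\ref{p:repfacts}(d)) with the fact that Proposition~\ref{prop:I4} makes $I_4(N'_8)$ $14$-dimensional via $\Phi$. What you add is an explicit verification that $\Phi$ is injective --- $\Phi(\mc{R})=0$ forces $D^*\mc{R}$ to vanish on $\cubic$, hence $\mc{R}$ to vanish on the irreducible dual hypersurface $\quintic$, which has degree $\ge 5 > 4 = \deg\mc{R}$ by Proposition~\ref{p:datleast5}, so $\mc{R}=0$. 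The paper's proof of Proposition~\ref{prop:I4} only explicitly records that the constructed map ``is not the zero map,'' leaving the injectivity step (which is exactly the degree comparison of Corollary~\ref{c:united}, run at degree $4$) implicit; you have supplied it. This is the right argument, not a different route.
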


\begin{proof}
We know $I_4(N'_8)$ contains a copy of $V_{4,4}$, while the proposition shows that $I_4(N'_8)$ is 14 dimensional.
\end{proof}

We remark that as $I_4(N'_8)$ and
  $R_1(M_8)$ are both the representation $V_{4,4}$, 
the section  $s$ appearing in the proof of Proposition~\ref{prop:I4},
 cutting out $\Sec(M_8)$,  must be $\SS_8$-invariant.

\begin{theorem}
\label{t:Qdual}
The polynomial $\quintic$ is degree 5, is skew-invariant under $\SS_8$, and its derivatives belong to $I_4(N'_8)$.
Furthermore, it is the unique such polynomial, up to scalars.
\end{theorem}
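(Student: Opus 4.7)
The plan is to identify the defining equation of $\quintic$ with the skew quintic relation produced by Proposition~\ref{p:repfacts}(c). Let $f \in I_5(N'_8)$ be a nonzero skew-invariant element. I will show that $f$ vanishes on the hypersurface $\quintic$; combined with the lower bound $\deg \quintic \geq 5$ (Proposition~\ref{p:datleast5}) and the irreducibility of $\quintic$ (as the closure of the image of the dominant rational map $D$ from the irreducible $\cubic$), this forces $\deg \quintic = 5$ and $f = c \cdot \quintic$, giving the degree and skewness assertions in one stroke.

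The main technical step is the vanishing $f|_\quintic = 0$. Since $D: \cubic \dashrightarrow \proj(R_1(N_8)^*)$ sends $\Sec(M_8)$ dominantly to $N'_8$ (Theorem~\ref{t:cooltheorem}) and $f|_{N'_8} = 0$, the degree-$10$ pullback $D^*f$ vanishes on $\Sec(M_8) \subset \cubic$. From the proof of Proposition~\ref{prop:I4}, $\Sec(M_8)$ is the zero divisor of an $\SS_8$-invariant section $s \in \Gamma(\cubic, \oh(7)|_\cubic)$, so I can write $(D^*f)|_\cubic = s \cdot h$ for some $h \in \Gamma(\cubic, \oh(3)|_\cubic) \cong \Sym^3 R_1(M_8)/\fieldk \cdot \cubic$. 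Since $f$ is skew and $s$ is invariant, $h$ must itself be skew. But by Proposition~\ref{p:repfacts}(a), the skew-invariant part of $\Sym^3 R_1(M_8)$ is one-dimensional and spanned by $\cubic$ itself, so the skew-invariant part of the quotient is zero and $h = 0$. Thus $(D^*f)|_\cubic = 0$: $f$ vanishes on $D(\cubic)$, which is dense in $\quintic$, giving $f|_\quintic = 0$.

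For the derivative claim, I will establish $N'_8 \subset \Sing(\quintic)$. For a general point $p \in N'_8$, the fiber of $D|_{\Sec(M_8)}$ over $p$ has dimension $11 - 9 = 2$ by Theorem~\ref{t:cooltheorem}, so one can pick two distinct smooth points $x_1 \ne x_2$ of $\cubic$ lying in this fiber. If $p$ were smooth on $\quintic$, then reflexivity of projective duality in characteristic zero would force the Gauss map $D'$ of $\quintic$ to recover both $x_1$ and $x_2$ from $p$ (via the identities $D'(D(x_i)) = x_i$ for smooth $x_i$ with $D(x_i)$ smooth on $\quintic$), a contradiction. So $p \in \Sing(\quintic)$, and by taking closures $N'_8 \subset \Sing(\quintic)$; consequently the 14 partial derivatives $\partial \quintic / \partial y_j$ vanish on $N'_8$ and lie in $I_4(N'_8)$.

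Uniqueness then follows quickly. If $\quintic'$ is another nonzero skew-invariant polynomial of degree $5$ whose derivatives lie in $I_4(N'_8) \subset I(N'_8)$, Euler's formula gives $\quintic' = \tfrac{1}{5}\sum_j y_j \, (\partial \quintic'/\partial y_j) \in I_5(N'_8)$, so $\quintic'$ is itself a skew quintic relation; the argument of the second paragraph, applied to $\quintic'$ in place of $f$, then shows $\quintic'$ vanishes on the irreducible degree-$5$ hypersurface $\quintic$, and so $\quintic' = c' \cdot \quintic$. The main obstacles in this plan are the divisibility-and-character computation in the second paragraph --- where one needs the $\SS_8$-isotypic structure of $\Sym^3 R_1(M_8)$ from Proposition~\ref{p:repfacts}(a) in a crucial way --- and the biduality invocation in the third paragraph, which converts the contraction statement about $D$ into a singularity statement about $\quintic$.
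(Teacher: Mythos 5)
Your proof is correct, but it takes a genuinely different route from the paper's. The paper identifies the dual map $D':\quintic\dashrightarrow\cubic$ with the map defined by the linear system $I_4(N'_8)$ (via the isomorphism $\Phi$ of Proposition~\ref{prop:I4}), deduces the relation $\partial\quintic/\partial\alpha_1(x)=P\cdot\alpha_2(x)$ for some nonzero homogeneous $P$, and then proves $P$ is a scalar by a separate lemma on common factors of the partials of a square-free polynomial; the degree statement, the membership of the partials in $I_4$, and the skew-invariance all then fall out of that identity, and uniqueness comes from Schur's lemma applied to the map $\lambda\mapsto\partial\quintic/\partial\lambda$ on the irreducible $I_4(N'_8)$. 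Your argument instead starts from the skew quintic relation $f$ produced in Proposition~\ref{p:repfacts}(c) and shows directly that $f$ vanishes on the hypersurface $\quintic$: you factor $(D^*f)|_\cubic=s\cdot h$ using the invariant section $s$ cutting out $\Sec(M_8)$ from the proof of Proposition~\ref{prop:I4}, observe $h$ must be skew of degree $3$ on $\cubic$, and kill it with Proposition~\ref{p:repfacts}(a). That is a new divisibility-plus-character-theory step that does not appear in the paper. You then replace the paper's use of Proposition~\ref{prop:I4} for the partial-derivative claim by a biduality argument ($D$ contracts two-dimensional fibers over a general point of $N'_8$, so such a point cannot be smooth on $\quintic$), and recover uniqueness by re-running your vanishing argument. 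Both routes lean on Proposition~\ref{prop:I4}, but for different pieces: the paper for the proportionality of $\partial\quintic/\partial\lambda$ with quartic relations, you only for the factoriality of $\cubic$ and the existence and invariance of $s$. The paper's route is heavier but yields the structural fact that $D'$ is literally given by the quartic relations; your route is lighter and has the bonus of exhibiting $\quintic$ explicitly as the (unique, as you also show) skew element of $I_5(N'_8)$, something the paper only establishes later in Proposition~\ref{uniqueskewquintic}. One small point you should make explicit: the invariance (rather than skew-invariance) of $s$ is not established in the proof of Proposition~\ref{prop:I4} itself but in the remark following Corollary~\ref{cor:I4}, so cite that remark; and when choosing two distinct smooth points $x_1\ne x_2$ of $\cubic$ in a general fiber of $D|_{\Sec(M_8)}$, you should note that the general fiber cannot lie entirely in $\Sing(\cubic)$ (else $\Sing(\cubic)\cap\Sec(M_8)$ would dominate $N'_8$ with two-dimensional fibers, forcing $\Sec(M_8)\subset\Sing(\cubic)$, contradicting the fact that $D$ is defined on a dense open subset of $\Sec(M_8)$).
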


\begin{remark}
Since the derivatives of $\quintic$ belong to $I_{\bullet}(N'_8)$, the Euler formula implies that $\quintic$ itself
belongs to $I_{\bullet}(N'_8)$.  (See the proof below.)
We will verify that $\quintic$ is in fact the {\em unique} skew
quintic relation in Proposition~\ref{uniqueskewquintic}. 
One might hope that the skew quintic is the signed sum of fifth powers
of Specht coordinates, in analogy with the situation for the cubic
(see Remark~\ref{march10}).  We can show that this signed sum is non-zero
by an analogous method, it is unfortunately {\em not}  a relation for $N_8$.
\end{remark}

\begin{proof}
The proof of Proposition~\ref{prop:I4} re-interprets the dual map $D': \quintic \dashrightarrow \cubic$ as coming from the linear system of the ``14 quartics relations of $N'_8$.''
We now make this precise.

Choose an isomorphisms of $\SS_8$-modules
$\alpha_1:R_1(M_8) \to R_1(N_8)^* \otimes \sgn$.
Let $\alpha_2:R_1(M_8) \to I_4(N'_8)$ be inverse to the isomorphism $\Phi$ of Proposition~\ref{prop:I4}.
Let
\begin{displaymath}
D'_1, D'_2:\P(R_1(N_8)^*) \dashrightarrow \P(R_1(M_8)^*)
\end{displaymath}
be the maps corresponding to the ring maps $\Sym(R_1(M_8)) \to \Sym(R_1(N_8))$ given by mapping $x \in R_1(M_8)$ to
$\frac{\partial \quintic}{\partial \alpha_1(x)}$ and $\alpha_2(x)$, respectively.

Then $D'_1|_{\quintic}$ is (essentially by definition) the dual map $D'$, and Proposition~\ref{prop:I4} interprets the 14-dimensional family of quartic relations of $N'_8$, after pulling back by $D$ to $\cubic$ and subtracting the base locus $\Sec(M_8)$, with the 14-dimensional family of sections of $\oh_{\proj R_1(M_8)}(1)$, so $D'_2|_{\cubic}$ is also the dual map $D'$.

Now the quartics in $I_4(N'_8)$ have no common divisorial component on the hypersurface $\quintic$. (Otherwise
their quotient by this divisor would be a lower-degree polynomial, which when pulled back to $\cubic$, would vanish
on $\Sec(M_8)$ but not on all of $\cubic$, which is impossible by Bezout's theorem.)  Thus, since $D'_1=D'_2$ on $\quintic$, there
must be a non-zero homogeneous polynomial $P$ such that
\begin{equation}
\label{eq4}
\frac{\partial \quintic}{\partial \alpha_1(x)} = P \alpha_2(x)
\end{equation}
for all $x \in R_1(M_8)$.  (A priori, this equality should be taken modulo $\quintic$, but 
the left side has degree less than $\deg \quintic$.)  By the following lemma, $P$ must be a scalar, and so by scaling $\alpha_1$ we can assume $P=1$.

Since $P$ has degree 0, the formula \eqref{eq4} implies that $\quintic$ has degree 5.  Let $\{x_i\}$ be a basis for
$R_1(N_8)$ and let $\{x_i^*\}$ be the dual basis.  The Euler formula
\begin{displaymath}
5 \quintic=\sum_{i=1}^{14} x_i \frac{\partial \quintic}{\partial x_i^*}=\sum_{i=1}^{14} x_i
\alpha_2(\alpha_1^{-1}(x_i^*))
\end{displaymath}
shows that $\quintic$ belongs to $I_5(N'_8)$ (since the image of $\alpha_2$ is $I_4(N'_8)$).  For $g \in \SS_8$, we
have
\begin{displaymath}
\frac{\partial \quintic}{\partial \alpha_1(gx)}=\alpha_2(gx)=g \alpha_2(x)=\frac{\partial (g \quintic)}{\partial
(g\alpha_1(x))}=\sgn(g) \frac{\partial (g \quintic)}{\partial \alpha_1(gx)}.
\end{displaymath}
The appearance of $\sgn$ in the above equation comes from the twist by $\sgn$ in the definition of $\alpha_1$.  The
above equation shows that all the derivatives of $\sgn(g) \quintic$ and $g \quintic$ agree, which implies that
$g \quintic=\sgn(g) \quintic$, i.e., $\quintic$ is skew-invariant under $\SS_8$.

We now verify that $\quintic$ is the unique skew-invariant element of $I_5(N'_8)$ (up to scalars) whose derivatives
belong to $I_4(N'_8)$.  Thus let $\quintic'$ be an arbitrary such element.  Let $\beta:R_1(N_8)^* \otimes \sgn
\to I_4(N'_8)$ be the map $\lambda \mapsto \frac{\partial \quintic}{\partial \lambda}$ and let $\beta'$ be the
analogous map for $\quintic'$.  Because $\quintic$ is skew-invariant, the map $\beta$ is $\SS_8$-equivariant, and
similarly for $\beta'$.  Since $I_4(N'_8)$ is irreducible under $\SS_8$, we
have $\beta'=c \beta$ for some scalar
$c$.  This implies that $\quintic'=c\quintic$, as was to be shown.
\end{proof}

\begin{lemma}
Let $Q$ be a square-free homogeneous polynomial of degree $>1$ in several variables over a field $\fieldk$.  Then the
partial derivatives of $Q$ have no common factor of degree $\ge 1$.
\end{lemma}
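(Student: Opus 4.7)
The plan is a standard Euler-formula argument. Suppose for contradiction that $P$ is a common factor of degree $\geq 1$ of all the partial derivatives $\partial Q/\partial x_i$. Since we are in characteristic $0$ and $\deg Q \geq 1$, Euler's formula
\begin{displaymath}
(\deg Q) \, Q \;=\; \sum_i x_i \, \frac{\partial Q}{\partial x_i}
\end{displaymath}
exhibits $Q$ as an $\fieldk[x_1,\dots,x_n]$-linear combination of the $\partial Q/\partial x_i$, so $P \mid Q$.

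Next I would exploit that $Q$ is square-free. Writing $Q = P R$ in the UFD $\fieldk[x_1,\dots,x_n]$, square-freeness of $Q$ forces $\gcd(P,R)=1$ (and also forces $P$ itself to be square-free). The Leibniz rule gives
\begin{displaymath}
\frac{\partial Q}{\partial x_i} \;=\; P\,\frac{\partial R}{\partial x_i} \;+\; R\,\frac{\partial P}{\partial x_i}.
\end{displaymath}
Since $P$ divides the left-hand side and the first term on the right, $P \mid R\,\partial P/\partial x_i$; coprimality of $P$ and $R$ then yields $P \mid \partial P/\partial x_i$ for every $i$.

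The final step is a degree comparison: $\partial P/\partial x_i$ has degree $\deg P - 1 < \deg P$, so $P \mid \partial P/\partial x_i$ forces $\partial P/\partial x_i = 0$ for all $i$. In characteristic $0$ this means $P$ is a constant, contradicting $\deg P \geq 1$. I do not foresee a serious obstacle here; the only mild subtlety is confirming that $P \mid Q$ implies $P$ is itself square-free (so that $\gcd(P,R)=1$), which is immediate in a UFD. The argument is entirely formal once Euler's formula is in hand.
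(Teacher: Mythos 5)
Your proof is correct. The one place where you diverge from the paper is in how you establish the divisibility $P \mid Q$: you invoke Euler's formula $(\deg Q)\,Q = \sum_i x_i\,\partial Q/\partial x_i$, which is short, purely algebraic, and immediately exhibits $Q$ in the ideal generated by its partials (using $\chr\fieldk = 0$ and $\deg Q \geq 1$). The paper instead reduces to an irreducible common factor $F$, argues geometrically that $dQ = 0$ on the hypersurface $\{F=0\}$ so $Q$ is constant there, and then uses homogeneity to conclude $Q$ vanishes on $\{F=0\}$ and hence $F \mid Q$ (a Nullstellensatz-flavored step). After that both arguments run through the Leibniz rule and coprimality; the paper lands on the contradiction $F^2 \mid Q$, while you push one step further to $\partial_i P = 0$ for all $i$, hence $P$ constant. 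Your route avoids both the reduction to the irreducible case and the appeal to the geometry of the zero locus, so it is arguably more elementary; what you lose is the slight extra transparency of the paper's $F^2 \mid Q$ endgame, which exhibits the failure of square-freeness directly. One tiny point you elide but which is automatic: since each $\partial Q/\partial x_i$ is homogeneous, any common factor $P$ of positive degree is itself homogeneous, so the degree bookkeeping is cleaner than you let on; but your final step ($P \mid \partial_i P$ forces $\partial_i P = 0$ by total-degree comparison, then char $0$ forces $P$ constant) does not actually need homogeneity of $P$.
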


\begin{proof}
Assume for the sake of contradiction that $F$ is an irreducible polynomial of degree $\ge 1$ dividing all the partial
derivatives of $Q$.  Necessarily, $F$ is homogeneous.  We have $dQ=0$ on the irreducible hypersurface $F=0$, and so
$Q$ is constant on $F=0$.  Since $Q$ and $F$ are homogeneous, we actually have $Q=0$ on $F=0$ and so $Q=FG$ for some
homogeneous polynomial $G$.  Let $x$ be an indeterminate appearing in $F$.  Then $\partial_x Q=(\partial_x G)F+
(\partial_x F) G$.  Since $F$ divides $\partial_x Q$ by assumption and $\partial_x F$ is non-zero and coprime to $F$,
we find that $F$ divides $G$.  This shows that $F^2$ divides $Q$, a contradiction.
\end{proof}

\cut{
\begin{proof}
{\bf Old proof of theorem, left largely unchagned}
Choose a $V_{4,4}$-subrepresentation in the space of quartic relations
of $N_8$.  (One exists, by Proposition~\ref{p:repfacts}(d).  The proof
will imply  that there is only one such
subrepresentation, see Remark~\ref{dubai}.)
Let $\quintic_1$, \dots, $\quintic_{14}$ be a basis for this vector space.
 Now $\quintic_i$ vanishes on $N_8$, and hence $D^* \quintic_i$ vanishes on
$D^* N_8 = \Sec(M_8)$.  By Proposition~\ref{p:datleast5}, $D^* \quintic_i$
does not vanish on all of $\cubic$ (as otherwise the dual of $\cubic$ would lie
in the quartic $\quintic_i=0$).    Now $D^* \quintic_i$ has degree $8$, so by
Bezout's theorem, $D^* \quintic_i \cap \cubic$ has degree $24$.   Furthermore $D^* \quintic_i
\cap \cubic$ contains $\Sec(M_8)$, so by
Corollary~\ref{c:2142}(b),
$\deg \Sec(M_8)=21$, and the residual divisor to $\Sec(M_8)$ in $D^*
\quintic_i$ has degree $3$.  


By the Lefschetz hyperplane theorem for Picard groups, $\Pic \proj V_{4,4} \rightarrow \Pic \cubic$ is an isomorphism
\cite{sga2}{\bf Look for this and the next ref in SGA2}.  Also, $\cubic$ is factorial (by
a theorem of Grothendieck  proving a conjecture of Samuel, implying
a hypersurface regular in codimension 3 is factorial,  \cite[Exp.\ XII,
Cor.\ 3.6]{sga2} --- this can also be shown by hand using
Nagata's criterion for factoriality \cite[Lem.\ 19.20]{e} applied to the explicit description of the
cubic given in Remark~\ref{r:explicit}).  Thus the residual divisor to $\Sec(M_8)$ in $D^*
\quintic_i \cap \cubic$ may be identified with
a section of $\mathcal{O}_{\cubic}(1)$, and Schur's lemma  identifies (up to
scalar) the following two
$V_{4,4}$-representations:
\begin{itemize}  
\item the 14-dimensional vector space 
$\langle \quintic_1, \dots, \quintic_{14} \rangle$ of quartic relations
generated by the $\quintic_i$
\item the 14-dimensional vector space $\Gamma(\proj V_{4,4},
  \mathcal{O}(1)) = R_1(M_8)$.
\end{itemize}
The dual map $D': \quintic' \dashrightarrow \cubic$ is given by  
$ [\frac {\partial \quintic' } { \partial y_1};
\dots; \frac{ \partial \quintic' }{ \partial y_{14} }]$, so  the rational maps $\quintic' \dashrightarrow \cubic$
\begin{equation}\label{blah}
\langle \quintic_1, 
\dots,  \quintic_{14} \rangle \text{  and  }  \langle \frac {\partial \quintic' } { \partial y_1};
\dots; \frac{ \partial \quintic' }{ \partial y_{14} } \rangle
\end{equation}
are both identified with the 14-dimensional vector space of sections of 
$\oh_{\proj V_{4,4}}(1)$.  Re-choose the basis for the quartic
relations of $N_8$ (i.e.\ re-choose $\quintic_1$, \dots,
$\quintic_{14}$) so that the two bases of \eqref{blah} match up
($\quintic_i$ corresponds to $\frac {\partial \quintic'} {\partial y_i}$).

Now  the quartics $\quintic_i$ have no common divisorial component on
the  hypersurface $\quintic'$. (Otherwise
their quotient by this divisor would be a
lower-degree polynomial, which when pulled back to $\cubic$, would vanish
on $\Sec(M_8)$ but not on all of $\cubic$, which is impossible by Bezout's
theorem.)  Thus  there must be an invariant homogeneous
polynomial $P$ such
that $\frac{ \partial \quintic'} {\partial y_i} = P \quintic_i$ for all $i$.  By Euler's
formula, $$0 \neq (\deg \quintic') \quintic' = \sum_{i=1}^{14} y_i
\frac{ \partial \quintic'} {\partial y_i} = P \sum_1^{14} y_i  \quintic_i.$$  In particular, $\sum y_i \quintic_i \neq 0$ is a
quintic skew relation.  Call this quintic skew relation $\quintic$.

We finally claim that $D$ maps $\cubic$ into $\quintic$:
on $\cubic$, we have  (as equality of sections of line bundles){\bf How
to remove the number in the top row?}
\begin{eqnarray}
5 D^* \quintic & = & D^* \sum y_i \quintic_i \quad \quad \text{(Euler's
  formula)}\\
&=& \sum (D^* y_i) (D^* \quintic_i) \label{star}
\end{eqnarray}
Let $x_1$, \dots, $x_{14}$ be a basis of $R_1(M_8) \cong V_{4,4} =
\Gamma( \oh_{\proj V_{4,4}}(1))$ corresponding to
$\quintic_1$, \dots, $\quintic_{14}$.
From our earlier identification of $D^* \quintic_i$ as the hyperplane
$\{ x_i=0 \}$ union  $\Sec  (M_8)$,
we have 
\begin{equation}\label{eq:denver}
D^* \quintic_i = x_i \cdot \Sec (M_8),
\end{equation}
where ``$\Sec (M_8)$'' should be interpreted as the tautological
section of $\mathcal{O}_{\cubic}( \Sec (M_8))$ with divisor $\Sec (M_8)$.  Also, $D^* y_i = \frac {\partial
  \cubic} {\partial x_i}$ (by the definition of the dual map $D$ from $\cubic$).
Thus \eqref{star} becomes
$$
5 D^* \quintic  = \sum_{i=1}^{14} \frac {\partial \cubic} {\partial x_i}  ( x_i  \cdot \Sec (M_8)).
$$
But on $\cubic$,
$\sum x_i \cubic_i  = 3 \cubic = 0$, so $D^* \quintic = 0$..
Thus as the dual of $\cubic$ is
of degree at most $5$ (Proposition~\ref{p:datleast5}) and is  contained in $\quintic$ which has degree $5$, the proof is
complete.\end{proof}

{\bf No longer needed}
\begin{remark}
We now see that there is a
{\em unique} $V_{4,4}$ in the space of quartic relations on $N_8$:  we
chose $\langle \quintic_1, \dots, \quintic_{14} \rangle$ to be any such, and
found that they must be the partial derivatives of the dual
hypersurface to $\cubic$.
We also see that there is a {\em unique} skew quintic relation (up to
scalar) --- it is the only quintic  whose partial derivatives are the $\quintic_i$.
\end{remark}
}

\subsection{The singular locus of $\quintic$ is $N'_8$}
\label{s:singQ}

The dual map $D': \quintic \dashrightarrow \cubic$ blows up precisely $\Sing \quintic$, which is cut out by the 14
quartics spanning $I_4(N'_8)$.  The exceptional divisor on $\cubic$ is the intersection of the pull-back of these
quartics under $D$, which (from the proof of Proposition~\ref{prop:I4}) is $\Sec (M_8)$.  But $D(\Sec(M_8)) = N'_8$.  Thus we have shown
the following.

\begin{theorem}
\label{t:singQ}
We have $\Sing(\quintic) = N'_8$.
\end{theorem}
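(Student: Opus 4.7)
\noindent
The plan is to reduce the theorem to three ingredients already in hand: an identification of $\Sing(\quintic)$ with the common vanishing locus of $I_4(N'_8)$; the factorization of $D^*\mc R$ coming from Proposition~\ref{prop:I4}; and the fact $D(\Sec(M_8))=N'_8$ from Theorem~\ref{t:cooltheorem}.

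First I would pin down the singular locus. Since $\quintic$ has degree $5$ in characteristic $0$, Euler's formula expresses $\quintic$ itself as a $\fieldk$-linear combination of $x_i\,\partial\quintic/\partial x_i$, so $V(\partial\quintic)\subseteq V(\quintic)$ and hence $\Sing(\quintic)=V(\partial\quintic)$ set-theoretically. By Theorem~\ref{t:Qdual}, the assignment $\lambda\mapsto \partial\quintic/\partial \alpha_1^{-1}(\lambda)$ is an $\SS_8$-equivariant, non-zero map between two $14$-dimensional irreducible representations of type $V_{4,4}$, so Schur's lemma makes it an isomorphism; in particular the partials span $I_4(N'_8)$, and $\Sing(\quintic)=V(I_4(N'_8))$.

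The inclusion $N'_8\subseteq\Sing(\quintic)$ is then immediate, since every element of $I_4(N'_8)$ vanishes on $N'_8$ by definition. For the opposite inclusion I would pull the generators of $I_4(N'_8)$ back along $D:\cubic\dashrightarrow\quintic$: by Proposition~\ref{prop:I4}, each $D^*\mc R$ factors on $\cubic$ as $s\cdot\Phi(\mc R)$ with $s$ the (unique up to scalar) degree-$7$ section cutting out $\Sec(M_8)$ and $\Phi(\mc R)\in R_1(M_8)$ a linear form. Because $\Phi\colon I_4(N'_8)\xrightarrow{\sim} R_1(M_8)$ is an isomorphism, the $14$ linear forms $\Phi(\mc R)$ span all linear forms and so have empty common zero locus in $\P^{13}$. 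Consequently, on $\cubic\setminus M_8$ (where $D$ is a morphism), the common zero locus of $D^*I_4(N'_8)$ is exactly $\Sec(M_8)\setminus M_8$; equivalently, $D^{-1}(\Sing(\quintic))=\Sec(M_8)\setminus M_8$ on this open set.

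Finally I would invoke biduality. Since $D$ and $D'$ are inverse birational maps (so their graphs agree inside $\cubic\times\quintic$), the exceptional locus of $D$ on $\cubic$ is precisely $D^{-1}(\mathrm{Ind}(D'))=D^{-1}(\Sing(\quintic))$, which by the previous step is $\Sec(M_8)$ (after taking closures). But $D$ sends this exceptional locus onto $\mathrm{Ind}(D')=\Sing(\quintic)$, and by Theorem~\ref{t:cooltheorem}, $D$ sends $\Sec(M_8)$ dominantly onto $N'_8$. Comparing the two descriptions gives $\Sing(\quintic)=N'_8$.

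The main obstacle is the last step: one needs to be careful in identifying the "closure of the image of the exceptional divisor" with $\Sing(\quintic)$, since $D$ is only a morphism on $\cubic\setminus M_8$ and $M_8\subset\Sec(M_8)$. This is handled either by passing to a common resolution of indeterminacy of $D$ and $D'$ and using properness of both projections, or more concretely by noting that Theorem~\ref{t:cooltheorem} already gives $D(\Sec(M_8)\setminus M_8)$ dense in $N'_8$, so the closed subset $V(I_4(N'_8))\subseteq\quintic$ contains $N'_8$ and has no additional irreducible components (any such would have to arise as the image of a positive-dimensional fiber of $D$ disjoint from $\Sec(M_8)\setminus M_8$, contradicting $D^{-1}(V(I_4(N'_8)))=\Sec(M_8)$).
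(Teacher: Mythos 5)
Your proposal is correct and follows essentially the same route as the paper: identify $\Sing(\quintic)$ with the common zero locus of $I_4(N'_8)$ via Theorem~\ref{t:Qdual}, pull the quartics back along $D$ and use the factorization from Proposition~\ref{prop:I4} to see their common zero locus on $\cubic$ is $\Sec(M_8)$, and then invoke $D(\Sec(M_8))=N'_8$ from Theorem~\ref{t:cooltheorem}. You spell out the birational-geometry bookkeeping (biduality, resolution of indeterminacy, properness) that the paper's one-paragraph argument leaves implicit, which is a reasonable elaboration rather than a different method.
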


Also, the Hessian $\hessian_{\cubic}$ vanishes precisely along the exceptional divisor, so we can refine Proposition~\ref{p:gregsmith} to the following.

\begin{proposition}
\label{p:H2}
The Hessian $\hessian_{\cubic}$ vanishes to order $2$ along $\Sec(M_8)$.
\end{proposition}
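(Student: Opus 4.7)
\textbf{Proof plan for Proposition~\ref{p:H2}.}
The plan is to determine the multiplicity of $\Sec(M_8)$ in the effective Cartier divisor $\hessian_\cubic \cap \cubic$ on $\cubic$ by combining the degree bookkeeping provided by Proposition~\ref{p:gregsmith} and Corollary~\ref{c:21} with a plane-section argument. First, since $\hessian_\cubic$ does not contain $\cubic$ (Proposition~\ref{p:gregsmith}), the intersection $\hessian_\cubic \cap \cubic$ is a proper complete intersection of hypersurfaces of degrees $14$ and $3$ in $\proj^{13}$, hence a pure $11$-dimensional effective Cartier divisor on $\cubic$ of total degree $42$. Writing this divisor as
\[
\hessian_\cubic \cap \cubic \;=\; m\cdot \Sec(M_8) + D,
\]
where $m \geq 1$ is the multiplicity along $\Sec(M_8)$ (a component by \S\ref{s:threeone}) and $D$ is an effective divisor supported on other $11$-dimensional subvarieties, Corollary~\ref{c:21} gives $\deg \Sec(M_8) = 21$, and Proposition~\ref{p:gregsmith} restricts every irreducible component of $\hessian_\cubic \cap \cubic$ to have degree $21$ or $42$. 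The equation $21m + \deg D = 42$ together with $\deg \Sec(M_8) \neq 42$ then leaves only two possibilities: either $(m,D)=(2,0)$ (the desired conclusion) or $m=1$ with $D = V'$ irreducible of degree $21$ and $V' \neq \Sec(M_8)$.

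The main step is to rule out the $(1,V')$ case by restricting to a general plane $\proj^2 \subset \proj^{13}$. For such a plane, $\Sec(M_8) \cap \proj^2$ consists of $21$ distinct reduced points, and likewise $V' \cap \proj^2$ consists of $21$ distinct reduced points; since $V'$ and $\Sec(M_8)$ are distinct $11$-dimensional subvarieties of $\proj^{13}$, their intersection has dimension at most $10$, so a general $\proj^2$ misses this intersection and the two $21$-point sets are disjoint. Thus in the $(1,V')$ case the plane section $\hessian_\cubic \cap \cubic \cap \proj^2$ would have support of cardinality $42$, each point of length $1$. This contradicts Proposition~\ref{p:gregsmith}, whose proof identifies this plane section as a single Galois orbit of $21$ points, each appearing with multiplicity $2$.

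Therefore $(m,D)=(2,0)$, which is exactly the assertion that $\hessian_\cubic$ vanishes to order $2$ along $\Sec(M_8)$. The genuinely substantive input is the plane-section computation of Proposition~\ref{p:gregsmith}, which was done only over $\Q$; but the multiplicity of a subvariety in a divisor is invariant under extension of scalars, so this suffices to yield the conclusion over any characteristic-zero field $\fieldk$.
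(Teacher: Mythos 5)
Your reduction to the two cases $(m,D)=(2,0)$ or $m=1$ with $D=V'$ irreducible of degree $21$ is sound, but the step that rules out the second case has a genuine gap. The computer calculation in the proof of Proposition~\ref{p:gregsmith} concerns one specific, ``suitably chosen'' plane over $\Q$, whereas your $42$ reduced points live on a \emph{general} plane; these are different planes, so no contradiction results. Concretely, if $\hessian_{\cubic}\cap\cubic$ were the reduced divisor $\Sec(M_8)+V'$, a particular plane could still meet it in $21$ points each of length $2$: the plane could be tangent, pass through singular points, or pass through the (at most $10$-dimensional) locus $\Sec(M_8)\cap V'$. In fact the Galois structure of the computed section works against you: the support is a single $\Q$-irreducible orbit of $21$ points, and every $\Q$-component of $\hessian_{\cubic}\cap\cubic$ meets the plane in a nonempty Galois-stable subset of that orbit, hence in all $21$ points; so in the hypothetical two-component case both $\Sec(M_8)$ and $V'$ would pass through every one of the $21$ points, producing local multiplicity exactly $2$ at each --- precisely what the computer reports. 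Thus the computation of Proposition~\ref{p:gregsmith} is consistent with both cases and cannot by itself determine the multiplicity; you would need to know that the computed plane is general with respect to the unknown $V'$, which cannot be verified.

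The paper closes this gap by a different mechanism: using Proposition~\ref{prop:I4}, Theorem~\ref{t:Qdual} and \S\ref{s:singQ}, it identifies the vanishing locus of $\hessian_{\cubic}$ on $\cubic$ with the exceptional divisor of the dual map $D:\cubic\dashrightarrow\quintic$, and shows that this exceptional divisor is exactly $\Sec(M_8)$. There is then no second component $V'$ at all, and $42=\deg(\hessian_{\cubic}\cap\cubic)=m\cdot\deg\Sec(M_8)=21m$ forces $m=2$. Some geometric input of this kind, beyond Proposition~\ref{p:gregsmith} and Corollary~\ref{c:21}, is needed to exclude $V'$; degree bookkeeping plus the single plane-section computation does not suffice.
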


Thus the Hessian is a perfect square modulo $\cubic$. 
Specifically, if $s$ is the invariant section of $\oh(7)|_{\cubic}$ appearing in the proof of Proposition~\ref{prop:I4}, any $\overline{s}$ is any lift of $s$ to a section of $\oh(7)$ 
(which can be taken to be invariant), then the Hessian is $\overline{s}^2$ modulo $\cubic$.

We make a small remark about another invariant septic. The pullback of the skew quintic $D^* \quintic$ to $\proj(R_1(M_8)^*)$ has degree 10, and vanishes on the skew cubic $\cubic$.  The residual divisor to $\cubic$ in $D^* \quintic$ is an invariant septic.  This septic contains $M_8$.  Because we will not use this fact, we omit the proof.

\cut{ We now establish this fact.

\subsection{The invariant septic $\septic$ in $\proj(R_1(M_8)^*)$}
\label{s:septictank}

The pullback of the skew quintic $D^* \quintic$ to $\proj(R_1(M_8)^*)$ has degree 10, and vanishes on the skew cubic
$\cubic$.  The residual divisor to $\cubic$ in $D^* \quintic$ is an invariant septic $\septic$.

\begin{proposition}
\label{mumbai}
The septic $\septic$ meets $\cubic$ along $\Sec (M_8)$ (with multiplicity one, as $\deg(\Sec(M_8)) = 21$).  In
particular, $\hessian_{\cubic} \equiv \septic^2 \pmod \cubic$.
\end{proposition}

{\bf Ravi --- I think there were two serious errors in your proof!  I managed to fix one (the key observation is
the lemma following the proof), but the other one seems more serious.}

\begin{proof}
Pick an isomorphism $\mc{R}:R_1(M_8) \to I_4(N'_8)$.  As the restriction of the octic $D^*(\mc{R}(x))$ to $\cubic$ is
$V(x)|_{\cubic}$ union $\Sec(M_8)$ {\bf put in ref}, we see that $D^*(\mc{R}(x))$ can be written in the form
$Ax+\cubic B$, with $A$ septic and $B$ quintic (both depending on $x$).  Define maps
\begin{displaymath}
\alpha:R_1(M_8) \to \Sym^8(R_1(M_8))/(\cubic), \qquad x \mapsto D^*(\mc{R}(x))
\end{displaymath}
and
\begin{displaymath}
\beta:R_1(M_8) \otimes (\Sym^7(M_8)/(\cubic)) \to \Sym^8(R_1(M_8))/(\cubic),
\qquad x \otimes y \mapsto xy.
\end{displaymath}
(Here we abuse notation and write $(\cubic)$ when we really mean the appropriate degree piece of the principal ideal
$(\cubic)$.)
Our previous comment shows that for all $x \in R_1(M_8)$ there exists $y \in \Sym^7(M_8)/(\cubic)$
such that $\alpha(x)=\beta(x \otimes y)$.  Furthermore, the element $y$ is unique.  For if $y'$ were another such
element then $(y-y')x$ would equal 0 in $\Sym^{\bullet}(R_1(M_8))/(\cubic)$.  Since the polynomial $\cubic$ is
irreducible, and does not divide $x$, it must divide $y-y'$, that is to say, $y=y'$ modulo $\cubic$.  Applying the
following lemma to our situation, we see that there exists a unique element $y$ of $\Sym^7(M_8)/(\cubic)$
such that $\alpha(x)=\beta(x \otimes y)$ holds for all $x \in R_1(M_8)$.  Since $\alpha$ and
$\beta$ are $\SS_8$-equivariant and $y$ is unique, it follows that $y$ is invariant under $\SS_8$.  Lift $y$ to
an invariant element $A$ of $\Sym^7(M_8)$.  We thus find that $D^*(\mc{R}(x))$ can be written in the form
$Ax+\cubic B(x)$ for this specific element $A$ and a unique element $B(x)$ of $\Sym^5(R_1(M_8))$.  Note that
$D^*(\mc{R}(x))$ is equal to $Ax$ on $\cubic$.  Since $D^*(\mc{R}(x))$ vanishes on $\Sec(M_8) \subset \cubic$ for all
$x$, it follows that $A$ vanishes on $\Sec(M_8)$ as well.

Pick a basis $x_1, \ldots, x_{14}$ of $R_1(M_8)$, an isomorphism $\eta:R_1(M_8) \to R_1(N_8)$ and an isomorphism
$(-)^{\vee}:R_1(M_8) \to R_1(M_8)^*$.  {\bf is all this really needed?} By Euler's formula,
\begin{align*}
5 D^*(\quintic)
&= \sum_{i=1}^{14} D^* \left( \eta(x_i) \frac{\partial \quintic}{\partial \eta(x_i^{\vee})} \right) \\
&= \sum_{i=1}^{14} D^* \left( \eta(x_i) \mc{R}(x_i) \right) \\
&= \sum_{i=1}^{14} \left( D^* \eta(x_i) \right) \left( D^* \mc{R}(x_i) \right) \\
&= \sum_{i=1}^{14} \left( \frac{\partial \cubic}{\partial x_i^{\vee}} \right) \left( Ax_i + B(x_i)\cubic \right) \\
&= A \sum_{i=1}^{14} \left( x_i \frac{\partial \cubic}{\partial x_i^{\vee}} \right)
   + \cubic \sum_{i=1}^{14} \frac{\partial \cubic}{\partial x_i^{\vee}} B(x_i) \\
&= \cubic \left( 3A  + \sum_{i=1}^{14} \frac{\partial \cubic}{\partial x_i^{\vee}} B(x_i) \right)
\end{align*}
We thus have
\begin{displaymath}
\septic = 3A + \sum_{i=1}^{14} \frac{\partial \cubic}{\partial x_i^{\vee}} B(x_i).
\end{displaymath}
But $A$ and $\frac{\partial \cubic}{\partial x_i^{\vee}}$ both vanish on $\Sec(M_8)=\Sing(\cubic)$, so the result
follows.  {\bf Wait!  Isn't $\Sing(\cubic)=M_8$, not $\Sec(M_8$?  This seems like a problem!}
\end{proof}

\begin{lemma}
Let $\alpha:V \to W$ and $\beta:V \otimes U \to W$ be maps of vector spaces.  Assume that for all non-zero $v \in V$
there exists a unique $u \in U$ such that $\alpha(v)=\beta(v \otimes u)$.  Then there exists a unique $u \in U$ such
that $\alpha(v)=\beta(v \otimes u)$ holds for all $v \in V$.
\end{lemma}

\begin{proof}
For a non-zero $v \in V$ let $\gamma(v)$ be the unique element of $U$ such that $\alpha(v)=\beta(v \otimes \gamma(v))$.
Clearly, $\gamma(\lambda v)=\gamma(v)$ for scalars $\lambda$.  Thus $\gamma$ defines a function $\P(V) \to U$ (clearly
algebraic), and so is constant.
\end{proof}

\begin{proof}
{\bf old proof, left unchanged of proposition}
As the restriction of the octic $D^* \quintic_i$ to $\cubic$  is 
$V(x_1)|_{\cubic}$ union $\Sec (M_8)$,
$D^* \quintic_1  = x_1 S_1 + \cubic R_1$ (after suitably scaling the octic), where $S_1$ is septic and $R_1$ is
quintic.
Define $S_i$ and $R_i$  ($2 \leq i \leq 14$){\bf This is a bit bogus}
by applying $\SS_8$, so we have $D^* \quintic_i = x_i S_i + \cubic R_i$, where the
$\langle S_i \rangle$ and $\langle R_i \rangle$ form $\SS_8$-representations.    

The $D^* \quintic_i = D^* \frac { \partial \quintic} {\partial y_i}$ form a  representation of
type $V_{4,4}$, so $x_i S_i + \cubic R_i$ do  too.  Let $S'$ be the
projection of any $S_i$ to the invariant part of the representation
generated by the $S_i$ (an invariant septic), and let $R'_i$ be the projection of $R_i$ to
the $V_{2,2,2,2}$-part of the representation generated by the $R_i$. 
Then  $D^* \quintic_i$, $x_i S'$, and $\cubic R'_i$ each are 
$V_{4,4}$-isotypic, and $D^* \quintic_i = x_i S' + \cubic R'_i$.  {\bf
  I think \emph{this} is the bogus part}.
By restricting to $\cubic$ and recalling again that $D^* \quintic_i
|_{\cubic} = V(x_i)|_{\cubic} \cup \Sec (
M_8)$, we see that $V(S') \cap \cubic = \Sec (M_8)$.
By Euler's formula,
\begin{eqnarray*}
5 D^* \quintic &=& \sum_{i=1}^{14} D^* \left( y_i \quintic_i  \right)\\
&=& \sum_{i=1}^{14} \left( D^* y_i \right) \left( D^* \quintic_i
\right) \\
&=& \sum_{i=1}^{14}  \left(   \frac {\partial \cubic} {\partial x_i}
\right)  \left(  x_i S' + \cubic R'_i \right) \\
&=& \sum_{i=1}^{14}  \left(  x_i \frac {\partial \cubic} {\partial x_i}
\right)  S'
    + \cubic \sum_{i=1}^{14}  \frac {\partial \cubic} {\partial x_i}  R'_i \\
&=& \cubic \left(  3 S'  + \sum  \frac {\partial \cubic} {\partial x_i}  R'_i
\right)
\end{eqnarray*}
Thus $\septic = 3 S' + \sum \frac{ \partial \cubic} {\partial x_i}   R'_i$.

But $S'$ and $\frac {\partial \cubic} {\partial x_i}$ both vanish on
$\Sec(M_8) = \Sing(\cubic)$, so the result follows.
\end{proof}

{\bf Does this proposition say that $\Sec(M_8)$ is a global complete intersection, in fact, defined by the vanishing
of two very nice polynomials?  Perhaps this should be pointed out more explicitly?}

\subsection{The invariant septic $\septic'$ in $\proj^{13 \vee}$}

{\bf this is now garbage}
We can now apply a similar argument on the dual side.
The degree 12 skew polynomial $(D')^* \cubic$ in the $y_i$ vanishes on
the skew quintic $\quintic$. ,Let
$T = (D')^* \cubic/ \quintic$ be the quotient invariant septic.

\begin{proposition}
The invariant septic $T$ vanishes on the exceptional divisor of $D'$,
$V(H(\quintic))$.
\end{proposition}

\begin{proof}
We parallel the proof of Proposition~\ref{mumbai}.  
Let $\cubic_i = \frac {\partial \cubic} {\partial x_i}$ for convenience.
We write $(D')^* \cubic_1 =  y_i T_1 + \quintic U_1${\bf Clean up notation}, 
then define $T_i$ and $U_i$ using the $\SS_8$-action.  
After projecting the $T_i$ to the invariant part of the representation
they generate,
and the $U_i$ to the $V_{4,4}$-isotypic part of the representation
they generate, we may assume that the $T_i$ are all equal (call them
$T'$).

Recall that by the Lefschetz hyperplane theorem,
$\Pic \proj^{13 \vee} \rightarrow \Pic \quintic$ is an isomorphism.
We use this to define the degree of a divisor on $\Pic \quintic$.

Note that $\hessian_{\quintic}$ is irreducible, for the following reason.
$\deg \hessian_{\cubic} = 42$...
Problem here... there could be 35 linear sections, plus a degree 7
remainder.
{\bf Gap}

Then $\deg V(\hessian_{\cubic})$ is a factor of $42$.  But also it is a factor of $7$.
I want to show that it is $7$, and thus that the Hessian vanishes to
order $6$ along it.
\end{proof}

}

\section{The partial derivatives of $\cubic$ have no linear syzygies I}
\label{s:syzygy1}

The goal of \S \ref{s:syzygy1}--\ref{s:syzygy3} is to establish the following:

\begin{theorem}
\label{prop:nosyz}
The partial derivatives of $\cubic$ have no linear syzygies.
\end{theorem}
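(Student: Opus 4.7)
A linear syzygy among the partial derivatives of $\cubic$ is by definition an element of the kernel of the $\SS_8$-equivariant multiplication map
$$\mu \colon R_1(M_8) \otimes I_2(M_8) \longrightarrow I_3(M_8) \subset \Sym^3 R_1(M_8),$$
so the theorem says $\ker \mu = 0$. A first organising observation comes from a dimension count using the Hilbert series \eqref{howenowe}: one computes $\dim I_3(M_8) = 196 = 14 \cdot 14 = \dim (R_1(M_8) \otimes I_2(M_8))$, so the claim is in fact equivalent to $\mu$ being an isomorphism. This is useful because the job reduces to \emph{either} injectivity \emph{or} surjectivity on each isotypic piece.

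The strategy is representation-theoretic, exploiting $\SS_8$-equivariance. By Proposition~\ref{p:repfacts}(b) and Schur--Weyl duality we have $I_2(M_8) \cong V_{2,2,2,2}$ and $R_1(M_8) \cong V_{4,4}$, so
$$R_1(M_8) \otimes I_2(M_8) \cong V_{4,4} \otimes V_{2,2,2,2} \cong V_{4,4}^{\otimes 2} \otimes \sgn,$$
whose irreducible decomposition is read off from Proposition~\ref{prop:decomp}. On the target side, Schur--Weyl duality gives
$$R_3(M_8) = \bigl((\Sym^3 \fieldk^2)^{\otimes 8}\bigr)^{\SL(2)} = \bigoplus_\lambda \mathbb{S}_\lambda(\Sym^3\fieldk^2)^{\SL(2)} \otimes V_\lambda,$$
from which the $\SS_8$-character of $I_3(M_8) = \ker(\Sym^3 R_1(M_8) \twoheadrightarrow R_3(M_8))$ is obtained by a character subtraction. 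With both sides decomposed, Schur's lemma constrains $\mu$ to a single linear map between the multiplicity spaces of each irreducible $V_\lambda$.

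The remaining content of the theorem is therefore: for each $V_\lambda$ appearing in $R_1(M_8) \otimes I_2(M_8)$, the induced map on multiplicity spaces has maximal rank. The plan is to verify this by producing, for each relevant $\lambda$, an explicit test vector in the $V_\lambda$-isotypic piece --- for instance a product of a Kempe generator from Figure~\ref{fig:noncross} with a simple binomial quadratic relation of type \eqref{e:simplebinomial} --- and showing, using the tableau/non-crossing-graph calculus of \S \ref{ss:p1}, that its image in $R_3(M_8)$ is non-zero.

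The main obstacle is this last step: matching multiplicities is insufficient on its own, and a brute combinatorial check for each of the irreducibles that appears threatens to be long and uninformative. The plan is to replace it by a Lie-theoretic argument. The multiplication $\mu$ is induced, factorwise, from the $\SL(2)$-equivariant Clebsch--Gordan surjection
$$\fieldk^2 \otimes \Sym^2 \fieldk^2 \twoheadrightarrow \Sym^3 \fieldk^2$$
after passing to $\SL(2)$-invariants of the $8$-fold tensor power. The goal is to use this structure (via highest-weight-vector considerations, or equivalently a Howe-duality argument pairing $\SL(2)$ with its commutant) to reduce the required non-vanishings to a uniform statement about $\SL(2)$-modules that handles all $V_\lambda$ at once. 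This is where Lie theory, rather than representation theory of $\SS_8$ alone, enters essentially, and is the crux of the proof that \S \ref{s:syzygy1}--\ref{s:syzygy3} carry out.
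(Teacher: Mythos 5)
Your set-up is sound --- the dimension count $\dim I_3(M_8)=196=\dim\bigl(R_1(M_8)\otimes I_2(M_8)\bigr)$ is correct, and decomposing $\mu$ by $\SS_8$-isotypic pieces using Proposition~\ref{prop:decomp} is a reasonable first move --- but the proposal stops just where the proof begins. You correctly observe that matching multiplicities is insufficient and that a brute test-vector check would be long, but the ``Lie-theoretic argument'' you sketch to replace it does not hold up. The multiplication $R_1(M_8)\otimes I_2(M_8)\to I_3(M_8)$ takes place inside the $14$-variable polynomial ring $\Sym^\bullet R_1(M_8)$, and is \emph{not} induced from the $\SL(2)$ Clebsch--Gordan surjection $\fieldk^2\otimes\Sym^2\fieldk^2\to\Sym^3\fieldk^2$; that surjection instead induces the multiplication $R_1\otimes R_2\to R_3$ in the quotient ring $R_\bullet(M_8)$, and the ideal $I_\bullet$ is precisely what dies under $\Sym^\bullet R_1\twoheadrightarrow R_\bullet$, so your would-be factorwise description has the wrong target. (You also write of checking that a test vector has nonzero ``image in $R_3(M_8)$''; by definition everything in $I_3(M_8)$ maps to zero there.) No mechanism is actually supplied for ``handling all $V_\lambda$ at once.''

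The idea the paper uses is different and worth isolating. Via the isomorphism $R_1(M_8)^*\otimes\sgn\xrightarrow{\ \sim\ }I_2(M_8)$, $\lambda\mapsto\partial\cubic/\partial\lambda$, the source $R_1(M_8)\otimes I_2(M_8)$ is identified (up to a sign twist) with $\End(R_1(M_8))=\mf{gl}(R_1(M_8))$, and under this identification $\mu$ becomes the map $X\mapsto X\cdot\cubic$ given by the natural action of $\mf{gl}(R_1(M_8))$ on $\Sym^3(R_1(M_8))$. Hence the syzygy space is the \emph{annihilator} of $\cubic$ in $\mf{gl}(R_1(M_8))$, which is a Lie subalgebra (it is the Lie algebra of the stabilizer of $\cubic$), and it is $\SS_8$-stable because $\cubic$ spans a one-dimensional $\SS_8$-stable line. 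This is the step that makes the problem tractable: one then classifies the $\SS_8$-stable Lie subalgebras of $\mf{sl}(R_1(M_8))$ (using primitivity of $V_{4,4}$, its self-duality, and the classification of simple Lie algebras to show the only candidates are $0$, $\mf{so}(14)$, $\mf{sl}(14)$), and finally checks directly that $\mf{so}(14)$ annihilates no nonzero cubic form, because any form killed by all the $E_{ij}=x_i\partial_j-x_j\partial_i$ would have to be even in every variable. Your proposal has neither the identification of the syzygy space as a Lie algebra stabilizer nor the classification step, so as written it does not constitute a proof, and the Lie theory it invokes is unrelated to the Lie theory the argument actually needs.
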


We now elaborate on the statement of the theorem.  Pick a basis $x_1, \ldots, x_{14}$ of $R_1(M_8)$ (we can then make
sense of $\frac{\partial \cubic}{\partial x_i}$ by defining it to be $\frac{\partial \cubic}{\partial x_i^*}$, where
$x_i^*$ is the dual basis).  The theorem is then the statement that if $y_1, \ldots, y_{14}$ are elements of
$R_1(M_8)$ such that $\sum_{i=1}^{14} y_i \frac{\partial \cubic}{\partial x_i}=0$ then $y_i=0$ for all $i$.  We will
give a more canonical reformulation of this statement below.  In this section, we will reduce the proof of
Theorem~\ref{prop:nosyz} to a problem that we will solve in \S \ref{s:syzygy3}.  We first note an important consequence.

\begin{corollary}
\label{prop:i2-i3}\label{generation}
The ideal $I_{\bullet}(M_8)$ is generated in degree $2$.
\end{corollary}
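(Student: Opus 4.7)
The plan is to combine Theorem~\ref{prop:nosyz} with a short dimension count, using as additional input from \cite{hmsv2} that $I_{\bullet}(M_8)$ is generated in degrees at most $3$. Granting this, the corollary reduces to the single assertion
$$I_3(M_8) \;=\; R_1(M_8) \cdot I_2(M_8).$$

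By Proposition~\ref{p:repfacts}(b) together with the discussion of \S\ref{s:threeone}, the fourteen partial derivatives of $\cubic$ span $I_2(M_8)$. Theorem~\ref{prop:nosyz} is then precisely the statement that the multiplication map
$$\mu \colon R_1(M_8) \otimes I_2(M_8) \longrightarrow \Sym^3 R_1(M_8)$$
is injective. Its image lies in $I_3(M_8)$ and has dimension $14 \times 14 = 196$.

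To close the argument, I would compute $\dim I_3(M_8)$ from the Hilbert series $(1+8t+22t^2+8t^3+t^4)/(1-t)^6$ recorded in \S\ref{s:miscalgebra}: this yields $\dim R_3(M_8) = 364$, hence
$$\dim I_3(M_8) \;=\; \binom{16}{3} - 364 \;=\; 560 - 364 \;=\; 196.$$
Injectivity of $\mu$ combined with matching dimensions forces $\mu$ to be an isomorphism onto $I_3(M_8)$, giving the desired equality. All substantive work is in Theorem~\ref{prop:nosyz} itself (carried out in \S\ref{s:syzygy1}--\S\ref{s:syzygy3}); the main obstacle is thus really internal to that theorem, and the corollary itself is a dimension tally plus invocation of \cite{hmsv2}.
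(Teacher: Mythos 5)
Your proof is correct and matches the paper's argument essentially line for line: both reduce via \cite{hmsv2} (Thm.~5.1 there) to showing $I_3(M_8) = R_1(M_8)\cdot I_2(M_8)$, invoke Theorem~\ref{prop:nosyz} for injectivity of the $196$-dimensional multiplication map, and finish by matching against $\dim I_3(M_8) = 196$. The only cosmetic difference is how the latter dimension is obtained: you extract $\dim R_3(M_8)=364$ from the classically known Hilbert series, while the paper counts noncrossing graphs directly; both are valid and non-circular.
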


Another proof, avoiding the complicated toric degeneration of \cite{hmsv2}, will be given in \S \ref{s:freeresolution},
see Remark~\ref{r:denver}.

\begin{proof}
By \cite[Thm.~5.1]{hmsv2}, $I_{\bullet}(M_8)$ is cut out by quadratics and cubics.  One can readily check by hand
(counting noncrossing graphs) that $\dim (I_3(M_8)) = 14^2$.  By Theorem~\ref{prop:nosyz}, the map of $196$-dimensional
vector spaces $R_1(M_8) \otimes I_2(M_8) \to I_3(M_8)$ has no kernel and is thus surjective.
\end{proof}

\begin{remark}
 As remarked in \S \ref{mainconstructions}, Theorem~\ref{prop:nosyz} holds away from characteristic $3$.  In
characteristic $3$, the Euler formula yields a linear syzygy among the 14 quadratic relations, and the skew cubic
$\cubic$ is the remaining generator of the ideal.  See \cite[Thm 1.2
and  \S 9]{hmsve} for a proof.  
This argument requires a computer, unlike the proof of Theorem~\ref{prop:nosyz}.
\end{remark}

We prove Theorem~\ref{prop:nosyz} by the following strategy.

\begin{enumerate}
\item[(a)] Let $\Psi:\mf{gl}(R_1(M_8)) \to I_3(M_8)$ be the map defined via a natural action of the Lie algebra
$\mf{gl}(R_1(M_8)) \cong \mf{gl}(14)$ on $\Sym^3(R_1(M_8))$ (described below).  We first observe that the space of
linear syzygies between the partial derivatives of $\cubic$ is exactly $\mf{g}=\ker{\Psi}$.  We note that $\mf{g}$ is a
Lie subalgebra of $\mf{sl}(R_1(M_8))$ and is stable under the action of $\SS_8$.
\item[(b)] Next, using general theory developed in \S \ref{s-liesub} concerning $G$-stable Lie subalgebras of
$\mf{sl}(V)$, where $V$ is a representation of $G$, and the classification of simple Lie algebras, we show that the
only $\SS_8$-stable Lie subalgebras of $\mf{sl}(R_1(M_8))$ are 0, $\mf{so}(14)$ and $\mf{sl}(14)$.  Thus $\mf{g}$ must
be one of these three Lie algebras.
\item[(c)] Finally, we show that $\mf{so}(14)$ does not annihilate any non-zero cubic.  As $\mf{g}$ is the annihilator
of $\cubic$ (and $\mf{so}(14) \subset \mf{sl}(14)$), we conclude $\mf{g}=0$.
\end{enumerate}

We now implement this strategy.  Consider the composition
\begin{displaymath}
\begin{split}
\wt{\Psi}:& \End(R_1(M_8)) \otimes \Sym^3(R_1(M_8)) = R_1(M_8) \otimes
R_1(M_8)^* \otimes \Sym^3(R_1(M_8)) \to \\
& R_1(M_8) \otimes \Sym^2(R_1(M_8)) \to \Sym^3(R_1(M_8))
\end{split}
\end{displaymath}
where the first map is the partial derivative map and the second map is the multiplication map.  One easily verifies
that $\wt{\Psi}$ is just the map which expresses the action of the Lie algebra $\mf{gl}(R_1(M_8))=\End(R_1(M_8))$ on the
third symmetric power of its standard representation $R_1(M_8)$.  We are trying to show that $\wt{\Psi}$ induces an
injection
\begin{displaymath}
\Psi:\End(R_1(M_8)) \otimes \fieldk \cubic \to I_3(M_8).
\end{displaymath}
(We know that $\Psi$ maps $\End(R_1(M_8)) \otimes \fieldk \cubic$ into $I_3(M_8)$ since we know that the partial
derivatives of $\cubic$ belong to $I_2(M_8)$.)  Indeed, the kernel of $\Psi$ is the space of linear syzygies between the
partial derivatives of $\cubic$.  Now, the kernel of $\Psi$ is equal to $\mf{g} \otimes \fieldk \cubic$, where $\mf{g}$
is the annihilator in $\mf{gl}(R_1(M_8))$ of $\cubic$.  Thus Theorem~\ref{prop:nosyz} is equivalent to the following:

\begin{proposition}
\label{prop:l-zero}
We have $\mf{g}=0$.
\end{proposition}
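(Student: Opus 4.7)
The plan is to execute the three-step strategy announced in the paragraph preceding the proposition.

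\emph{Step (a), preparation.} The subspace $\mathfrak{g} \subseteq \mathfrak{gl}(R_1(M_8))$ is automatically closed under the Lie bracket, since $\mathfrak{gl}(R_1(M_8))$ acts on $\Sym^3 R_1(M_8)$ by derivations and hence the annihilator of any single element is Lie-closed. It is $\SS_8$-stable because $\cubic$ is skew-invariant, so the line $\fieldk \cubic$ and its annihilator are both preserved by conjugation. And it lies in $\mathfrak{sl}(R_1(M_8))$, since the identity operator acts on $\cubic$ as multiplication by $\deg \cubic = 3 \neq 0$ in characteristic $0$.

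\emph{Step (b), classification.} Invoke the classification of $\SS_8$-stable Lie subalgebras of $\mathfrak{sl}(R_1(M_8))$ established in \S \ref{s-liesub}. The representation $R_1(M_8) = V_{4,4}$ is an irreducible $\SS_8$-representation, self-dual with Frobenius--Schur indicator $+1$, and therefore carries an $\SS_8$-invariant non-degenerate symmetric bilinear form. The classification then produces only three possibilities: $\mathfrak{g} = 0$, $\mathfrak{g} = \mathfrak{so}(14)$, or $\mathfrak{g} = \mathfrak{sl}(14)$.

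\emph{Step (c), elimination.} Ruling out $\mathfrak{g} = \mathfrak{sl}(R_1(M_8))$ is immediate: for $i \neq j$, applying the elementary operator $E_{ij}$ to $\cubic$ yields $x_i \cdot \partial \cubic / \partial x_j$, and asking this to vanish for every such pair forces every partial derivative of $\cubic$ to vanish, contradicting $\cubic \neq 0$. Ruling out $\mathfrak{g} = \mathfrak{so}(14)$ reduces to checking that $\Sym^3(V)^{\mathfrak{so}(V)} = 0$, where $V$ is the defining $14$-dimensional representation. In characteristic $0$ the Lie-algebra invariants coincide with the $\SO(V)$-invariants, and the classical first fundamental theorem gives $\Sym^{\bullet}(V)^{\SO(V)} = \fieldk[q]$, a polynomial ring on the defining quadratic form $q$; in particular every invariant has even degree, so the cubic piece is zero.

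The main obstacle is step (b): many a priori possible simple Lie subalgebras of $\mathfrak{sl}(14)$ (for example $\mathfrak{sp}(14)$, or $\mathfrak{g}_2$ via a $14$-dimensional embedding, or subalgebras arising from tensor-factor decompositions of $V_{4,4}$) must all be dismissed, and it is precisely the $\SS_8$-equivariance constraint that cuts the list down to $\{0, \mathfrak{so}(14), \mathfrak{sl}(14)\}$. Once (b) is in hand, steps (a) and (c) are essentially formal.
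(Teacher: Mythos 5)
Your proof follows the same overall route as the paper: reduce to the trichotomy of $\SS_8$-stable Lie subalgebras of $\mf{sl}(R_1(M_8))$, then show that neither $\mf{so}(14)$ nor $\mf{sl}(14)$ can annihilate the non-zero cubic $\cubic$. The one genuinely different ingredient is your elimination of $\mf{so}(14)$: the paper (Proposition~\ref{prop:so-ann}) solves the equations $x_i\,\partial s/\partial x_j = x_j\,\partial s/\partial x_i$ by hand and finds that any annihilated polynomial has every variable appearing to even powers, while you quote the first fundamental theorem for the orthogonal group in a single vector variable, $\Sym^{\bullet}(V)^{\SO(V)}=\fieldk[q]$, so that all invariants have even degree and $\Sym^3(V)^{\mf{so}(V)}=0$ (valid, since in characteristic $0$ annihilation by $\mf{so}(V)$ is equivalent to invariance under the connected group $\SO(V)$). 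This is a clean alternative, close in spirit to the conceptual remark the paper makes after Proposition~\ref{prop:so-ann} but carried out globally rather than one $\mf{so}(2)$ at a time. Your separate elimination of $\mf{sl}(14)$ via the operators $E_{ij}$ is fine, though redundant: once $\mf{so}$ is excluded, $\mf{sl}$ is excluded because it contains $\mf{so}$.

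Two points need shoring up. First, your justification that $\mf{g}\subset\mf{sl}(R_1(M_8))$ --- ``the identity acts by $3\neq 0$'' --- only shows that the identity does not lie in $\mf{g}$, not that every element of $\mf{g}$ is traceless. The paper's argument is: the trace is an $\SS_8$-equivariant map to the trivial representation, so if it were non-zero on $\mf{g}$ then $\mf{g}$ would contain a copy of the trivial representation; by multiplicity-freeness of $R_1(M_8)^{\otimes 2}$ (Proposition~\ref{prop:decomp}) the only such copy is the span of the identity, and the identity does not annihilate $\cubic$. Second, the trichotomy you invoke in step (b) is not ``established in \S \ref{s-liesub}'': that section contains only the general structure theory (in particular Proposition~\ref{prop:self-dual}), and the classification for the specific representation $R_1(M_8)$ is Proposition~\ref{prop:subalg} in \S \ref{s:syzygy3}, whose proof still requires verifying primitivity of $V_{4,4}$ (Corollary~\ref{cor:prim-test} together with Proposition~\ref{prop:decomp}), the disjointness of the constituents of $\Sym^2(R_1(M_8))$ and $\bw{R_1(M_8)}$, and --- the real work --- that $\mf{so}(R_1(M_8))$ has no proper non-zero $\SS_8$-subalgebra, which uses the list $\mf{g}_2^4$, $\mf{so}(8)^2$, $\mf{sl}(3)^7$, $\mf{sl}(6)$ and Lemma~\ref{lem:sl6}. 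Citing that classification is legitimate (the paper's own proof of the present proposition does exactly this, after first passing to the algebraic closure, a reduction you should also record since the classification is stated over an algebraically closed field), but it should be attributed to Proposition~\ref{prop:subalg} rather than treated as already contained in the interlude.
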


We know two important things about $\mf{g}$:  first, $\mf{g}$ is a Lie subalgebra of $\mf{gl}(R_1(M_8))$, as it is the
annihilator of some element in a representation of $\mf{gl}(R_1(M_8))$; and second, $\mf{g}$ is stable under $\SS_8$,
as the action map $\Psi$ is $\SS_8$-equivariant and $\fieldk \cubic$ is stable under $\SS_8$.  We will prove
Proposition~\ref{prop:l-zero} by first classifying the $\SS_8$-stable Lie subalgebras of $\mf{gl}(R_1(M_8))$ and then
proving that $\mf{g}$ cannot be any of them except zero.

Before continuing, we note the following result:

\begin{proposition}
The Lie algebra $\mf{g}$ is contained in $\mf{sl}(R_1(M_8))$.
\end{proposition}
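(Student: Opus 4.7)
The plan is to exploit the $\SS_8$-equivariance of everything in sight together with Euler's formula. The key structural observation is that, as an $\SS_8$-representation,
\[
\mf{gl}(R_1(M_8)) = \mf{sl}(R_1(M_8)) \oplus \fieldk \cdot \id,
\]
because the scalar operators form the center of $\mf{gl}$, which is $\SS_8$-stable under the conjugation action, and the trace gives an $\SS_8$-equivariant splitting in characteristic $0$. Consequently, any $\SS_8$-subrepresentation of $\mf{gl}(R_1(M_8))$ either lies entirely in $\mf{sl}(R_1(M_8))$ or contains the trivial line $\fieldk \cdot \id$.

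First I would verify that $\mf{g}$ is indeed an $\SS_8$-subrepresentation: by construction, the map $\wt\Psi$ is $\SS_8$-equivariant, and the line $\fieldk\cubic$ is $\SS_8$-stable (it carries the sign character, by Proposition~\ref{p:repfacts}(a)). Hence the annihilator $\mf{g}$ of $\fieldk\cubic$ under $\wt\Psi$ is an $\SS_8$-submodule of $\mf{gl}(R_1(M_8))$.

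Next I would rule out the second alternative above by showing $\id \notin \mf{g}$. Unwinding the definition of $\wt\Psi$ for $\id = \sum_i x_i \otimes x_i^* \in R_1(M_8) \otimes R_1(M_8)^*$, we find that
\[
\wt\Psi(\id \otimes \cubic) = \sum_i x_i \cdot \frac{\partial \cubic}{\partial x_i} = 3\cubic
\]
by Euler's formula for the homogeneous cubic $\cubic$. Since $\cubic$ is non-zero and $\chr \fieldk = 0$, this is non-zero, so $\id$ does not annihilate $\cubic$.

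Combining these two observations: $\mf{g}$ is an $\SS_8$-subrepresentation of $\mf{gl}(R_1(M_8))$ that does not contain the trivial summand $\fieldk \cdot \id$, so it must be contained in the complementary $\SS_8$-summand $\mf{sl}(R_1(M_8))$. There is no real obstacle here --- the argument is essentially a one-line application of Euler's formula dressed up in $\SS_8$-equivariance --- so this proposition is really just a setup lemma clearing the decks for the genuine difficulty, which is classifying the $\SS_8$-stable Lie subalgebras of $\mf{sl}(R_1(M_8))$ in step (b) of the outlined strategy.
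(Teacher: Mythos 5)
Your overall route is the paper's: use $\SS_8$-equivariance of the trace to reduce to showing $\id \notin \mf{g}$, and then Euler's formula (the identity acts on $\Sym^3(R_1(M_8))$ as multiplication by $3$, so $\wt{\Psi}(\id \otimes \cubic) = 3\cubic \neq 0$) finishes the argument. However, the inference ``any $\SS_8$-subrepresentation of $\mf{gl}(R_1(M_8))$ either lies entirely in $\mf{sl}(R_1(M_8))$ or contains the trivial line $\fieldk \cdot \id$'' does not follow from the splitting $\mf{gl}(R_1(M_8)) = \mf{sl}(R_1(M_8)) \oplus \fieldk\,\id$ alone, and as a general statement it is false: if $\mf{sl}(V)$ contained a nonzero $G$-invariant traceless operator $A$, then the line spanned by $A + \id$ would be a $G$-stable subspace of $\mf{gl}(V)$ contained in neither alternative. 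Your dichotomy silently uses the fact that the trivial representation occurs in $\mf{gl}(R_1(M_8))$ with multiplicity one, i.e.\ that the only $\SS_8$-invariant operators on $R_1(M_8)$ are the scalars, and nothing in your proposal justifies this.

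The missing ingredient is easy to supply, and it is precisely the step the paper's proof makes explicit: since $R_1(M_8) \cong V_{4,4}$ is irreducible, Schur's lemma gives $\End_{\SS_8}(R_1(M_8)) = \fieldk\,\id$ (the paper instead quotes the multiplicity-freeness of $R_1(M_8)^{\otimes 2}$ from Proposition~\ref{prop:decomp}, which in particular says the trivial representation appears there exactly once, namely as the span of the identity). With that line added, the argument closes correctly: if $\mf{g} \not\subset \mf{sl}(R_1(M_8))$, then the trace is nonzero on $\mf{g}$, so by semisimplicity $\mf{g}$ contains an $\SS_8$-invariant operator of nonzero trace, which by Schur must be a nonzero multiple of $\id$; your Euler-formula computation then gives the contradiction. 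So the proposal is essentially the paper's proof, but with the multiplicity-one (Schur) step — the one piece of representation-theoretic input beyond equivariance — left unstated.
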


\begin{proof}
The trace map $\mf{gl}(R_1(M_8)) \to \fieldk$ is $\SS_8$-equivariant, where $\SS_8$ acts trivially on the target
$\fieldk$.  Thus if $\mf{g}$ contained an element of non-zero trace it would have to contain a copy of the trivial
representation.  By Proposition~\ref{prop:decomp}, $\mf{gl}(R_1(M_8)) \cong R_1(M_8)^{\otimes 2}$ is multiplicity free
as an $\SS_8$-representation.  Thus the one-dimensional space spanned by the identity matrix is the only copy of the
trivial representation in $\mf{gl}(R_1(M_8))$.  Therefore, if $\mf{g}$ were not contained in $\mf{sl}(R_1(M_8))$ then
it would contain the center of $\mf{gl}(R_1(M_8))$.  However, we know that the identity matrix does not annihilate
$\cubic$.  Thus $\mf{g}$ must be contained in $\mf{sl}(R_1(M_8))$.
\end{proof}

%
%

\section{Interlude:  $G$-stable Lie subalgebras of $\mf{sl}(V)$}
\label{s-liesub}

In this section $G$ will denote an arbitrary finite group and $V$ an irreducible representation of $G$ over an
algebraically closed field $\fieldk$ of characteristic zero.  We investigate the following general problem:

\begin{problem}
Determine the $G$-stable Lie subalgebras of $\mf{sl}(V)$.
\end{problem}

We do not obtain a complete answer to this question, but we prove strong enough results to determine the answer in our
specific situation. We will use the term \emph{$G$-subalgebra} to mean a $G$-stable Lie subalgebra.

\subsection{Some structure theory}

Our first result is the following:

\begin{proposition}
\label{prop:solv}
Let $V$ be an irreducible representation of $G$.  Then every solvable $G$-subalgebra of $\mf{sl}(V)$ is abelian and
consists of semi-simple elements.
\end{proposition}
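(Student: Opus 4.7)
The plan is to reduce both conclusions (abelian, semi-simple) to the same schema: exhibit a certain subspace of $V$ killed by part of $\mf{h}$, observe that it is $G$-stable, and invoke irreducibility of $V$ to conclude it is all of $V$.

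First I would prove that $\mf{h}$ is abelian. Let $\mf{h} \subseteq \mf{sl}(V)$ be any solvable $G$-subalgebra. By Lie's theorem (valid over $\fieldk$, which is assumed algebraically closed of characteristic $0$), applied to the representation $V$ of $\mf{h}$, the derived subalgebra $[\mf{h},\mf{h}]$ acts on $V$ by nilpotent operators. Being the bracket of a $G$-stable subspace with itself, $[\mf{h},\mf{h}]$ is $G$-stable. Engel's theorem, applied to $[\mf{h},\mf{h}]$ as a Lie algebra of nilpotent endomorphisms of $V$, produces a nonzero common kernel
\begin{displaymath}
W = \{\, v \in V : x\cdot v = 0 \text{ for all } x \in [\mf{h},\mf{h}]\,\}.
\end{displaymath}
Since $[\mf{h},\mf{h}]$ is $G$-stable, so is $W$. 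By irreducibility of $V$ we conclude $W = V$, hence $[\mf{h},\mf{h}] = 0$ and $\mf{h}$ is abelian.

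Next I would show $\mf{h}$ consists of semi-simple elements. Let $\mf{n} \subseteq \mf{h}$ denote the subset of nilpotent elements. Because $\mf{h}$ is now abelian, any two elements of $\mf{n}$ commute, so their sum is again nilpotent and $\mf{n}$ is a linear subspace (in fact an ideal) of $\mf{h}$. The $G$-action on $\mf{gl}(V)$ is by Lie algebra automorphisms and in particular preserves the property of being a nilpotent operator, so $\mf{n}$ is $G$-stable. Applying Engel's theorem once more, now to the Lie algebra $\mf{n}$ of commuting nilpotents, we get a nonzero common kernel $W' \subseteq V$, which is again $G$-stable. Irreducibility forces $W' = V$, so every element of $\mf{n}$ acts as $0$ on $V$, i.e.\ $\mf{n} = 0$.

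The two steps are essentially the same trick applied twice, and there is no serious obstacle: the only thing to check carefully is that $G$-stability is inherited by $[\mf{h},\mf{h}]$ and by the nilpotent subset $\mf{n}$, and that the two common kernels are nonzero — both of which are immediate from Engel. The structural point worth emphasizing is that irreducibility of $V$ as a $G$-representation (as opposed to a $\mf{h}$-representation) is exactly what powers the argument: a solvable $\mf{h}$-stable subspace is typically very far from all of $V$, but after we enlarge the symmetry group by $G$ it has nowhere to hide.
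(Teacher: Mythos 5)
Your proof of abelianity is correct and is essentially the paper's argument in dual form: where the paper observes that $[\mf{h},\mf{h}]V$ is a proper $G$-stable subspace (using the Lie flag to see that $[\mf{h},\mf{h}]$ shifts the flag down), you observe that the joint kernel of $[\mf{h},\mf{h}]$ is a nonzero $G$-stable subspace (using Engel). Either route works.

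The proof of semi-simplicity has a genuine gap. You define $\mf{n}$ to be the set of \emph{nilpotent elements of $\mf{h}$} and correctly deduce $\mf{n}=0$, but ``$\mf{h}$ consists of semi-simple elements'' is strictly stronger: an element $x\in\mf{h}$ with Jordan decomposition $x=x_s+x_n$ in $\End(V)$, both parts nonzero, is neither nilpotent nor semi-simple, so $x\notin\mf{n}$, and $x_n$ need not lie in $\mf{h}$ at all. Thus $\mf{n}=0$ does not by itself rule out such $x$. (Forgetting $G$ for a moment, $\mf{h}=\fieldk\cdot(s+n)$ with $s$ semi-simple, $n$ nilpotent, $[s,n]=0$, both nonzero, is a one-dimensional abelian Lie algebra with $\mf{n}=0$ and no nonzero semi-simple element.) The paper circumvents this by passing to the commutative \emph{associative} algebra $R\subset\End(V)$ generated by $\mf{h}$: the Jordan parts of any $x\in\mf{h}$ are polynomials in $x$ and hence lie in $R$, so $x_n$ lies in the nilradical $R_n$; since $R_n$ is $G$-stable and acts nilpotently, $R_nV$ is a proper $G$-stable subspace, hence zero, so $R_n=0$ and every $x\in\mf{h}$ has $x_n=0$. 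You could instead repair your argument by applying the Engel trick to the \emph{image} of the map $x\mapsto x_n$ (which is linear because $\mf{h}$ is abelian and $G$-equivariant because conjugation respects Jordan decomposition), a $G$-stable Lie algebra of nilpotent operators; but as written, the step from $\mf{n}=0$ to semi-simplicity is missing.
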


\begin{proof}
Let $\mf{g}$ be a solvable subalgebra of $\mf{sl}(V)$.  By Lie's theorem, $\mf{g}$ preserves a complete flag $0=V_0
\subset \cdots \subset V_n=V$.  The action of $\mf{g}$ on each one-dimensional space $V_i/V_{i-1}$ must factor through
$\mf{g}/[\mf{g}, \mf{g}]$; thus $[\mf{g}, \mf{g}]$ acts by zero on $V_i/V_{i-1}$ and so carries $V_i$ into $V_{i-1}$.
The space $[\mf{g}, \mf{g}] V$ is therefore not all of $V$.  On the other hand, $[\mf{g}, \mf{g}]$ is $G$-stable and
therefore so is $[\mf{g}, \mf{g}] V$.  From the irreducibility of $V$ we conclude $[\mf{g}, \mf{g}] V=0$, from which it
follows that $[\mf{g}, \mf{g}]=0$.  Thus $\mf{g}$ is abelian.

Now let $R$ be the subalgebra of $\End(V)$ generated (under the usual multiplication) by $\mf{g}$.  Let $R_s$
(resp.\ $R_n$) denote the set of semi-simple (resp.\ nilpotent) elements of $R$.  Then $R_s$ is a subring of $R$, $R_n$
is an ideal of $R$ and $R=R_s \oplus R_n$.  As $R_n^m=0$ for some $m$, the space $R_n V$ is not all of $V$.  As it is
$G$-stable it must be zero, and so $R_n=0$.  We thus find that $R=R_s$ and so all elements of $R$, and thus all elements
of $\mf{g}$, are semi-simple.
\end{proof}

Let $V$ be a representation of $G$.  We say that $V$ is \emph{imprimitive} if there is a decomposition
$V=\bigoplus_{i \in I} V_i$ of $V$ into non-zero subspaces, at least two in number, such that each element of $G$
carries each $V_i$ into some $V_j$.  We say that $V$ is \emph{primitive} if it is not imprimitive.  Note that primitive
implies irreducible.  An irreducible representation is imprimitive if and only if it is induced from a proper subgroup.

\begin{proposition}
\label{prop:prim}
Let $V$ be an irreducible representation of $G$.  Then $V$ is primitive if and only if the only abelian $G$-subalgebra
of $\mf{sl}(V)$ is zero.
\end{proposition}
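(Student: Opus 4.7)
The plan is to prove both directions by exploiting the joint eigenspace decomposition of an abelian Lie subalgebra of semi-simple endomorphisms (which is what Proposition~\ref{prop:solv} provides).

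For the easy direction, suppose $V$ is imprimitive, with decomposition $V=\bigoplus_{i\in I} V_i$ ($|I|\geq 2$) permuted by $G$. I would consider the abelian subalgebra $\mf{h}\subset\mf{gl}(V)$ of endomorphisms acting by a scalar on each block, $\mf{h}=\{\sum c_i\,\id_{V_i} : c_i\in\fieldk\}$. It is clearly $G$-stable (the action just permutes the coefficients $c_i$) and abelian. Since $V$ is irreducible, $G$ acts transitively on $I$, so all $V_i$ have the same dimension, and the traceless subspace $\mf{h}\cap\mf{sl}(V)$ has codimension one in $\mf{h}$; in particular it is a non-zero abelian $G$-subalgebra of $\mf{sl}(V)$.

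For the converse, suppose $\mf{h}\subset\mf{sl}(V)$ is a non-zero abelian $G$-subalgebra. By Proposition~\ref{prop:solv}, every element of $\mf{h}$ is semi-simple, and since $\mf{h}$ is abelian the elements are simultaneously diagonalizable. This gives a decomposition $V=\bigoplus_{\chi}V_\chi$ indexed by the characters $\chi\colon\mf{h}\to\fieldk$ that appear. Pick any non-zero $h\in\mf{h}$; it is traceless and semi-simple, hence has at least two distinct eigenvalues, so at least two distinct characters appear and the decomposition has $\geq 2$ pieces.

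Finally, I would check that $G$ permutes the eigenspaces. The $G$-action on $\mf{h}$ is $g\cdot h=ghg^{-1}$ (viewed in $\End(V)$); for $v\in V_\chi$ and $h\in\mf{h}$,
\[
h(gv) \;=\; g(g^{-1}hg)v \;=\; \chi(g^{-1}hg)\,gv,
\]
so $gv$ lies in the eigenspace for the character $h\mapsto\chi(g^{-1}hg)$, i.e.\ $g$ sends $V_\chi$ to $V_{g\cdot\chi}$. Together with the previous paragraph this shows $V$ is imprimitive, completing the proof. No step looks difficult; the only thing to be slightly careful about is verifying that $\mf{h}\cap\mf{sl}(V)$ is genuinely non-zero in the first direction (which needs irreducibility to force equal block dimensions) and that a non-zero traceless semi-simple operator has at least two eigenvalues in the second direction.
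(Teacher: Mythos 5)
Your proof is correct and follows essentially the same route as the paper: both directions use the projection operators $p_i=\id_{V_i}$ (your $\mf{h}$ is exactly the span of the $p_i$) and, for the converse, the eigenspace decomposition coming from Proposition~\ref{prop:solv}. One small remark: you do not need irreducibility or equal block dimensions to see that $\mf{h}\cap\mf{sl}(V)\neq 0$; the trace functional on $\mf{h}\cong\fieldk^{|I|}$ is non-zero because the $V_i$ are non-zero, so its kernel has codimension one, and since $\dim\mf{h}=|I|\geq 2$ the kernel is automatically non-zero — this is how the paper argues it.
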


\begin{proof}
Let $V$ be an irreducible representation of $G$ and let $\mf{g}$ be a non-zero abelian $G$-subalgebra of $\mf{sl}(V)$.
We will show that $V$ is imprimitive.  By Proposition~\ref{prop:solv} all elements of $\mf{g}$ are semi-simple.  We
thus get a decomposition $V=\bigoplus V_{\lambda}$ of $V$ into eigenspaces of $\mf{g}$ (each $\lambda$ is a linear map
$\mf{g} \to \fieldk$).  As $\mf{g}$ is $G$-stable, each element of $G$ must carry each $V_{\lambda}$ into some
$V_{\lambda'}$.  Note that if $V=V_{\lambda}$ for some $\lambda$ then $\mf{g}$ would consist of scalar matrices, which
is impossible as $\mf{g}$ is contained in $\mf{sl}(V)$.  Thus there must be at least two non-zero $V_{\lambda}$ and so
$V$ is imprimitive.

We now establish the other direction.  Thus let $V$ be an irreducible imprimitive representation of $G$.  We construct
a non-zero abelian $G$-subalgebra of $\mf{sl}(V)$.  Write $V=\bigoplus V_i$ where the elements of $G$ permute the $V_i$.
Let $p_i$ be the endomorphism of $V$ given by projecting onto $V_i$ and then including back into $V$ and let $\mf{g}$ be
the subspace of $\mf{gl}(V)$ spanned by the $p_i$.  Then $\mf{g}$ is an abelian subalgebra of $\mf{gl}(V)$ since
$p_i p_j=0$ for $i \ne j$.  Furthermore, $\mf{g}$ is $G$-stable since for each $i$ we have $g p_i g^{-1}=p_j$ for some
$j$.  Intersecting $\mf{g}$ with $\mf{sl}(V)$ gives a non-zero abelian $G$-subalgebra of $\mf{sl}(V)$ (the intersection
is non-zero because $\mf{g}$ has dimension at least two and $\mf{sl}(V)$ has codimension one).
\end{proof}

We have the following important consequence of Proposition~\ref{prop:prim}:

\begin{corollary}
\label{cor:prim-ss}
Let $V$ be a primitive representation of $G$.  Then every $G$-subalgebra of $\mf{sl}(V)$ is semi-simple.
\end{corollary}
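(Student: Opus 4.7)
The plan is to show that the radical of any $G$-subalgebra $\mf{g} \subset \mf{sl}(V)$ must vanish, from which semi-simplicity follows immediately. First I would recall that the radical $\mathrm{rad}(\mf{g})$, being the unique maximal solvable ideal of $\mf{g}$, is a characteristic ideal — it is preserved by every Lie algebra automorphism of $\mf{g}$. Since $G$ acts on $\mf{g}$ by Lie algebra automorphisms (the $G$-action on $\mf{sl}(V)$ by conjugation restricts to $\mf{g}$ because $\mf{g}$ is $G$-stable, and conjugation preserves the bracket), the radical $\mathrm{rad}(\mf{g})$ is itself a $G$-stable Lie subalgebra of $\mf{sl}(V)$.

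Next I would invoke Proposition~\ref{prop:solv}: every solvable $G$-subalgebra of $\mf{sl}(V)$ is abelian (and consists of semi-simple elements). In particular $\mathrm{rad}(\mf{g})$ is an abelian $G$-subalgebra of $\mf{sl}(V)$. Finally I would apply Proposition~\ref{prop:prim} in the direction that uses primitivity of $V$: since $V$ is primitive, the only abelian $G$-subalgebra of $\mf{sl}(V)$ is the zero subalgebra. Therefore $\mathrm{rad}(\mf{g}) = 0$, which is precisely the statement that $\mf{g}$ is semi-simple.

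There is no real obstacle here; the result is essentially a one-line combination of the two preceding propositions, once one observes the (slightly subtle but standard) point that the radical is characteristic and hence $G$-stable. If I wanted to avoid invoking ``characteristic ideal,'' I could instead argue directly: the derived series $\mf{g} \supset [\mf{g},\mf{g}] \supset [[\mf{g},\mf{g}],[\mf{g},\mf{g}]] \supset \cdots$ consists of $G$-stable subalgebras (each bracket of $G$-stable subspaces is $G$-stable), so if some term in the derived series of $\mathrm{rad}(\mf{g})$ were nonzero it would contradict Proposition~\ref{prop:prim} applied to the last nonzero term (which is abelian by construction). Either way, the argument is immediate from the structural results already in hand.
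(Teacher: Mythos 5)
Your proof is correct and follows the paper's own argument: the paper likewise observes that the radical of $\mf{g}$ is a solvable $G$-subalgebra (implicitly using that the radical is characteristic, hence $G$-stable), which vanishes by Propositions~\ref{prop:solv} and~\ref{prop:prim}. Your spelled-out justification of $G$-stability, and the alternative via the derived series, are just more explicit versions of the same one-line argument.
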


\begin{proof}
Let $\mf{g}$ be a $G$-subalgebra of $\mf{sl}(V)$.  The radical of $\mf{g}$ is then a solvable $G$-subalgebra and
therefore vanishes.  Thus $\mf{g}$ is semi-simple.
\end{proof}

Proposition~\ref{prop:prim} can also be used to give a criterion for primitivity.

\begin{corollary}
\label{cor:prim-test}
Let $V$ be an irreducible representation of $G$ such that each non-zero $G$-submodule of $\mf{sl}(V)$ has dimension at
least that of $V$.  Then $V$ is primitive.
\end{corollary}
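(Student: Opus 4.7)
The plan is to prove the contrapositive: assuming $V$ is imprimitive, I will exhibit a non-zero $G$-submodule of $\mf{sl}(V)$ whose dimension is strictly less than $\dim V$, which contradicts the hypothesis. The construction is already essentially carried out in the proof of Proposition~\ref{prop:prim}; the only new work is a dimension count.

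First, I would write down an imprimitivity decomposition $V = \bigoplus_{i \in I} V_i$ with $|I| \geq 2$, and let $p_i \in \End(V)$ denote the projection onto $V_i$ along the other summands. As observed in the proof of Proposition~\ref{prop:prim}, for each $g \in G$ and each $i$ there is some $j$ (depending on $g$ and $i$) with $g p_i g^{-1} = p_j$, so the subspace $\mf{h} \subset \mf{gl}(V)$ spanned by $\{p_i : i \in I\}$ is $G$-stable, with $\dim \mf{h} = |I|$. Since $\sum_i p_i = \mathrm{id}_V$, the one-dimensional subspace $\fieldk \cdot \mathrm{id}_V$ is the unique trivial $G$-summand hit by the trace, so $\mf{h}' := \mf{h} \cap \mf{sl}(V)$ is $G$-stable of dimension $|I| - 1$.

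Next I would do the dimension count. Since each $V_i$ is non-zero, $\dim V = \sum_{i \in I} \dim V_i \geq |I|$, hence $\dim \mf{h}' = |I| - 1 \leq \dim V - 1 < \dim V$. Since $|I| \geq 2$, we also have $\dim \mf{h}' \geq 1$, so $\mf{h}'$ is non-zero. Thus $\mf{h}'$ is a non-zero $G$-submodule of $\mf{sl}(V)$ of dimension strictly less than $\dim V$, contradicting the hypothesis. This completes the proof.

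I do not expect any serious obstacle: the construction is already on hand from the previous proposition, and the only content beyond it is the inequality $|I| \leq \dim V$, which is immediate. The one place to be careful is ensuring $\mf{h}'$ is actually non-zero, which just uses that imprimitivity requires $|I| \geq 2$; an alternative, in case one wants a stronger conclusion, is to note that any non-zero $G$-submodule of $\mf{h}'$ still has dimension at most $|I| - 1 < \dim V$, so the contradiction with the hypothesis holds even when phrased in terms of irreducible submodules.
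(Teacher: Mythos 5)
Your proof is correct, and it takes a genuinely different route from the paper's. The paper proves the corollary by invoking Proposition~\ref{prop:prim} (primitivity is equivalent to the vanishing of all abelian $G$-subalgebras of $\mf{sl}(V)$) and then bounding the dimension of an \emph{arbitrary} abelian $G$-subalgebra $\mf{g}$: by Proposition~\ref{prop:solv} the elements of $\mf{g}$ are commuting semisimple operators, so $\mf{g}$ lies in a Cartan subalgebra of $\mf{sl}(V)$ and hence $\dim \mf{g} \le \dim V - 1$, forcing $\mf{g}=0$ under the hypothesis. You instead argue the contrapositive directly: from an imprimitivity decomposition $V=\bigoplus_{i\in I}V_i$ you build the explicit subalgebra $\mf{h}'=\mathrm{span}(p_i)\cap\mf{sl}(V)$ (precisely the one appearing in the proof of Proposition~\ref{prop:prim}) and compute $\dim \mf{h}'=|I|-1\le \dim V-1$ by hand, with $|I|\ge 2$ guaranteeing $\mf{h}'\ne 0$. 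Your argument is more self-contained, avoiding both Proposition~\ref{prop:solv} and the Cartan-subalgebra bound, at the cost of proving the dimension estimate only for the one subalgebra you need; the paper's argument is shorter on the page because it leans on the earlier propositions, and it yields the slightly stronger observation that \emph{every} abelian $G$-subalgebra of $\mf{sl}(V)$ has dimension strictly less than $\dim V$. Both establish the corollary.
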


\begin{proof}
Let $\mf{g}$ be an abelian $G$-subalgebra of $\mf{sl}(V)$.  We will show that $\mf{g}$ is zero.  By
Proposition~\ref{prop:solv} $\mf{g}$ consists of semi-simple elements and is therefore contained in some Cartan
subalgebra of $\mf{sl}(V)$.  This shows that $\dim{\mf{g}} < \dim{V}$.  Thus, by our hypothesis, $\mf{g}=0$.
\end{proof}

Let $V$ be a primitive $G$-module and let $\mf{g}$ be a $G$-subalgebra.  As $\mf{g}$ is semi-simple it decomposes as
$\mf{g}=\bigoplus \mf{g}_i$ where each $\mf{g}_i$ is a simple Lie algebra.  The $\mf{g}_i$ are called the \emph{simple
factors} of $\mf{g}$ and are unique.  As the simple factors are unique, $G$ must permute them.  We call $\mf{g}$
\emph{prime} if the action of $G$ on its simple factors is transitive.  Note that in this case the $\mf{g}_i$'s are
isomorphic and so $\mf{g}$ is ``isotypic.''  Clearly, every $G$-subalgebra of $\mf{sl}(V)$ breaks up into a sum of prime
subalgebras and so it suffices to understand these.

\subsection{The action of a $G$-subalgebra on $V$}

We now consider how a $G$-stable subalgebra acts on $V$:

\begin{proposition}
\label{prop:iso}
Let $V$ be a primitive $G$-module, let $\mf{g}$ be a $G$-subalgebra of $\mf{sl}(V)$ and let $\mf{g}=\bigoplus_{i \in I}
\mf{g}_i$ be the decomposition of $\mf{g}$ into simple factors.
\begin{enumerate}
\item The representation of $\mf{g}$ on $V$ is isotypic, that is, it is of the form $V_0^{\oplus m}$ for some
irreducible $\mf{g}$-module $V_0$.
\item We have a decomposition $V_0=\bigotimes_{i \in I} W_i$ where each $W_i$ is a faithful irreducible representation
of $\mf{g}_i$.
\item We have $V_0 \cong V_0^g$ for each element $g$ of $G$.  (Here $V_0^g$ denotes the $\mf{g}$-module obtained by
twisting $V_0$ by the automorphism $g$ induces on $\mf{g}$.)
\item If $\mf{g}$ is a prime subalgebra then for any $i$ and $j$ one can choose an isomorphism $f:\mf{g}_i \to \mf{g}_j$
so that $W_i$ and $f^* W_j$ become isomorphic as $\mf{g}_i$-modules.
\end{enumerate}
\end{proposition}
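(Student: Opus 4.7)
The plan is to handle the four parts in sequence, each building on the previous. Throughout, for $g \in G$ let $\tilde{g}\colon \mf{g} \to \mf{g}$ denote the automorphism of $\mf{g}$ that $g$ induces (coming from conjugation by $g$ inside $\mf{gl}(V)$), and for any $\mf{g}$-module $U$ let $U^g$ denote the twist, i.e., $U$ with the new action $x \cdot_g u = \tilde{g}(x) \cdot u$. Since $g \in G$ acts on $V$ by a linear automorphism intertwining the $\mf{g}$-action up to $\tilde{g}$, this gives a $\mf{g}$-module isomorphism $V \cong V^g$.

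For (1), I would decompose $V = \bigoplus_\alpha V^{(\alpha)}$ into isotypic components for $\mf{g}$, indexed by isomorphism classes of irreducible $\mf{g}$-modules. Since the $\mf{g}$-action and the $G$-action are linked by $\tilde{g}$, each $g \in G$ carries each isotypic component $V^{(\alpha)}$ to another such component $V^{(\alpha^g)}$. Primitivity of $V$ forces there to be only one isotypic component, so $V \cong V_0^{\oplus m}$ for some irreducible $\mf{g}$-module $V_0$.

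For (2), I would invoke the standard fact that every irreducible representation of a semisimple Lie algebra $\mf{g} = \bigoplus_i \mf{g}_i$ factors as a tensor product $V_0 = \bigotimes_i W_i$ with $W_i$ irreducible over $\mf{g}_i$ (proved by repeatedly applying Schur's lemma to the commutant). For faithfulness of each $W_i$: the kernel of $\mf{g}_i \to \mf{gl}(W_i)$ is an ideal of the simple algebra $\mf{g}_i$, hence is $0$ or $\mf{g}_i$; if it were $\mf{g}_i$, then $\mf{g}_i$ would act trivially on $V_0$ and hence on $V$, contradicting $\mf{g}_i \subset \mf{sl}(V) \setminus \{0\}$.

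For (3), starting from the $\mf{g}$-module isomorphism $V \cong V^g$ and the decomposition $V \cong V_0^{\oplus m}$, I would apply twisting termwise to get $V^g \cong (V_0^g)^{\oplus m}$. Combining, $V_0^{\oplus m} \cong (V_0^g)^{\oplus m}$; the Krull--Schmidt theorem (uniqueness of isotypic decomposition) then yields $V_0 \cong V_0^g$. For (4), given $i, j$ in the prime case, transitivity supplies $g \in G$ with $\tilde{g}(\mf{g}_i) = \mf{g}_j$; set $f = \tilde{g}|_{\mf{g}_i}\colon \mf{g}_i \to \mf{g}_j$. Writing $V_0^g = \bigotimes_k W_k'$ in its own tensor decomposition with respect to the simple factors $\mf{g}_k$, one sees directly from the definitions that $W_i' \cong f^* W_j$ (because the action of $\mf{g}_i$ on $V_0^g$ is by design the action of $\tilde{g}(\mf{g}_i) = \mf{g}_j$ on $V_0$, which factors through $W_j$). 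By part (3), $\bigotimes_k W_k' \cong \bigotimes_k W_k$, and the uniqueness of the tensor-product decomposition of an irreducible module for a semisimple Lie algebra (up to reordering of isomorphic factors, which here is pinned down by the labeling of the simple factors) gives $W_i' \cong W_i$, so $W_i \cong f^* W_j$.

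The main obstacle I expect is bookkeeping around the twisting conventions in parts (3) and (4), particularly the rigorous invocation of uniqueness of tensor factorizations: this requires that the factor-algebras are distinguished (so matching cannot mix up two $W_k$'s labeled by distinct simple ideals), which is exactly what lets the permutation action of $G$ on simple factors translate into the isomorphism $W_i \cong f^* W_j$. Everything else is structural and routine.
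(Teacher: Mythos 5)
Your proposal is correct and follows essentially the same route as the paper's proof: isotypic decomposition permuted by $G$ plus primitivity for (1), tensor factorization of the irreducible $V_0$ with faithfulness via simplicity of the $\mf{g}_i$ for (2), the $\mf{g}$-module isomorphism $V \cong V^g$ plus uniqueness of the isotypic decomposition for (3), and transitivity of $G$ on simple factors plus uniqueness of the tensor factors for (4). Your write-up merely makes explicit some bookkeeping (Krull--Schmidt, uniqueness of the tensor decomposition) that the paper leaves implicit.
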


\begin{proof}
(1) Since $\mf{g}$ is semi-simple we get a decomposition $V=\bigoplus V_i^{\oplus m_i}$ of $V$ as a $\mf{g}$-module,
where the $V_i$ are pairwise non-isomorphic simple $\mf{g}$-modules.  Each element $g$ of $G$ must take each isotypic
piece $V_i^{\oplus m_i}$ to some other isotypic piece $V_j^{\oplus m_j}$ since the map $g:V \to V^g$ is
$\mf{g}$-equivariant.  As $V$ is primitive for $G$, we conclude that it must be isotypic for $\mf{g}$, and so we may
write $V=V_0^{\oplus m}$ for some irreducible $\mf{g}$-module $V_0$.

(2) As $V_0$ is irreducible, it necessarily decomposes as a tensor product $V_0=\bigotimes_{i \in I} W_i$ where each
$W_i$ is an irreducible $\mf{g}_i$-module.  Since the representation of $\mf{g}$ on $V=V_0^{\oplus m}$ is faithful so
too must be the representation of $\mf{g}$ on $V_0$.  From this, we conclude that each $W_i$ must be a faithful
representation of $\mf{g}_i$.

(3) For any $g \in G$ the map $g:V \to V^g$ is an isomorphism of $\mf{g}$-modules and so $V_0^{\oplus m}$ is isomorphic
to $(V_0^{\oplus m})^g =(V_0^g)^{\oplus m}$, from which it follows that $V_0$ is isomorphic to $V_0^g$.

(4) Since $G$ acts transitively on the simple factors, given $i$ and $j$ we can pick $g \in G$ such that $g \mf{g}_i=
\mf{g}_j$.  The isomorphism of $V_0$ with $V_0^g$ then gives the isomorphism of $W_i$ and $W_j$ as $\mf{g}_i$-modules.
\end{proof}

This proposition gives a strong numerical constraint on prime subalgebras:

\begin{corollary}
\label{cor:prime-constraint}
Let $V$ be a primitive representation of $G$ and let $\mf{g}=\mf{g}_0^n$ be a prime subalgebra of $\mf{sl}(V)$, where
$\mf{g}_0$ is a simple Lie algebra.  Then $\dim{V}$ is divisible by $d^n$ where $d$ is the dimension of some faithful
representation of $\mf{g}_0$.  In particular, $\dim{V} \ge d_0^n$ where $d_0$ is the minimal dimension of a faithful
representation of $\mf{g}_0$.
\end{corollary}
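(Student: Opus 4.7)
The plan is to deduce this essentially as a numerical corollary of Proposition~\ref{prop:iso}, so no new machinery is needed; the whole argument will be an unpacking of what that proposition already tells us about how $\mf{g}$ acts on $V$.

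First I would apply Proposition~\ref{prop:iso}(1) to write $V \cong V_0^{\oplus m}$ for some irreducible $\mf{g}$-module $V_0$ and some positive integer $m$. Next, since $\mf{g} = \mf{g}_0^n$, Proposition~\ref{prop:iso}(2) gives a tensor product decomposition $V_0 \cong \bigotimes_{i=1}^{n} W_i$, where each $W_i$ is a faithful irreducible representation of the $i$th simple factor $\mf{g}_i \cong \mf{g}_0$. Now I would invoke the primeness hypothesis: by Proposition~\ref{prop:iso}(4), for any $i,j$ we can choose an isomorphism $f : \mf{g}_i \to \mf{g}_j$ under which $W_i$ and $f^*W_j$ are isomorphic. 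In particular $\dim W_i = \dim W_j$ for all $i,j$; call this common dimension $d$.

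Combining these, $\dim V_0 = \prod_{i=1}^n \dim W_i = d^n$, and $\dim V = m \cdot d^n$, so $d^n$ divides $\dim V$. Finally, $d$ is the dimension of a faithful irreducible representation of $\mf{g}_0$ (any one of the $W_i$, transported across the chosen isomorphism $\mf{g}_i \cong \mf{g}_0$), so by the very definition of $d_0$ we have $d \geq d_0$, and hence $\dim V \geq d_0^n$.

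There is no substantive obstacle here; the corollary is a direct bookkeeping consequence of parts (1), (2), and (4) of Proposition~\ref{prop:iso}. The only thing to be slightly careful about is that primeness is used exactly once, in step three, to force all the tensor factors $W_i$ to have the same dimension; without it one would only get $\dim V_0 = \prod \dim W_i$ with possibly distinct dimensions, and the conclusion would have to be weakened accordingly.
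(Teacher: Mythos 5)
Your proof is correct and is exactly the argument the paper intends; the corollary is stated immediately after Proposition~\ref{prop:iso} without a written proof precisely because it follows by the bookkeeping you carry out, using parts (1), (2), and (4) in the way you describe.
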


\subsection{Self-dual representations}

Let $V$ be an irreducible self-dual $G$-module.  Thus we have a non-degenerate $G$-invariant form $\langle \cdot ,
\cdot  \rangle:V \otimes V \to \fieldk$.  Such a form is unique up to scaling, and either symmetric or anti-symmetric.
We accordingly call $V$ \emph{orthogonal} or \emph{symplectic}.

Let $A$ be an endomorphism of $V$.  We define the \emph{transpose} of $A$, denoted $A^t$, by the formula
\begin{displaymath}
\langle A^t v, u \rangle=\langle v, Au \rangle.
\end{displaymath}
It is easily verified that $(AB)^t=B^t A^t$ and $({}^g A)^t={}^g (A^t)$.  We call an endomorphism $A$ \emph{symmetric}
if $A=A^t$ and \emph{anti-symmetric} if $A=-A^t$.  One easily verifies that the commutator of two anti-symmetric
endomorphisms is again anti-symmetric.  Thus the set of all anti-symmetric endomorphisms forms a $G$-subalgebra of
$\mf{sl}(V)$ which we denote by $\mf{sl}(V)^-$.  In the orthogonal case $\mf{sl}(V)^-$ is isomorphic to $\mf{so}(V)$ as
a Lie algebra and $\bw{V}$ as a $G$-module, while in the symplectic case it is isomorphic to $\mf{sp}(V)$ as a Lie
algebra and $\Sym^2(V)$ as a $G$-module.  We let $\mf{sl}(V)^+$ denote the space of symmetric endomorphisms.

\begin{proposition}
\label{prop:self-dual}
Let $V$ be an irreducible self-dual $G$-module.  Assume that:
\begin{itemize}
\item $\Sym^2(V)$ and $\bw{V}$ have no isomorphic $G$-submodules; and
\item $\mf{sl}(V)^-$ has no proper non-zero $G$-subalgebras.
\end{itemize}
Then any proper non-zero $G$-subalgebra of $\mf{sl}(V)$ other than $\mf{sl}(V)^-$ is commutative.  In particular, if
$V$ is primitive then the $G$-subalgebras of $\mf{sl}(V)$ are exactly 0, $\mf{sl}(V)^-$ and $\mf{sl}(V)$.
\end{proposition}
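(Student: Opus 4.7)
The plan is to decompose $\mf g$ via the involution $A \mapsto -A^t$ of $\mf{sl}(V)$, whose fixed subspace is $\mf{sl}(V)^-$ and whose $(-1)$-eigenspace is $\mf{sl}(V)^+$. First I will show $\mf g = \mf g^+ \oplus \mf g^-$, where $\mf g^{\pm} := \mf g \cap \mf{sl}(V)^{\pm}$. Via the self-duality isomorphism $V^{*} \cong V$ one has $\End(V) \cong \Sym^2(V) \oplus \bw V$, and under this identification $\mf{sl}(V)^+$ and $\mf{sl}(V)^-$ sit in different summands. The first hypothesis thus forces $\mf{sl}(V)^+$ and $\mf{sl}(V)^-$ to have no isomorphic irreducible $G$-submodules, and by semisimplicity (in characteristic $0$) any $G$-submodule of $\mf{sl}(V) = \mf{sl}(V)^+ \oplus \mf{sl}(V)^-$ splits along this direct sum.

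Next, since the commutator of two anti-symmetric endomorphisms is again anti-symmetric, $\mf g^-$ is a $G$-stable Lie subalgebra of $\mf{sl}(V)^-$, so the second hypothesis gives $\mf g^- \in \{0, \mf{sl}(V)^-\}$. If $\mf g^- = 0$, then using that the commutator of two symmetric endomorphisms is anti-symmetric I conclude $[\mf g, \mf g] = [\mf g^+, \mf g^+] \subset \mf{sl}(V)^- \cap \mf g = 0$, so $\mf g$ is commutative. If instead $\mf g^- = \mf{sl}(V)^-$, then $\mf g = \mf{sl}(V)^- \oplus \mf g^+$, and closure of $\mf g$ under the bracket together with $[\mf{sl}(V)^-, \mf{sl}(V)^+] \subset \mf{sl}(V)^+$ shows that $\mf g^+$ is stable under $\mathrm{ad}(\mf{sl}(V)^-)$ in addition to being $G$-stable.

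The main step is then to conclude $\mf g^+ \in \{0, \mf{sl}(V)^+\}$, which gives $\mf g \in \{\mf{sl}(V)^-, \mf{sl}(V)\}$. For this I will invoke the classical fact that $\mf{sl}(V)^+$ is an irreducible $\mf{sl}(V)^-$-module under the adjoint action --- this is the irreducibility of the isotropy representation of the symmetric pair $(\mf{sl}(V), \mf{sl}(V)^-)$, namely $(\mf{sl}(n), \mf{so}(n))$ in the orthogonal case and $(\mf{sl}(2n), \mf{sp}(2n))$ in the symplectic case, and can be verified by exhibiting the highest weight vector. This is the only ingredient going beyond the combinatorics of $G$-modules, and is the main obstacle. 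Putting the two cases together, every proper nonzero $G$-subalgebra of $\mf{sl}(V)$ is either $\mf{sl}(V)^-$ or commutative, which proves the first assertion. The ``in particular'' clause then follows at once: if $V$ is primitive, Proposition~\ref{prop:prim} rules out any nonzero abelian $G$-subalgebra, so the commutative possibilities collapse to $0$, and the complete list of $G$-subalgebras is $\{0, \mf{sl}(V)^-, \mf{sl}(V)\}$.
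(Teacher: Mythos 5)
Your proof is correct and follows essentially the same route as the paper: split $\mf g = \mf g^+ \oplus \mf g^-$ using the ``no common submodules'' hypothesis, reduce $\mf g^-$ to $0$ or $\mf{sl}(V)^-$ by the second hypothesis, handle $\mf g^- = 0$ by the bracket argument (symmetric brackets to anti-symmetric), and handle $\mf g^- = \mf{sl}(V)^-$ via the irreducibility of $\mf{sl}(V)^+$ as an $\mf{sl}(V)^-$-module. The paper phrases the last step as the maximality of $\mf{sl}(V)^-$ inside $\mf{sl}(V)$ and sketches the same irreducibility fact (for the orthogonal case, irreducibility of $\Sym^2(V)/W$ as an $\mf{so}(V)$-module) that you invoke as the isotropy representation of the symmetric pair.
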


\begin{proof}
Let $\mf{g}$ be a non-zero $G$-subalgebra of $\mf{sl}(V)$.  The intersection of $\mf{g}$ with $\mf{sl}(V)^-$ is a
$G$-subalgebra of $\mf{sl}(V)^-$ and therefore either 0 or all of $\mf{sl}(V)^-$.  First assume that the intersection
is zero.  Since the spaces of symmetric and anti-symmetric elements of $\mf{sl}(V)$ have no isomorphic $G$-submodules,
it follows that $\mf{g}$ is contained in the space of symmetric elements of $\mf{sl}(V)$.  But two symmetric elements
bracket to an anti-symmetric element.  Hence all brackets in $\mf{g}$ vanish and so $\mf{g}$ is commutative.  Now
assume that $\mf{g}$ contains all of $\mf{sl}(V)^-$.  It is then a standard fact that $\mf{sl}(V)^-$ is a maximal
subalgebra of $\mf{sl}(V)$ and so $\mf{g}$ is either $\mf{sl}(V)^-$ or $\mf{sl}(V)$.  (To see this, note that
$\mf{sl}(V)=\mf{sl}(V)^- \oplus \mf{sl}(V)^+$ and so to prove the maximality of $\mf{sl}(V)^-$ it suffices to show that
$\mf{sl}(V)^+$ is an irreducible representation of $\mf{sl}(V)^-$.  In the orthogonal case this amounts to the fact
that, as a representation of $\mf{so}(V)$, the space $\Sym^2(V)/W$ is irreducible, where $W$ is the line spanned by the
orthogonal form on $V$.  The symplectic case is similar.)
\end{proof}

\section{The partial derivatives of $\cubic$ have no linear syzygies II}
\label{s:syzygy3}

We now complete the proof of Theorem~\ref{prop:nosyz}.  

\begin{proposition}
\label{prop:subalg}
Assume $\fieldk$ is algebraically closed.  The $\SS_8$-subalgebras of $\mf{sl}(R_1(M_8))$ are exactly 0,
$\mf{so}(R_1(M_8))$ and $\mf{sl}(R_1(M_8))$.
\end{proposition}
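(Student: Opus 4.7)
The plan is to apply Proposition~\ref{prop:self-dual} to $V := R_1(M_8) \cong V_{4,4}$ with $G = \SS_8$. This reduces the task to verifying (i) $V$ is self-dual and primitive, (ii) $\Sym^2 V$ and $\bw V$ share no irreducible $\SS_8$-constituents, and (iii) $\mf{sl}(V)^- = \mf{so}(V)$ admits no proper non-zero $\SS_8$-subalgebra. Items (i) and (ii) come cheaply from material already in hand: self-duality of $V$ is automatic since $\SS_n$-representations have real character; the trivial $V_{(8)}$ lies in $\Sym^2 V$ by Proposition~\ref{prop:decomp}, so $V$ is in fact orthogonal and $\mf{sl}(V)^- \cong \mf{so}(V)$ realizes $\bw V$ as an $\SS_8$-module; primitivity follows from Corollary~\ref{cor:prim-test} once one checks, via the hook-length formula, that the irreducible constituents of $\mf{sl}(V)$ (listed in Proposition~\ref{prop:decomp}) --- namely $V_{6,2}, V_{4,4}, V_{4,2,2}, V_{2,2,2,2}, V_{5,1,1,1}, V_{3,3,1,1}$ --- all have dimension at least $14 = \dim V$; and (ii) is immediate from Proposition~\ref{prop:decomp}, since the partition labels (all parts even versus exactly four odd parts) are disjoint.

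The substantive step is (iii). As an $\SS_8$-module, $\mf{so}(V) \cong V_{5,1,1,1} \oplus V_{3,3,1,1}$, of dimensions $35$ and $56$, so any proper non-zero $\SS_8$-submodule is one of these two irreducibles. Let $\mf{g}$ be such a submodule that is also a Lie subalgebra. By Corollary~\ref{cor:prim-ss}, $\mf{g}$ is semisimple, and since it is $\SS_8$-irreducible its simple factors are transitively permuted, so $\mf{g}$ is a prime $\SS_8$-subalgebra of the form $\mf{g}_0^n$ with $\mf{g}_0$ simple. Proposition~\ref{prop:iso} and Corollary~\ref{cor:prime-constraint} then force $V$ to be isotypic of the form $V_0^{\oplus m}$ under $\mf{g}$, with $V_0 = W^{\otimes n}$ for some faithful irreducible $\mf{g}_0$-module $W$; writing $d = \dim W$, we have $d \geq d_0(\mf{g}_0)$ (the minimal faithful dimension of $\mf{g}_0$) and $d^n$ divides $14$.

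The proof concludes by a short enumeration. The equation $n \dim \mf{g}_0 \in \{35, 56\}$, with $\mf{g}_0$ simple (hence $\dim \mf{g}_0 \geq 3$), has only four solutions: $(n, \mf{g}_0) \in \{(1, \mf{sl}_6),\, (2, \mf{so}_8),\, (4, \mf{g}_2),\, (7, \mf{sl}_3)\}$. Each is eliminated by the combined constraints $d^n \mid 14$ and $d \geq d_0(\mf{g}_0)$: $\mf{sl}_6$ has no faithful irrep of dimension $7$ or $14$ (its small irreps have dimensions $6, 15, 20, 21, \dots$); and for the remaining three the minimal faithful dimensions $d_0 = 8,\, 7,\, 3$ respectively already force $d^n \geq 64,\, 2401,\, 2187$, all exceeding $14$. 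Hence no such $\mf{g}$ exists, completing (iii). The only genuine obstacle is the bookkeeping of the simple Lie algebras of dimension dividing $35$ or $56$, which is a short finite check.
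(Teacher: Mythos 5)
Your proof is correct and follows essentially the same route as the paper's: self-duality and orthogonality from real characters, primitivity via Corollary~\ref{cor:prim-test} and the dimension check on the constituents of $\mf{sl}(V)$ from Proposition~\ref{prop:decomp}, reduction to $\mf{so}(V)$ via Proposition~\ref{prop:self-dual}, and then ruling out the same four candidate prime subalgebras $\mf{sl}_6$, $\mf{so}_8^2$, $\mf{g}_2^4$, $\mf{sl}_3^7$ using Proposition~\ref{prop:iso} and Corollary~\ref{cor:prime-constraint}. The only stylistic difference is that you handle the $\mf{sl}_6$ case inline, whereas the paper isolates it as Lemma~\ref{lem:sl6}.
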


\begin{proof}
We begin by noting that any irreducible representation $V$ of any symmetric group $\SS_n$ is defined over $\Q$ and is
therefore orthogonal self-dual.  Thus $\mf{so}(R_1(M_8))=\mf{sl}(R_1(M_8))^-$ makes sense as an $\SS_8$-subalgebra.

For our particular $\SS_8$-representation $R_1(M_8)$, Proposition~\ref{prop:decomp} shows that $\Sym^2(R_1(M_8))$ has
five irreducible submodules of dimensions 1, 14, 14, 20 and 56, while $\bw{R_1(M_8)}$ has two irreducible submodules of
dimensions 35 and 56.  Furthermore, none of these seven irreducible representations are isomorphic.  As all irreducible submodules of
$\mf{sl}(R_1(M_8))$ have dimension at least that of $R_1(M_8)$ (which in this case is 14), we see from
Corollary~\ref{cor:prim-test} that $R_1(M_8)$ is primitive.  (Note that the one-dimensional representation occurring
in $\Sym^2(R_1(M_8))$ is the center of $\mf{gl}(R_1(M_8))$ and does not occur in $\mf{sl}(R_1(M_8))$.)

As $R_1(M_8)$ is primitive, multiplicity free and self-dual, we can apply Proposition~\ref{prop:self-dual}.  This shows
that to prove the present proposition we need only show that $\mf{so}(R_1(M_8))$ has no proper non-zero
$\SS_8$-subalgebras.  Thus assume that $\mf{g}'$ is a proper non-zero $\SS_8$-subalgebra of $\mf{so}(R_1(M_8))$.  As
$\mf{so}(R_1(M_8))=\bw{R_1(M_8)}$ has two irreducible submodules we see that $\mf{g}'$ must be one of these two
irreducible representations.  In particular, this shows that $\mf{g}'$ must be prime and so therefore isotypic.  By examining the list
of simple Lie algebras (see \cite[\S 9.4]{FH}), we see that there are four isotypic semi-simple Lie algebras of
dimension either 35 or 56:
\begin{displaymath}
\mf{g}_2^4, \qquad \mf{so}(8)^2, \qquad \mf{sl}(3)^7, \qquad \mf{sl}(6).
\end{displaymath}
The minimal dimensions of faithful representations of $\mf{g}_2$, $\mf{so}(8)$ and $\mf{sl}(3)$ are 7, 8 and 3.  As
$7^4$, $8^2$ and $3^7$ are all bigger than $\dim{R_1(M_8)}=14$, Corollary~\ref{cor:prime-constraint} rules out the first
three Lie algebras above.  (One can also rule out $\mf{g}_2^4$ and $\mf{sl}(3)^7$ by noting that the alternating group
$A_8$ does not act non-trivially on them.) We rule out $\mf{sl}(6)$ by using Proposition~\ref{prop:iso} and noting that
$\mf{sl}(6)$ has no faithful $14$-dimensional isotypic representation --- this is proved in Lemma~\ref{lem:sl6} below.
(One can also rule out $\mf{sl}(6)$ by noting that $A_8$ does not act non-trivially on it.)  This shows that $\mf{g}'$
cannot exist, and proves the proposition.
\end{proof}

\begin{lemma}
\label{lem:sl6}
The Lie algebra $\mf{sl}(6)$ has exactly two non-trivial irreducible representations of dimension at most $14$:  the
standard representation and its dual.  It has no 14-dimensional faithful isotypic representation.
\end{lemma}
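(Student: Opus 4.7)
The plan is to classify the small-dimensional irreducible representations of $\mf{sl}(6)$ via highest-weight theory, and then match dimensions. Every finite-dimensional irreducible representation of $\mf{sl}(6) = A_5$ is determined by a dominant integral weight $\lambda = \sum_{i=1}^{5} a_i \omega_i$ for the fundamental weights $\omega_1, \dots, \omega_5$. The five fundamental representations are the exterior powers $\bigwedge^{\!k}\! \fieldk^6$ for $k = 1, \dots, 5$, with respective dimensions $6, 15, 20, 15, 6$. The first task is to show that the only highest weights $\lambda$ giving an irreducible of dimension at most $14$ are $\lambda = 0$ (trivial, dimension $1$), $\lambda = \omega_1$ (standard, dimension $6$), and $\lambda = \omega_5$ (dual standard, dimension $6$).

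I would establish this by a short case check using Weyl's dimension formula. Since $\dim V_\lambda$ is monotonic in each coordinate $a_i$, it is enough to rule out the ``minimal'' candidate weights beyond $\omega_1$ and $\omega_5$: namely $2\omega_1$ and $2\omega_5$ (both $\Sym^2$, dimension $21$), $\omega_2$ and $\omega_4$ (dimension $15$), $\omega_3$ (dimension $20$), and $\omega_1+\omega_5$ (adjoint, dimension $35$). Any other dominant weight dominates one of these, hence gives dimension $\ge 15$. This yields the classification and, in particular, shows that no irreducible of $\mf{sl}(6)$ has dimension exactly $14$.

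For the second assertion, suppose $V$ is a faithful isotypic representation of $\mf{sl}(6)$ of dimension $14$. By definition $V \cong W^{\oplus k}$ for some irreducible $W$ and $k \ge 1$, with $k \cdot \dim W = 14$. Faithfulness forces $W$ to be non-trivial, so by the first part $\dim W \in \{6, 15, 20, 21, 35, \dots\}$. None of these integers divides $14$, so no such $V$ exists.

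The main obstacle is purely organizational: one must be confident that there are no ``exotic'' small-dimensional irreducibles of $\mf{sl}(6)$, which is handled by Weyl's formula (or, equivalently, by quoting the standard table of low-dimensional representations of $A_5$, e.g.\ \cite[\S 15.5]{FH}). There is no conceptual difficulty beyond this finite enumeration.
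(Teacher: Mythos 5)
Your proof is correct and follows essentially the same route as the paper's: highest-weight theory plus the monotonicity of the Weyl dimension formula to reduce to checking the minimal dominant weights ($\varpi_2, \varpi_3, \varpi_4$, $2\varpi_1$, $2\varpi_5$, $\varpi_1+\varpi_5$), then the divisibility observation $6 \nmid 14$. As a minor aside, your figure of $35$ for $\dim V_{\varpi_1+\varpi_5}$ (the adjoint) is correct; the paper lists $168$, which appears to be a typo, though it does not affect either argument since both exceed $14$.
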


\begin{proof}
For a dominant weight $\lambda$ let $V_{\lambda}$ denote the irreducible representation with highest weight $\lambda$.
If $\lambda$ and $\lambda'$ are two dominant weights then a general fact valid for any semi-simple Lie algebra states
\begin{displaymath}
\dim{V_{\lambda+\lambda'}} \ge \max(\dim{V_{\lambda}}, \dim{V_{\lambda'}}).
\end{displaymath}
(To see this, recall the Weyl dimension formula:
\begin{displaymath}
\dim{V_{\lambda}}=\prod_{\alpha^{\vee}>0} \frac{\langle \lambda+\rho, \alpha^{\vee} \rangle}{\langle \rho,
\alpha^{\vee} \rangle},
\end{displaymath}
where $\rho$ is half the sum of the positive roots and the product is taken over the positive co-roots $\alpha^{\vee}$.
Then note that $\langle \lambda, \alpha^{\vee} \rangle$ is positive for any dominant weight $\lambda$ and any positive
co-root $\alpha^{\vee}$.  Thus $\dim{V_{\lambda+\lambda'}} \ge \dim{V_{\lambda}}$.)

Now, let $\varpi_1, \ldots, \varpi_5$ be the fundamental weights for $\mf{sl}(6)$.  The representation $V_{\varpi_i}$
is just $\bigwedge^i V$, where $V$ is the standard representation.  For $2 \le i \le 4$ the space $V_{\varpi_i}$ has
dimension $\ge 15$.  Furthermore, a simple calculation shows that
\begin{displaymath}
\dim{V_{2 \varpi_1}}=21, \qquad \dim{V_{\varpi_1+\varpi_5}}=168, \qquad \dim{V_{2 \varpi_5}}=21.
\end{displaymath}
(Note that $V_{2 \varpi_1}$ is $\Sym^2(V)$, while $V_{2 \varpi_5}$ is its dual.  This shows why they are 21-dimensional.
To compute the dimension of $V_{\varpi_1+\varpi_5}$ we use the formula for the dimension of the relevant Schur functor,
\cite[Ex.~6.4]{FH}.) Thus only $V_{\varpi_1}$ and $V_{\varpi_5}$ have dimension at most 14, and they each have
dimension 6.  Since 6 does not divide 14 we find that there are no non-trivial 14-dimensional isotypic representations.
\end{proof}

We now have the following:

\begin{proposition}
\label{prop:so-ann}
The only element of $\Sym^3(R_1(M_8))$ annihilated by $\mf{so}(R_1(M_8))$ is zero.
\end{proposition}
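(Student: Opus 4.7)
The plan is to reduce this to a classical invariant-theoretic fact. Let $V = R_1(M_8)$. As noted in the proof of Proposition~\ref{prop:subalg}, $V$ is orthogonal self-dual, so it carries an $\SS_8$-invariant non-degenerate symmetric form $q \in \Sym^2(V)$. By definition (see \S\ref{s-liesub}), $\mf{so}(V) = \mf{sl}(V)^-$ is the Lie algebra of skew-adjoint endomorphisms of $(V,q)$, which integrates to the connected algebraic group $SO(V)$ acting on $V$ in its standard representation.

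First I would observe the standard fact that, because $SO(V)$ is connected, an element of a rational $SO(V)$-module is annihilated by $\mf{so}(V)$ if and only if it is fixed by $SO(V)$. Thus the proposition is equivalent to the assertion that $(\Sym^3 V)^{SO(V)} = 0$.

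Next I would invoke the classical description of the $SO(V)$-invariants on its standard representation: the ring $\Sym^{\bullet}(V)^{SO(V)}$ is the polynomial ring $\fieldk[q]$ generated by the invariant quadratic form. This is the first fundamental theorem of invariant theory for the orthogonal group applied to a single copy of $V$; equivalently, the categorical quotient $V /\!/ SO(V)$ is $\Spec \fieldk[q] \cong \mathbb{A}^1$, since for $\dim V \geq 2$ the group $SO(V)$ acts transitively on each nonempty quadric $\{q = c\}$. (Note that additional invariants of $SO(V)$ not coming from $O(V)$, such as Pfaffian-type determinants, arise only when one considers $\dim V$ copies of $V$ at once, so they do not contribute here.) Since $\fieldk[q]$ is concentrated in even degrees, $(\Sym^3 V)^{SO(V)} = 0$.

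There is essentially no obstacle in this argument; the only substantive point is the identification of the paper's abstractly-defined $\mf{so}(V) = \mf{sl}(V)^-$ with the Lie algebra of $SO(V,q)$ acting on $V$ via its defining representation, which is immediate from the definition of the transpose in \S\ref{s-liesub}. One could equally well argue by decomposing $\Sym^3 V$ into $\mf{so}(V)$-isotypic components (the harmonic cubics $V_{3\varpi_1}$ together with $q \cdot V$, both nontrivial irreducible representations), but the categorical-quotient argument is cleaner and requires no case analysis on $\dim V$.
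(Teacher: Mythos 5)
Your argument is correct, but it is a genuinely different route from the one in the paper. The paper's proof is a bare-hands computation: writing the annihilation condition for the elementary skew generators $E_{ij}$ as the system $x_i \frac{\partial s}{\partial x_j} = x_j \frac{\partial s}{\partial x_i}$ in an orthonormal basis, solving it for a cubic $s$ one pair $(i,j)$ at a time, and concluding that every variable would have to occur to an even power, which is impossible in degree three; a following remark in the paper partially conceptualizes this via $\mf{so}(2)$-subalgebras and the norm form, which is closer in spirit to what you do. You instead integrate the $\mf{so}(V)$-action to the connected group $SO(V,q)$ (legitimate in characteristic zero, for the rational module $\Sym^3 V$, and one may pass to $\ol{\fieldk}$ harmlessly since the annihilator condition is linear) and invoke the first fundamental theorem for the orthogonal group on a single copy of the standard representation: $\Sym^{\bullet}(V)^{SO(V)} = \fieldk[q]$ lives in even degrees, so there is no invariant cubic. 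Your approach buys generality (it shows at once that no odd-degree element is annihilated, with no coordinate case analysis) at the price of citing connectedness/integration and the FFT, whereas the paper's computation is elementary and self-contained. Two small points of hygiene: the extra $SO$-versus-$O$ invariants that require $\dim V$ vectors are bracket/determinant invariants, not Pfaffians (those belong to the symplectic side), and the transitivity of $SO(V)$ on the level sets $\{q = c\}$ holds only for $c \neq 0$; neither slip affects your conclusion, since the FFT statement you cite already suffices.
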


\cut{I changed $\cubic$ to $s$ in the following proof, since we're not using \emph{the} cubic.}

\begin{proof}
As mentioned, $R_1(M_8)$ has a non-degenerate symmetric inner product.  Pick an orthonormal basis $\{x_i\}$ of
$R_1(M_8)$ and let $\{x_i^*\}$ be the dual basis of $R_1(M_8)^*$.  We interpret $\Sym^{\bullet}(R_1(M_8))$ as the
polynomial ring in the $x_i$.  The space $\mf{so}(R_1(M_8))$ is spanned by elements of the form $E_{ij}=x_i \otimes
x_j^* - x_j \otimes x_i^*$.  Recall that, for an element $s$ of $\Sym(R_1(M_8))$, the element $x_i \otimes x_j^*$ of
$\End(R_1(M_8))$ takes $s$ to $x_i \frac {\partial s}{\partial x_j}$.  Thus we see that $s$ is annihilated by $E_{ij}$
if and only if 
\begin{equation}
\label{eq1}
x_i \frac {\partial s} {\partial x_j} = x_j \frac {\partial s} {\partial x_i} .
\end{equation}
Therefore $s$ is annihilated by all of $\mf{so}(R_1(M_8))$ if and only if the above equation holds for all $i$ and $j$.

Let $s$ be an element of $\Sym^3(R_1(M_8))$.  We now consider \eqref{eq1} for a fixed $i$ and $j$.  Write
\begin{displaymath}
s=g_3(x_j)+g_2(x_j) x_i+g_1(x_j) x_i^2+g_0(x_j) x_i^3
\end{displaymath}
where each $g_i$ is a polynomial in $x_j$ whose coefficients are polynomials in the $x_k$ with $k \ne i, j$.  Note that
$g_0$ must be a constant by degree considerations.  We have
\begin{displaymath}
\begin{split}
x_i \frac {\partial s} {\partial x_j} &=  g_3'(x_j) x_i + g_2'(x_j) x_i^2 + g_1'(x_j) x_i^3 \\
x_j \frac { \partial s} {\partial x_i} &= x_j g_2(x_j)+2 x_j g_1(x_j) x_i+3 x_j g_0(x_j) x_i^2.
\end{split}
\end{displaymath}
We thus find
\begin{displaymath}
g_2=0, \quad 2x_j g_1=g_3', \quad 3 x_j g_0=g_2', \quad g_1'=0.
\end{displaymath}
{}From this we deduce that $g_0=g_2=0$ and that $g_1$ is determined from $g_3$.  The constraint on $g_3$ is that it
must satisfy
\begin{equation}
\label{eq2}
g_3'(x_j)=x_j g_3''(x_j).
\end{equation}
Putting
\begin{displaymath}
g_3(x_j)=a+b x_j+c x_j^2+d x_j^3
\end{displaymath}
we see that \eqref{eq2} is equivalent to $b=d=0$.  We thus have
\begin{displaymath}
g_3(x_j)=a+c x_j^2, \qquad \textrm{and} \qquad g_1(x_j)=c
\end{displaymath}
and so
\begin{displaymath}
s = a + c (x_i^2+ x_j^2)
\end{displaymath}
is the general solution to \eqref{eq1}.

We thus see that if $s$ satisfies \eqref{eq1} for a particular $i$ and $j$ then $x_i$ and $x_j$ occur in $s$ with only
even powers.  Thus if $s$ satisfies \eqref{eq1} for all $i$ and $j$ then all variables appear to an even power.  This
is impossible, unless $s=0$, since $s$ has degree three.  Thus we see that zero is the only solution to \eqref{eq1}
which holds for all $i$ and $j$.
\end{proof}

\begin{remark}
The above computational proof can be made more conceptual.  By considering the equation \eqref{eq1} for a fixed $i$ and
$j$ we are considering the invariants of $\Sym^3(R_1(M_8))$ under a certain copy of $\mf{so}(2)$ sitting inside of
$\mf{so}(R_1(M_8))$.  The representation $R_1(M_8)$ restricted to $\mf{so}(2)$ decomposes as $S \oplus T$ where $S$ is
the standard representation of $\mf{so}(2)$ and $T$ is a 12-dimensional trivial representation of $\mf{so}(2)$.  We
then have
\begin{displaymath}
\Sym^3(R_1(M_8))^{\mf{so}(2)}=\bigoplus_{i=0}^3 \Sym^i(S)^{\mf{so}(2)} \otimes \Sym^{3-i}(T).
\end{displaymath}
Finally, our general solution to \eqref{eq1} amounts to the fact that the ring of invariants
$\Sym^{\bullet}(S)^{\mf{so}(2)}$ is generated by the norm form $x_i^2+x_j^2$.
\end{remark}

We can now prove Proposition~\ref{prop:l-zero}, which will establish Theorem~\ref{prop:nosyz}.

\begin{proof}[Proof of Proposition~\ref{prop:l-zero}]
To prove $\mf{g}=0$ we may pass to the algebraic closure of $\fieldk$; we thus assume $\fieldk$ is algebraically closed.
By Proposition~\ref{prop:subalg}, the Lie algebra $\mf{g}$ must be 0, $\mf{so}(R_1(M_8))$ or $\mf{sl}(R_1(M_8))$.  By
Proposition~\ref{prop:so-ann}, $\mf{g}$ cannot be $\mf{so}(R_1(M_8))$ or $\mf{sl}(R_1(M_8))$ since it annihilates
$\cubic$, and $\cubic$ is non-zero.  Thus $\mf{g}=0$.
\end{proof}

\section{The minimal graded free resolution of the graded ring of $M_8$}
\label{s:freeresolution}

We now determine the minimal graded free resolution of the invariant ring $R_{\bullet}(M_8)$.  We first review some
commutative algebra.  

\subsection{Betti numbers of modules over polynomial rings}

Let $P_{\bullet}$ be a graded polynomial ring over $\fieldk$ in finitely many indeterminates, each of positive degree.
Let $M$ be a finitely generated graded $P_{\bullet}$-module.  (To follow our convention of keeping track of graded
objects, we should write $M_{\bullet}$ rather than $M$.  But because we will be resolving $M$, we do not.)
\cut{I apologize that I have written $R_{\bullet}$.  It is awkward in this section.}

One can then find a surjection $F \to M$ with $F$ a finite free module having the following property: if $F' \to M$ is
another surjection from a finite free module then there is a surjection $F' \to F$ making the obvious diagram commute.  
This \emph{free envelope} $F \to M$ of $M$ is unique up to non-unique isomorphism.  

Build a resolution of $M$ by using free envelopes:
\begin{displaymath}
\cdots \to F_2 \to F_1 \to F_0 \to M \to 0
\end{displaymath}
Here $F_0$ is the free envelope of $M$ and $F_{i+1}$ is the free envelope of $\ker(F_i \to F_{i-1})$.  Define integers
$b_{i, j}$ by
\begin{displaymath}
F_i=\bigoplus_{j \in \Z} P_{\bullet}[-i-j]^{\oplus b_{i, j}}.
\end{displaymath}
The $b_{ij}$ are called the \emph{Betti numbers} of $M$ and collectively they form  the \emph{Betti diagram} of $M$.
They are independent of the choice of free envelopes, as $b_{i, j}$ is also the dimension of the $j$th graded piece of
$\Tor_i^{P_{\bullet}}(M, P_{\bullet}/I_{\bullet})$, where $I_{\bullet}$ is ideal of positive degree elements.  The Betti
numbers have the following properties:
\begin{itemize}
\item[(B1)] We have $b_{i, j}=0$ for all but finitely many $i$ and $j$.  This follows since each $F_i$ is finitely
generated and $F_i=0$ for $i$ large by Hilbert's syzygy theorem.
\item[(B2)] We have $b_{i, j}=0$ for $i<0$.  This follows from the definition.
\item[(B3)] If $b_{i_0, j}=0$ for $j \le j_0$ then $b_{i, j}=0$ for all $i \ge i_0$ and $j \le j_0$.  This follows from
the fact that if $d$ is the lowest degree occurring in a module $M$ and $F \to M$ is a free envelope then $F_d \to M_d$
is an isomorphism, and thus the lowest degree occurring in $\ker(F \to M)$ is at least $d+1$.
\item[(B4)] In particular, if $M$ is supported in non-negative degrees then $b_{i, j}=0$ for $j<0$.
\item[(B5)] Let $f(k)=\dim{M_k}$ (resp.\ $g(k)=\dim{P_k}$) denote the Hilbert function of $M$ (resp.\ $P_{\bullet}$).
Then
\begin{displaymath}
f(k)=\sum_{i, j \in \Z} (-1)^i \cdot b_{i, j} \cdot g(k-i-j).
\end{displaymath}
This follows by taking the Euler characteristic of the $k$th graded piece of $\cdots \to F_1 \to F_0 \to M$.
\end{itemize}
In particular,  if $M$ is supported in non-negative degrees, then its Betti diagram is contained in a bounded subset of
the first quadrant.

\subsection{Betti numbers of graded algebras}

Let $R_{\bullet}$ be a finitely generated graded $k$-algebra,  generated in degree one.  Let $P_{\bullet}=
\Sym^{\bullet}(R_1)$ be the graded polynomial algebra on the first graded piece, so $R_{\bullet}$ is a
$P_{\bullet}$-module, and we can speak of its Betti numbers of $R_{\bullet}$ (as a $P_{\bullet}$-module).

Assume now that the ring $R_{\bullet}$ is Gorenstein and a domain.  The canonical module $\omega_R$ of $R_{\bullet}$ is
then naturally a graded module.  Furthermore, there exists an integer $a$, called the \emph{$a$-invariant} of
$R_{\bullet}$, such that $\omega_R$ is isomorphic to $R_{\bullet}[a]$.  We now have the following important property of
the Betti numbers of $R_{\bullet}$:
\begin{itemize}
\item[(B6)] We have $b_{i, j}=b_{r-i, d+a-j}$ where $d=\dim{R_{\bullet}}$,
\begin{displaymath}
r=\dim{P_{\bullet}}-\dim{R_{\bullet}}=\codim(\Spec(R_{\bullet}) \subset \Spec(P_{\bullet})),
\end{displaymath}
and $a$ is the $a$-invariant of $R_{\bullet}$.
\end{itemize}
No doubt this formula appears in the literature, but we derive it here for completeness.  We have $\Ext^i_{P_{\bullet}}
(R_{\bullet}, \omega_{P_{\bullet}}) \cong \omega_R$ if $i=r$ and $0$ if $i \neq r$.  If $n$ is the dimension of
$P_{\bullet}$, then $\omega_{P_{\bullet}} \cong P_{\bullet}[-n]$.  Since $R_{\bullet}$ is Gorenstein we have $\omega_R
\cong R_{\bullet}[a]$.  Therefore we obtain a minimal free resolution $\cdots \to G_1 \to G_0 \to R_{\bullet}[a] \to 0$
of $R_{\bullet}[a]$ by $G_i=\Hom_{P_{\bullet}}(F_{r-i}, P_{\bullet}[-n])$.  Then $\cdots \to G_1[-a] \to G_0[-a] \to
R_{\bullet} \to 0$ is a minimal free resolution of $R_{\bullet}$, and by uniqueness of the resolution we therefore have
$G_i[-a] \cong F_i$ for each $i$.  Now $G_i[-a] \cong \bigoplus_{j'} P_{\bullet}[-n-r+i+j'-a]$, and so
\begin{displaymath}
\bigoplus_{j'} P_{\bullet}[-n+r-i+j'-a]^{b_{r-i,j'}} \cong \bigoplus_j P_{\bullet}[-i-j]^{b_{i,j}}.
\end{displaymath}
Equating components of the same degree gives $-n+r-i+j'-a=-i-j$, or $j'=n-r+a-j$.  Hence $b_{i,j}=b_{r-i, n-r+a-j}
=b_{r-i, d+a-j}$.

\subsection{The minimal graded free resolution of  $R_{\bullet}(M_8)$}
\label{s:betti}

We begin with the following result:

\begin{proposition}
\label{prop:goren}
The ring $R_{\bullet}(M_8)$ is Gorenstein with $a$-invariant $-2$.
\end{proposition}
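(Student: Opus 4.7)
The plan is to verify the two assertions separately, both of which reduce to facts already assembled in the paper.

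For the Gorenstein property, I would argue as follows. The ring $R_{\bullet}(M_8)$ arises as a ring of $\SL(2)$-invariants (with a semi-invariance condition), so by Hochster--Roberts it is Cohen--Macaulay; it is moreover a domain since $M_8$ is irreducible. I would then apply the criterion of Bruns--Herzog (Cor.~4.4.6): a graded Cohen--Macaulay domain is Gorenstein iff the numerator of its Hilbert series, written in lowest terms, is palindromic. Inspection of \eqref{howenowe} shows that the numerator $1 + 8t + 22t^2 + 8t^3 + t^4$ is indeed palindromic, so $R_{\bullet}(M_8)$ is Gorenstein. As an alternative, one could invoke Knop's theorem (cited in \S\ref{s:miscalgebra}) directly: $\SL(2)$ is a subgroup of $\SL((\fieldk^2)^8)$ acting linearly on $(\fieldk^2)^8$, and one checks that the unstable locus for the standard linearization has codimension at least $2$, yielding Gorenstein-ness.

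For the $a$-invariant, I would use the standard fact that for a Cohen--Macaulay graded ring $R_{\bullet}$ with Hilbert series $P(t)/(1-t)^d$ in lowest terms (so $P(1) \neq 0$), one has $a(R_{\bullet}) = \deg P - d$. In our case $\dim R_{\bullet}(M_8) = 6$ (since $M_8$ has projective dimension $5$) and $\deg P = 4$, so $a = 4 - 6 = -2$. This can be cross-checked against Knop's explicit formula (noted parenthetically in \S\ref{s:miscalgebra}) giving $a = -\gcd(n,m+1)$ for the $a$-invariant of $n$ equally weighted points in $\proj^m$; taking $n=8$, $m=1$ gives $-\gcd(8,2) = -2$.

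There is no real obstacle: once the Hilbert series \eqref{howenowe} and Cohen--Macaulayness are in hand, both Gorenstein-ness and the numerical value of $a$ are essentially immediate. The only care needed is to make sure the Hilbert series is written in lowest terms (which it is, since the numerator does not vanish at $t=1$) so that the formula $a = \deg P - d$ applies directly.
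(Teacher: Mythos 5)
Your proposal is correct and matches the paper's own proof essentially step for step: both deduce Cohen--Macaulayness from Hochster--Roberts, then apply Stanley's criterion (Bruns--Herzog, Cor.~4.4.6) to the palindromic numerator of the Hilbert series~\eqref{howenowe}, and read off $a = \deg f - d = 4 - 6 = -2$. The only difference is cosmetic: the paper pins down the Hochster--Roberts input explicitly (taking $V$ to be the space of $2 \times 8$ matrices and $G = \SL(2) \times T$ with $T$ the maximal torus of $\SL(8)$, so the ``semi-invariance'' condition you mention is absorbed into the group), whereas you state this more informally; and you additionally cite Knop's theorem as an independent route, which the paper reserves for a remark about $N_8$ in \S\ref{s:miscalgebra}.
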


\begin{proof}
We recall a theorem of Hochster--Roberts \cite[Theorem~6.5.1]{BrunsHerzog}:  if $V$ is a representation of the reductive
group $G$ (over a field $\fieldk$ of characteristic zero) then the ring of invariants $(\Sym^{\bullet}{V})^G$ is
Cohen--Macaulay.  As our ring $R_{\bullet}(M_8)$ can be realized in this manner, with $V$ being the space of
$2 \times 8$ matrices and $G=\SL(2) \times T$, where $T$ is the maximal torus in $\SL(8)$, we see that
$R_{\bullet}(M_8)$ is Cohen--Macaulay.  We next recall a theorem of Stanley \cite[Corollary~4.4.6]{BrunsHerzog}:  if
$R_{\bullet}$ is a Cohen--Macaulay ring generated in degree one with Hilbert series $f(t)/(1-t)^d$, where $d$ is the
Krull dimension of $R_{\bullet}$, then $R_{\bullet}$ is Gorenstein if and only if the polynomial $f$ is symmetric.  In
this case, the $a$-invariant of $R_{\bullet}$ is $\deg{f}-d$.  Going back to our situation, the Hilbert series of our
ring is given in \eqref{howenowe}.  The numerator is symmetric of degree four and the denominator has degree six.  Thus
$R_{\bullet(M_8)}$ is Gorenstein with $a=-2$.
\end{proof}

We can now deduce the Betti diagram of $R_{\bullet}(M_8)$:

\begin{proposition}
\label{prop:betti}
The Betti diagram of $R_{\bullet}(M_8)$ is given by:
\vskip 2ex
\begin{center} \rm
\begin{tabular}{|c||c|c|c|c|c|c|c|c|c|}
\hline
$j \bs i$ & \hskip .2em 0 \hskip .2em & 1 & 2 & 3 & 4 & 5 & 6 & 7 &
\hskip .2em 8 \hskip .2em \\
\hline
\hline
0 & 1 & 0 & 0 & 0 & 0 & 0 & 0 & 0 & 0 \\
\hline
1 & 0 & 14 & 0 & 0 & 0 & 0 & 0 & 0 & 0 \\
\hline
2 & 0 & 0 & 175 & 512 & 700 & 512 & 175 & 0 & 0 \\
\hline
3 & 0 & 0 & 0 & 0 & 0 & 0 & 0 & 14 & 0 \\
\hline
4 & 0 & 0 & 0 & 0 & 0 & 0 & 0 & 0 & 1 \\
\hline
\end{tabular}
\end{center}
\vskip 2ex
All $b_{i,j}$ not shown  are zero.
\end{proposition}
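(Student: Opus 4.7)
The plan is to pin down each entry of the Betti diagram by combining the general properties (B1)--(B6) of Betti numbers with the specific information about $R_{\bullet}(M_8)$ already established. Since the ring is generated in degree one, $b_{0,0}=\dim R_0=1$ and $b_{0,j}=0$ for $j>0$. Since the ideal of $M_8$ is generated in degree two (Corollary~\ref{generation}) with $\dim I_2(M_8)=14$ (Proposition~\ref{p:repfacts}(b)), we have $b_{1,1}=14$ and $b_{1,j}=0$ for $j\ne 1$. Theorem~\ref{prop:nosyz} asserts precisely that there are no linear syzygies among the fourteen quadratic generators, i.e.\ $b_{2,1}=0$; property (B3) then forces $b_{i,j}=0$ for all $i\ge 2$ and $j\le 1$.

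Next I would exploit the Gorenstein property. By Proposition~\ref{prop:goren}, $R_{\bullet}(M_8)$ is Gorenstein with Krull dimension $d=6$ and $a$-invariant $-2$, and the embedding codimension is $r=14-6=8$; the symmetry (B6) therefore reads $b_{i,j}=b_{8-i,\,4-j}$. Reflecting everything determined so far yields $b_{8,4}=1$ with $b_{8,j}=0$ for $j\ne 4$, $b_{7,3}=14$ with $b_{7,j}=0$ for $j\ne 3$, and $b_{i,3}=0$ for $i\le 6$, $b_{i,4}=0$ for $i\le 7$. Together with (B4) and its Gorenstein mirror $b_{i,j}=0$ for $j\ge 5$, the only positions that can still be nonzero lie in row $j=2$ with $2\le i\le 6$.

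These five remaining entries are forced by the Hilbert series via (B5): we have $\sum_{i,j}(-1)^i b_{i,j}\,t^{i+j}=(1-t)^{14}H(t)$, which using \eqref{howenowe} becomes the polynomial identity
\[
(1-t)^{8}\bigl(1+8t+22t^2+8t^3+t^4\bigr)\;=\;\sum_{i,j}(-1)^i b_{i,j}\,t^{i+j}.
\]
A direct expansion of the left-hand side produces coefficients $175,-512,700,-512,175$ at $t^4,t^5,t^6,t^7,t^8$, giving $b_{2,2}=175$, $b_{3,2}=512$, $b_{4,2}=700$, $b_{5,2}=512$, $b_{6,2}=175$; the palindrome agrees with the Gorenstein symmetry $b_{i,2}=b_{8-i,2}$, providing a built-in consistency check. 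Every other coefficient that appears on the left matches the contribution from Betti numbers already pinned down, so the bookkeeping is self-checking. No step of this argument is genuinely hard: the nontrivial input, absence of linear syzygies, has already been supplied by the representation-theoretic and Lie-theoretic analysis of \S\ref{s:syzygy1}--\S\ref{s:syzygy3}, and what remains is an exercise in Hilbert-series arithmetic and Gorenstein duality.
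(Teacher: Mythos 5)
Your argument is correct and follows essentially the same route as the paper's proof: pin down rows $j=0,1$ from Kempe generation, degree-two generation of the ideal, and the absence of linear syzygies (Theorem~\ref{prop:nosyz}), reflect via the Gorenstein symmetry $b_{i,j}=b_{8-i,4-j}$ of (B6), and then extract the remaining row $j=2$ from the Hilbert series via (B5). The only cosmetic difference is that you expand $(1-t)^8(1+8t+22t^2+8t^3+t^4)$ directly, whereas the paper phrases the same computation as solving an upper triangular system for the $b_{i,2}$.
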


\begin{proof}
We first note that (B6) gives $b_{8-i, 4-j}=b_{i, j}$ as $r=8$, $d=6$ and $a=-2$.  We thus have the symmetry of the
table. Now, by (B2) and (B4), $b_{i,j}=0$ if either $i$ or $j$ is negative.  Thus  $b_{i,j}=0$ if $i>8$ or $j>4$ by
symmetry.  Next, observe that $P_{\bullet} \to R_{\bullet}(M_8)$ is the free envelope of $R_{\bullet}(M_8)$, where
$P_{\bullet}=\Sym^{\bullet}(R_1(M_8))$.  This gives the $i=0$ column of the table.  We now look at the $i=1$ column.
The 14 generators have no linear relations, so $b_{1,0}=0$.  By (B3), $b_{i,0} =0$ for $i \ge 1$.  We also know that
there are 14 quadratic relations, so $b_{1,1}=14$.  We now look at the $i=2$ column of the table.  The 14 quadratic
relations have no linear syzygies (Theorem~\ref{prop:nosyz}), so $b_{2,1}=0$.  Using (B3) again, we conclude $b_{i,1}=0$
for $i \ge 2$.  We have thus completed the first two rows of the table.  The last two rows are then determined by
symmetry.  The middle row can now be determined from (B5) by evaluating both sides at $k=2, \ldots, 10$ and solving the
resulting upper triangular system of equations for $b_{i,2}$.  (In fact, the computation is simpler than that since
$b_{i,2}=b_{8-i,2}$ and we know $b_{0,2}=b_{1,2}=0$, the latter vanishing coming from Corollary~\ref{prop:i2-i3}.)
\end{proof}

\begin{remark}
\label{r:denver}
Proposition~\ref{prop:betti} (in particular, the $i=1$ column of the table) shows that $I_{\bullet}(M_8)$ is generated
by its degree two piece.  Thus we have another proof of Corollary~\ref{generation}.
\end{remark}

\begin{remark}
The resolution of $R_{\bullet}(M_8)$ as a $P_{\bullet}$-module, without any consideration of grading, is given by
Freitag and Salvati Manni \cite[Lemma~1.3, Theorem~ 1.5]{fs}.  It was obtained by computer.
\end{remark}

\section{The degree one and two invariants of $N_8$: generation of
  $R_{\bullet}(N_8)$, and representation theory}
\label{s:N8}

In this section, we determine part of the structure of the ring
$R_{\bullet}(N_8)$ of invariants of $8$ points in $\proj^3$ with the
assistance of a computer.  In particular, we show that this ring is
generated in degree $1$ and $2$, and we
determine the actions of Gale duality and $\SS_8$ on the generators.
As a consequence, we show that the Gale-invariant invariants are
precisely the subring of $R_{\bullet}(N_8)$ generated in degree $1$,
and the skew quintic $\quintic$ is the unique skew quintic relation in both
$R_{\bullet}(N_8)$ and $R_{\bullet}(N'_8)$.

\begin{proposition}
The ring of invariants $R_{\bullet}(N_8)$ is generated in degrees
  one and two.\label{browncow}
\end{proposition}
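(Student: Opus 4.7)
The plan is a computer-assisted verification, leveraging the explicit tableaux description of invariants. By the First Fundamental Theorem of invariant theory for $\SL(4)$, the graded piece $R_d(N_8)$ has a basis of semistandard $4 \times 2d$ tableaux with entries from $\{1, \ldots, 8\}$ each appearing exactly $d$ times (\S\ref{ss:p1}), making it directly accessible to computation.

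First, I would compute the Hilbert series of $R_{\bullet}(N_8)$ independently---either by counting semistandard tableaux or via a Molien-type formula for the $\SL(4)$-invariant part of $(\Sym^d \fieldk^4)^{\otimes 8}$. As recorded in \S\ref{s:miscalgebra}, this series has numerator $1+4t+31t^2+40t^3+31t^4+4t^5+t^6$ of degree $6$ and denominator $(1-t)^{10}$.

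Second, I would argue that every minimal generator of $R_{\bullet}(N_8)$ lies in degree at most $6$. Since $\dim R_1(N_8) = 14$ exceeds the Krull dimension $10$, a generic linear system of parameters drawn from $R_1(N_8)$ exists; the corresponding Artinian quotient is Gorenstein (by Knop's theorem, cited in \S\ref{s:miscalgebra}) with Hilbert function equal to the numerator of the Hilbert series, hence has socle in degree $6$. Graded Nakayama then bounds the degrees of minimal generators of $R_{\bullet}(N_8)$ by $6$.

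Third, it remains to verify for each $d \in \{3, 4, 5, 6\}$ that the multiplication map
\[
R_1(N_8) \otimes R_{d-1}(N_8) \;\oplus\; R_2(N_8) \otimes R_{d-2}(N_8) \;\longrightarrow\; R_d(N_8)
\]
is surjective. Expressed in the semistandard tableaux basis via Pl\"ucker straightening, this reduces to four finite rank computations on explicit matrices. The main obstacle is their size---$\dim R_6(N_8)$ is in the thousands, and straightening products of high-degree tableaux is expensive---but decomposing each $R_d(N_8)$ into $\SS_8$-isotypic components, and working with one irreducible at a time, keeps each sub-computation manageable in a computer algebra system such as Macaulay2.
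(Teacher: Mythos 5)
Your approach is genuinely different from the paper's. The paper bounds the generation degree at $\le 4$ via a toric (Gel'fand--Tsetlin) degeneration in the style of Alexeev--Brion, computing a Hilbert basis with 4ti2, and then does the tableaux/straightening linear algebra only in degrees $3$ and $4$. You instead propose a Hilbert-series/Artinian-reduction bound of $\le 6$ followed by linear algebra in degrees $3$ through $6$. Aside from the issue below, your route would work, but it is less efficient: the sharper degree bound from the toric degeneration cuts the final verification roughly in half, and $\dim R_6(N_8)$ is on the order of $45{,}000$ (not ``thousands''), so the degree-$6$ check would be by far the heaviest step.

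There is, however, a real gap in your step two. The inference ``$\dim R_1(N_8)=14 > 10 = \dim R_{\bullet}(N_8)$, hence a generic linear system of parameters in $R_1(N_8)$ exists'' is not valid in general. Cohen--Macaulayness (Hochster--Roberts) plus the domain property do not force the ideal generated by $R_1$ to be $\mf{m}$-primary. What is actually required is that the degree-one invariants have no common zero on $\Proj R_{\bullet}(N_8)=N_8$, equivalently that the rational map $N_8 \dashrightarrow \proj(R_1(N_8)^*)$ is a morphism; only then is the numerator of the Hilbert series the Hilbert function of an Artinian reduction by linear forms, and only then does the Nakayama argument bound generator degrees. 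This fact is true here, but it is not a formal dimension count: it amounts to showing that for every semistable $8$-tuple in $\proj^3$ there is a partition $\{A,A^c\}$ of $\{1,\dots,8\}$ into two $4$-sets with $\det(A)\det(A^c)\ne 0$. One must also be careful about circularity: the finiteness of $N_8 \to N'_8$ is, in the paper, a \emph{consequence} of Propositions~\ref{browncow} and \ref{anotherbrowncow}, so it cannot be assumed. If you close this gap with a direct GIT/semistability argument (or simply verify computationally that ten random linear forms are a regular sequence), the rest of your plan goes through; otherwise the degree bound, and hence the whole strategy, is unjustified. As a side note, the Gorenstein property (Knop) is not actually needed for your argument---Cohen--Macaulayness and the degree of the numerator already give the vanishing of the Artinian reduction beyond degree $6$.
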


\begin{proof}
A filtration of the ring of invariants such that the associated graded ring is the semigroup of Gel'fand-Tsetlin patterns (or
equivalently,
semistandard tableaux), as described for example by Alexeev and Brion
in \cite[\S 5.1]{ab}\cut{in Gonciulea-Lakshmibai, Caldero,
Alexeev-Brion, or Foth-Hu}, can be used to show that the ring is
generated in degrees at most $4$.  (Code using the package 4ti2 is available at
\cite{hmsvcode},
 but the method is standard.)  

To show that the ring is  generated in degrees $1$ and $2$ is
then just linear algebra.  We compute the dimensions of the subspaces
of the degree $3$ and $4$ pieces generated by the degree $1$ and $2$ tableaux.  
Magma code is available 
at  \cite{hmsvcode}.
(In more detail: We use a Grobner basis
for the ideal of Pl\"ucker relations.  We define a polynomial ring in 70
variables corresponding to the $\binom 8 4$ minors of a $4 \times 8$
matrix.  In a certain term order, the Pl\"ucker relations are a Grobner
basis for the Pl\"ucker ideal, see \cite[Thm.~14.6, p.~277]{ms}.  We define the monomials corresponding to the Hilbert basis
output from the previous 4ti2 program.   The point of using the
Grobner basis is that one can quickly implement the straightening
relations.  The normal form of any polynomial will have all
semistandard tableaux as monomials.   Now, one simply multiplies degree
$1$ and degree $2$ tableaux and then computes the dimension of degree
$3$, resp.\ degree $4$, subspaces spanned by them.)
\end{proof}

By Corollary~\ref{c:united},  there are no degree $2$ relations
for $N'_8$, so $\Sym^2(R_1(N_8)) \rightarrow R_2(N_8)$ is an injection.

\begin{proposition} $\quad$
\begin{enumerate}
\item[(a)] Gale duality acts via the trivial representation on $R_1(N_8)$.
\item[(b)] Gale duality acts via the sign representation on $R_2(N_8) /
\Sym^2 R_1(N_8)$.
\end{enumerate}
Thus by Proposition~\ref{browncow}, $N'_8 := \Proj \Sym^{\bullet} (R_1(N_8))$ is the quotient of $N_8$ by Gale-duality.\label{anotherbrowncow}
\end{proposition}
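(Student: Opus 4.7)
The plan for Proposition~\ref{anotherbrowncow} is to combine the tableau description of Gale duality from \cite{hm} with Proposition~\ref{browncow}, reducing the entire statement to a finite representation-theoretic calculation on $R_1(N_8)$ and $R_2(N_8)$. In this description a degree-$d$ invariant for $N_8$ is a $4 \times 2d$ tableau with entries in $\{1,\ldots,8\}$, each appearing $d$ times, and Gale duality sends such a tableau to the one obtained by replacing each column by its $4$-element complement in $\{1,\ldots,8\}$ (with the standard sign convention). Part (a) is then immediate: a degree-$1$ tableau is $4 \times 2$, so its two columns $C_1,C_2$ partition $\{1,\ldots,8\}$, and complementing swaps them. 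Since the corresponding invariant is the commutative product of two $4 \times 4$ Pl\"ucker determinants, a column swap fixes it, so Gale duality is the identity on $R_1(N_8)$.

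For part (b), I would decompose both sides of the inclusion $\Sym^2 R_1(N_8) \hookrightarrow R_2(N_8)$ (injective by Corollary~\ref{c:united}) into irreducible $\SS_8$-representations. The left side is $\Sym^2 V_{2,2,2,2}$, which character theory handles immediately. The right side $R_2(N_8)$ I would compute by continuing the computer-assisted Gr\"obner/straightening procedure from Proposition~\ref{browncow} one degree further. Taking the quotient identifies $W := R_2(N_8)/\Sym^2 R_1(N_8)$ as an explicit $\SS_8$-module. Since Gale duality is $\SS_8$-equivariant (it commutes with permuting labels) and will turn out to be multiplicity-free on $W$, its action on each isotypic component is a scalar $\pm 1$, determined by complementing one chosen representative $4\times 4$ tableau per component and reading off the sign. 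The expected outcome is that Gale acts as $-1$ on all of $W$.

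With (a) and (b) established, the concluding ``Thus'' statement would follow as follows. Since $R_\bullet(N_8)$ is generated in degrees $1$ and $2$, and $\Sym^2 R_1(N_8)$ already lies in the subring $R_\bullet(N'_8)$ generated by $R_1(N_8)$, the ring $R_\bullet(N_8)$ is generated as an $R_\bullet(N'_8)$-algebra by $W$. The Gale $\Z/2$-action is trivial on $R_\bullet(N'_8)$ and is $-1$ on $W$, so monomials with an even number of $W$-factors are Gale-invariant while those with an odd number are anti-invariant. To conclude that the Gale-invariants equal $R_\bullet(N'_8)$ it then suffices to check that $W \cdot W \subseteq R_\bullet(N'_8)$ inside $R_4(N_8)$, since by induction this forces every even-degree $W$-monomial into $R_\bullet(N'_8)$. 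I would verify this by comparing $\SS_8$-characters, or equivalently the Hilbert series of $R_\bullet(N_8)$ and $R_\bullet(N'_8)$ recorded in \S\ref{s:miscalgebra}, against the $\pm 1$ Gale-eigenspace dimensions predicted by (a) and (b) in degree $4$.

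The main obstacle will be the computer-assisted decomposition of $R_2(N_8)$ in part (b) and the single further degree-$4$ check at the end; these are the steps where, as the introduction acknowledges, pure thought is replaced by machine computation. Everything else reduces to standard tableau combinatorics plus $\SS_8$-character theory.
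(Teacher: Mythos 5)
Your argument for (a) is essentially the paper's, with one small omission: the description of the Gale involution from \cite{hm} carries a sign factor $\sgn(\text{abcdefgh})$ for \emph{each} column, so a degree-$1$ tableau picks up the product of two such signs. The paper checks $\sgn(\text{abcdefgh})=\sgn(\text{efghabcd})$, so the two factors square to $1$ and only the (harmless) column swap remains. ``A column swap fixes it'' by itself does not rule out an extra $-1$.

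For (b) you take a genuinely different route. The paper is combinatorial: it counts $21$ semistandard tableaux to get $\dim\bigl(R_2(N_8)/\Sym^2 R_1(N_8)\bigr)=21$, finds $42$ degree-$2$ tableaux permuted in $21$ Gale-pairs, and notes that the pair-differences span the $(-1)$-eigenspace. Your route is representation-theoretic: since $W := R_2(N_8)/\Sym^2 R_1(N_8)$ is in fact irreducible of type $V_{3,1,1,1,1,1}$ (as the paper computes in \S\ref{s:N8}) and Gale is $\SS_8$-equivariant of order two, Schur's lemma forces Gale to act by a scalar $\pm 1$ on $W$, and one evaluates on a single element. Both routes involve a computation, but yours has the virtue of reusing the plethysm the paper needs later anyway.

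On the concluding ``Thus'', you are right to flag that $W\cdot W \subseteq R_{\bullet}(N'_8)$ is an honest extra condition. Averaging a presentation of $R_{\bullet}(N_8)$ over the Gale $\Z/2$-action shows that the invariant subring is generated by $R_1$, $\Sym^2 R_1$, and $W\cdot W$; so it coincides with $R_{\bullet}(N'_8)$ precisely when $W\cdot W$ lands in $R_4(N'_8)$. The paper's one-line ``Thus'' does not make this explicit and implicitly leans on the surrounding computer verification. Your proposed check is, however, circular as stated: (a) and (b) do not by themselves ``predict'' the degree-$4$ Gale-eigenspace dimensions --- that is exactly the multiplication data at issue --- so the verification needs an independent input, e.g.\ the separately computed Hilbert series of $N'_8$ in \S\ref{s:miscalgebra} combined with a trace (Molien) computation for Gale on $R_4(N_8)$, or the degree count $\deg N_8 = 112 = 2\cdot 56 = 2\deg N'_8$.
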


\begin{proof}
By \cite[Thm.~1.12, p.~690]{hm}, 
Gale-duality acts as follows.   The column of a tableaux, with rows
$\text{abcd}$, is replaced by $\text{efgh}$, where $\{ \text{a}, \text{b}, \text{c}, \dots, \text{h} \} = \{ 1,
\dots, 8 \}$, and $\text{a}< \text{b} < \text{c} < \text{d}$ and
$\text{e}< \text{f} < \text{g} < \text{h}$, with a sign factor of
$\sgn( \text{abcdefgh})$ (where ``$\text{abcdefgh}$'' is interpreted as
an element of $\SS_8$, i.e.\ $1 \mapsto a$, $2 \mapsto b$, etc.).  

(a)   A degree one tableau (a generator of $R_1(N_8)$) has two columns
$\text{abcd}$ and $\text{efgh}$, which are swapped by this process.  As
$\sgn(\text{abcdefgh}) = \sgn(\text{efghabcd})$, the sign contributions cancel.
Thus every degree one tableau is Gale-invariant.

(b)  
By
enumerating
semistandard tableaux, we see that $$\dim 
R_2(N_8) /
\Sym^2 R_1(N_8) = 21.$$  One readily sees that there are $42$ degree
$2$ tableaux  not fixed by the Gale-involution.  They come in
$21$ pairs, and the differences of elements of each pair form a base
for the $(-1)$-eigenspace of the Gale involution.
\end{proof}

\cut{Ravi says:  I'd asked (Feb. 12 2010):  if we knew a little more, e.g. no linear
syzygies among the quartics, would that be enough to know the entire
free resolution of $N'_8$?  No one had any ideas.
Ben knows no degree $2$ relations, see August 11, 2008 and Oct. 23,
2008.  
Also no degree $3$ relations, 10/23/08.  I think I can do both via Bezout.  He knows the relations are
in degree at most $12$, 11/3/08.  I'm not sure if that includes the
degree $2$ generators.

Andrew says: 
I think I may be able to prove that the quartics have no linear syzygies, as follows.  Just as with
the linear syzygies of the derivatives of $\cubic$, the linear syzygies of the derivatives of $\quintic$ form
an $\SS_8$-stable Lie subalgebra of $\End(R_1(N_8))$.  As $\SS_8$ Lie algebras, $\End(R_1(N_8)) \cong \End(R_1(M_8))$,
and so our classification of $\SS_8$-subalgebras of the latter gives a classification of $\SS_8$-subalgebras of the
former.  As before, I'm guessing that $\mf{so}(R_1(N_8))$ cannot annihilate $\quintic$ since $\quintic$ has odd
degree.  Thus the annihilator of $\quintic$ is 0, i.e., there are no linear syzygies of its derivatives.}

\subsection{The skew quintic $\quintic$ is the unique skew quintic
  relation} 

We can now readily compute the representation of $\SS_8$ on $R_2(N_8) /
\Sym^2(R_1(N_8))$, and show that it is isomorphic to the irreducible
representation $V_{3,1,1,1,1,1}$.  To do this, we analyze $R_2(N_8)$
using  Schur-Weyl duality,
$$\oplus_{\lambda \vdash 8} S_\lambda( \Sym^2(C^4)) \otimes V_\lambda,$$ 
and examine  the case $\lambda = (3,1,1,1,1,1)$.
We seek the dimension of the $SL_4$-invariant part of 
$$S_{3,1,1,1,1,1}(S_{2}(\C^4)) = \oplus_{\mu \vdash 16} n_\mu S_\mu(\C^4),$$ 
where $n_\mu$ is the multiplicity of the Schur functor $S_\mu$.  
The dimension of the $SL_4$-invariant part is equal to $n_{4,4,4,4}$, 
which is the number of copies of $S_{4,4,4,4}$ within the plethysm 
$S_{3,1,1,1,1,1}(S_{2}( - ))$.  This can be checked in any number of
algebra packages.  For example,  a calculation in  Maple  is given in
\cite{hmsvcode}.

\begin{proposition}
The skew quintic $\quintic$ is the unique skew quintic relation in
$N_8$, and hence in $N'_8$.\label{uniqueskewquintic}
\end{proposition}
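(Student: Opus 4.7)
The plan is to prove uniqueness for $N_8$ first; the ``hence'' for $N'_8$ is then automatic, since any element of $\Sym^5(R_1(N'_8)) = \Sym^5(R_1(N_8))$ that vanishes in $R_5(N'_8) \subseteq R_5(N_8)$ also vanishes in $R_5(N_8)$, so every skew quintic relation of $N'_8$ gives a skew quintic relation of $N_8$ under the obvious inclusion of polynomial rings.

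For $N_8$, by Proposition~\ref{browncow} the ring $R_\bullet(N_8)$ is generated by $R_1(N_8) \cong V_{2,2,2,2}$ together with the new degree-two invariants $W_2 := R_2(N_8)/\Sym^2 R_1(N_8) \cong V_{3,1,1,1,1,1}$. The degree-five part of the polynomial ring on these generators (with $W_2$ placed in internal degree $2$) is
\[
P_5 \;=\; \Sym^5 R_1(N_8) \;\oplus\; \bigl(\Sym^3 R_1(N_8) \otimes W_2\bigr) \;\oplus\; \bigl(R_1(N_8) \otimes \Sym^2 W_2\bigr),
\]
and a skew quintic relation of $N_8$ is by definition a $\sgn$-isotypic element of $\ker(P_5 \twoheadrightarrow R_5(N_8))$. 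Its dimension equals the $\sgn$-multiplicity of $P_5$ minus that of $R_5(N_8)$. From the proof of Proposition~\ref{p:repfacts}(c), $\sgn$ has multiplicity $4$ in $\Sym^5 R_1(N_8)$ and multiplicity $3$ in $R_5(N_8)$, so it remains to show that $\sgn$ does not appear in either cross term.

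By sign-conjugation of partitions, the two required vanishings are that $V_{6,1,1}$ does not occur in $\Sym^3 V_{2,2,2,2}$ and that $V_{4,4}$ does not occur in $\Sym^2 V_{3,1,1,1,1,1}$. Each is a finite $\SS_8$-plethysm that I would verify by a direct character computation, extending the Maple code already used to analyze $R_2(N_8)/\Sym^2 R_1(N_8)$ in the preceding paragraphs. Granting both vanishings, $\dim(\ker P_5)_{\sgn} = 4 - 3 = 1$, and this line is spanned by $\quintic$ (non-zero by Proposition~\ref{p:repfacts}(c)). The principal obstacle is the plethysm computation itself; should either cross-term sign multiplicity turn out to be positive, I would refine by splitting $P_5$ and $R_5(N_8)$ under the Gale involution (trivial on $R_1(N_8)$, sign on $W_2$, with Gale-invariant part of $R_5(N_8)$ equal to $R_5(N'_8)$ by Proposition~\ref{anotherbrowncow}) and match sign multiplicities separately on the $\pm 1$ eigenspaces, reducing to a smaller character bookkeeping on each side.
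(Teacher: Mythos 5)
Your proof is correct and follows essentially the same route as the paper: both use the surjection from $P_5 = \Sym^5 R_1 \oplus (W\otimes\Sym^3 R_1)\oplus (\Sym^2 W\otimes R_1)$ onto $R_5(N_8)$ (via Proposition~\ref{browncow}), the $\sgn$-multiplicities $4$ and $3$ recorded in the proof of Proposition~\ref{p:repfacts}(c), and a character-theoretic check that $\sgn$ is absent from the two cross terms. Your rewriting of those two vanishings via sign-conjugation of partitions (as $V_{6,1,1}\not\subset\Sym^3 V_{2,2,2,2}$ and $V_{4,4}\not\subset\Sym^2 V_{3,1,1,1,1,1}$) is a harmless cosmetic variant of the same computation.
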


This is now a straightforward verification.  As observed in the proof
of Proposition~\ref{p:repfacts}(d), the sign representation appears
with multiplicity $4$ in $\Sym^5(R_1(N_8))$, and with multiplicity $3$
in $R_5(N_8)$. 
By Theorem~\ref{t:Qdual}, $\quintic \in \ker( \Sym^5(R_1(N_8)) \rightarrow
R_5(N_8))$.
Let $W$ be an $\SS_8$-equivariant lift of $R_2(N_8) /
\Sym^2(R_1(N_8))$ to $R_2(N_8)$, so $R_2(N_8) = \Sym^2(R_1(N_8))
\oplus W$ as $\SS_8$ representations, and $W \cong V_{3,1,1,1,1,1}$.
By the generation of $R_{\bullet}(N_8)$  in degrees up to two
(Proposition~\ref{browncow}), we have a {\em surjection}
$$
\Sym^5(R_1(N_8)) \oplus W \otimes \Sym^3 (R_1(N_8)) \oplus \left(
  \Sym^2 W \right)
\otimes R_1(N_8) \rightarrow R_5(N_8).$$
The result then follows by checking that the sign representation does
not appear in 
$W \otimes \Sym^3 (R_1(N_8))$ or $(  \Sym^2 W )
\otimes R_1(N_8)$, which may be verified using character theory.
(See \cite{hmsvcode} for maple code.)

\bigskip

{\tiny

Benjamin Howard:
Center for Communications Research,
Princeton, NJ 08540, bjhowa3@idaccr.org

\smallskip

John Millson:
Department of Mathematics,
University of Maryland,
College Park, MD 20742, USA,
jjm@math.umd.edu

\smallskip

Andrew Snowden:
Department of Mathematics,
MIT, Cambridge, MA 02139,
asnowden@math.mit.edu

\smallskip

Ravi Vakil:
Department of Mathematics,
Stanford University,
Stanford, CA 94305, USA,
vakil@math.stanford.edu

}

\end{document}